\numberwithin{equation}{section} \DeclareMathSizes{2}{10}{12}{13}
\newtheorem{thm}{Proposition}[section]
\newtheorem{Thm}[thm]{Theorem}
\newtheorem{rem}[thm]{Remark}
\newtheorem{cor}[thm]{Corollary}
\newtheorem{lem}[thm]{Lemma}
\newtheorem{defn}[thm]{Definition}
\newtheorem{prop}[thm]{Proposition}
\newcommand{\ash}{\mbox{\footnotesize{\rotatebox[origin=c]{180}{$!$}}}}
\title{On measurings of algebras over  operads and homology theories
}
\author{Abhishek Banerjee \footnote{Dept. of Mathematics, Indian Institute of Science, Bengaluru,  India. Email: abhishekbanerjee1313@gmail.com} \footnote{AB was partially supported by SERB Matrics fellowship MTR/2017/000112} $\qquad$ Surjeet Kour \footnote{Dept. of Mathematics, Indian Institute of Technology, 
Delhi, India. Email: koursurjeet@gmail.com }
\footnote{SK was partially supported by DST-INSPIRE fellowship RES/DST/MATH/SK/201314-044}}
\date{ }
\begin{document}

\maketitle

\medskip

\begin{abstract}
The notion of a coalgebra measuring, introduced by Sweedler, is a kind of generalized ring map between algebras. We begin by studying maps on Hochschild homology induced by coalgebra
measurings. We then introduce a notion of coalgebra measuring between Lie algebras and use it to obtain maps on Lie algebra homology. Further, these measurings between Lie algebras satisfy nice adjoint like properties with respect to universal enveloping algebras.

\smallskip
More generally, we introduce  and undertake a detailed study of the notion of coalgebra measuring between algebras over any operad $\mathcal O$. In case $\mathcal O$
is a binary and quadratic operad, we show that a measuring of $\mathcal O$-algebras leads to maps on operadic homology.  In general, for any operad $\mathcal O$, we construct universal measuring coalgebras to show that the 
category  of $\mathcal O$-algebras is enriched over coalgebras.  We  develop measuring comodules and universal measuring comodules for this theory. We also relate these to measurings of the universal enveloping algebra $U_{\mathcal O}(\mathscr A)$ of an $\mathcal O$-algebra $\mathscr A$ and the modules over it. Finally, we construct the 
Sweedler product $C\rhd \mathscr A$ of a coalgebra $C$ and an $\mathcal O$-algebra $\mathscr A$. The object $C\rhd \mathscr A$ is universal among $\mathcal O$-algebras that arise
as targets of $C$-measurings starting from $\mathscr A$. 
\end{abstract}

\medskip

{\bf \emph{MSC(2010) Subject Classification:}}  16T15, 18D50

\medskip

{\bf \emph{Keywords:}} Measuring coalgebras,  algebras over an operad

\medskip

\section{Introduction}

\medskip

Let $K$ be a field and let $A$, $B$ be $K$-algebras. A coalgebra measuring from $A$ to $B$, as defined by Sweedler \cite{Swd}, consists of a $K$-coalgebra $(C,\Delta,\epsilon)$ and a $K$-linear map $\phi: C
\longrightarrow Hom_K(A,B)$ such that
\begin{equation}\label{xeq1.1i}
\phi(c)(ab)=\sum \phi(c_1)(a)\cdot \phi(c_2)(b) \qquad \phi(c)(1_A)=\epsilon(c)\cdot 1_B \qquad a,b\in A,c\in C
\end{equation} where $\Delta(c)=\sum c_1\otimes c_2$. In particular, when $c\in C$ is a grouplike element, i.e., $\Delta(c)=c\otimes c$ and $\epsilon(c)=1$, the map  $\phi(c)\in Hom_K(A,B)$ becomes an ordinary ring 
homomorphism from $A$ to $B$. This leads to the interesting idea that a coalgebra measuring is a kind of generalized ring morphism from $A$ to $B$. In fact, Sweedler \cite{Swd} shows that there is a universal object $\mathcal M(A,B)$ among coalgebra measurings from $A$ to $B$, which gives an enrichment of the category of algebras over the category of coalgebras. Accordingly, the universal measuring coalgebra $\mathcal M(A,B)$ may also be called the ``Sweedler Hom'' (see \cite{AJ}) between algebras. This construction is also closely related to that of the Sweedler dual \cite{Swd} of an algebra. A measuring as in \eqref{xeq1.1i} is said to be cocommutative if the coalgebra $C$ is cocommutative. The cocommutative part $\mathcal M_c(A,B)$
of $\mathcal M(A,B)$ is universal among cocommutative measurings from $A$ to $B$ (see, for instance, \cite{GrM1}).

\smallskip
There is also a version of this idea for modules (see Batchelor \cite{Bat}).  Let $M$ be a left $A$-module,  $N$  a left $B$-module and $(C,\phi)$ a coalgebra measuring from $A$
to $B$. Then, a comodule measuring over $(C,\phi)$ from $M$ to $N$ consists of a left $C$-comodule $P$ and a morphism $\psi: P\longrightarrow Hom_K(M,N)$ such that
\begin{equation}\label{xeq1.2ij}
\psi(p)(am)=\sum \phi(p_0)(a)\psi(p_1)(m)\qquad a\in A, m\in M, p\in P
\end{equation} Accordingly, there is a universal measuring comodule for each coalgebra measuring $(C,\phi)$ and pair of modules $M$, $N$. 

\smallskip
In recent years, the subject of measurings has been studied extensively in a variety of categorical and enriched categorical contexts (see Hyland, L\'{o}pez Franco and Vasilakopoulou \cite{Vas1}, \cite{Vas1.5} and Vasilakopoulou \cite{Vas2}, \cite{Vas3}). Measurings have also been developed as generalized morphisms between bialgebras and Hopf algebras (see 
Grunenfelder and Mastnak \cite{GrM1}, \cite{GrM2}). In \cite{AJ}, Anel and Joyal present the Sweedler Hom as one among several related operations involving measurings of algebras. For generalizations of the Sweedler dual and how they relate to measurings, we refer the reader to Porst and Street \cite{PS}. For more on coalgebra measurings, we refer, for instance, to \cite{Brz}, \cite{LaMa}, \cite{Lebr} and \cite{Take}.

\smallskip
In this paper, we begin with the following simple question: does a coalgebra measuring, seen as a generalized ring map, induce morphisms on Hochschild homology groups? We show that
a cocommutative measuring $(C,\phi)$ of algebras leads to  morphisms of their Hochschild complexes parametrized by elements of $C$. In fact, when the algebras are commutative, this collection of maps induces a coalgebra measuring between Hochschild homology rings with respect to their shuffle product 
structures. Our first main result may be summarized as follows.

\begin{Thm} (see \ref{Prp2.4}, \ref{rTh2.3e}, \ref{Th28y}, \ref{Theorem2.5ft}) Let $A$, $B$ be $K$-algebras and let  $(C_\bullet(A), b)$ and $(C_\bullet(B), b)$ be their respective Hochschild complexes. Let $(C, \phi)$ be a cocommutative measuring from $A$ to $B$.

\smallskip
(a)  For each $c\in C$, we have a  morphism $\tilde \phi(c): (C_\bullet(A),b)\longrightarrow (C_\bullet(B),b)$
of complexes given by:
\begin{equation*} 
\tilde\phi(c)_p:C_p(A)\longrightarrow C_p(B)\qquad \tilde\phi(c)_p(a_0,...,a_p)=c\cdot (a_0,...,a_p):= \sum (\phi(c_1)(a_0),....,\phi(c_{p+1})(a_p))
\end{equation*}

\smallskip
(b) Now let $A$, $B$ be commutative. Then, we have

\begin{itemize}
\item[(1)] The induced $K$-linear map  $ \tilde \phi: C\longrightarrow Hom_K(HH_\bullet(A),HH_\bullet(B))$ is a  measuring of Hochschild homology rings with respect to the shuffle product structures.

\item[(2)] There
is a canonical morphism $\tau(A,B):\mathcal M_c(A,B)\longrightarrow
\mathcal M_c((HH_\bullet(A),sh),(HH_\bullet(B),sh))$ of cocommutative universal measuring 
coalgebras. 

\item[(3)] Let $cALG_K$ (resp. $\widetilde{cALG}_K$) denote the category whose objects are commutative algebras and whose morphism sets are
given by $\mathcal M_c(A,B)$ (resp.  $\mathcal M_c((HH_\bullet(A),sh),(HH_\bullet(B),sh))$). The morphisms $\tau(A,B)$ determine a functor $cALG_K\longrightarrow
\widetilde{cALG}_K$ of categories enriched over coalgebras. 
\end{itemize}

\end{Thm}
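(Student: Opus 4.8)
The plan is to build the theorem in four logically dependent layers, matching the four cited results. First I would establish part (a): given a cocommutative measuring $(C,\phi)$ from $A$ to $B$, I would check directly that $\tilde\phi(c)$ as defined is a chain map. Write $\Delta^{(p)}(c) = \sum c_1 \otimes \cdots \otimes c_{p+1}$ for the iterated coproduct; the key inputs are coassociativity, which makes $\tilde\phi(c)_p$ well defined, and the measuring identity \eqref{xeq1.1i} together with cocommutativity, which is needed to commute the factors $\phi(c_i)$ past one another when the Hochschild differential $b$ multiplies adjacent entries $a_i a_{i+1}$ and cyclically multiplies $a_p a_0$. Concretely, for the face maps $d_i$ with $0 \le i < p$ one uses $\phi(c_i)(a_{i-1})\phi(c_{i+1})(a_i) = \sum \phi(c_i{}_{(1)})(a_{i-1})\,\phi(c_i{}_{(2)})(\cdot)\cdots$ — no, more simply, applying $\phi$ to $a_{i-1}a_i$ splits one tensor slot into two via \eqref{xeq1.1i}, and counit kills the extra slot on $d_0$ and (after cocommutativity rotates the relevant tensor factor around the cycle) on $d_p$. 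I expect this to be the one genuinely computational step, but it is routine bookkeeping with Sweedler notation.

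Second, for part (b)(1), I would pass to homology: since each $\tilde\phi(c)$ is a chain map, it induces $HH_\bullet(\tilde\phi(c)): HH_\bullet(A) \to HH_\bullet(B)$, and assembling these gives a $K$-linear map $\tilde\phi: C \to Hom_K(HH_\bullet(A), HH_\bullet(B))$. To see this is itself a measuring with respect to the shuffle products, I would verify the two identities of \eqref{xeq1.1i} at chain level: that $\tilde\phi(c)$ applied to a shuffle product $(a_0,\dots,a_i)\times (a_{i+1},\dots,a_p)$ equals $\sum \tilde\phi(c_1)(\text{first factor})\times\tilde\phi(c_2)(\text{second factor})$, up to terms that vanish on homology, and that the unit $(1_A) \in C_0(A)$ is sent appropriately, giving $\tilde\phi(c)[1_A] = \epsilon(c)[1_B]$. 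Here commutativity of $A$ and $B$ is what makes the shuffle product a chain map in the first place, and cocommutativity of $C$ again lets the $\phi(c_j)$'s be redistributed across the two blocks of a shuffle; the compatibility of the comultiplication of $C$ with the combinatorics of shuffles is the crux, and I would reduce it to the co-version of the standard fact that shuffles are compatible with the coalgebra structure on the tensor algebra.

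Third, part (b)(2) is essentially formal once (b)(1) is in hand: by the universal property of $\mathcal M_c(-,-)$, the cocommutative measuring $\tilde\phi: C \to Hom_K(HH_\bullet(A), HH_\bullet(B))$ induced from any cocommutative measuring $(C,\phi)$ corresponds to a unique coalgebra map $C \to \mathcal M_c((HH_\bullet(A),sh),(HH_\bullet(B),sh))$; applying this to the universal measuring $\mathcal M_c(A,B)$ itself yields the canonical $\tau(A,B)$. Finally, for (b)(3), I would check functoriality: the composition law in $cALG_K$ is given by the composition maps $\mathcal M_c(B,D)\otimes\mathcal M_c(A,B)\to\mathcal M_c(A,D)$ of the Sweedler Hom enrichment, and likewise in $\widetilde{cALG}_K$; one must show $\tau(A,D)$ intertwines these, and that $\tau$ respects identities (grouplike $\mathrm{id}_A$ maps to grouplike $\mathrm{id}_{HH_\bullet(A)}$, which holds because $\tilde\phi$ for the identity measuring is just $HH_\bullet$ applied to a ring map). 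This reduces to the observation that the assignment $(C,\phi)\mapsto(C,\tilde\phi)$ is compatible with composition of measurings — i.e. $\widetilde{\psi\circ\phi} = \tilde\psi\circ\tilde\phi$ at the level of induced maps on $HH_\bullet$ — which in turn follows from functoriality of the Hochschild complex construction in part (a) applied to composites, together with cocommutativity ensuring the composite measuring is again cocommutative. The main obstacle throughout is the interaction of the Sweedler comultiplication with the cyclic/shuffle combinatorics of the Hochschild complex in steps one and two; the remaining two parts are then a matter of invoking universal properties and checking compatibilities.
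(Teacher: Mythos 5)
Your proposal follows essentially the same route as the paper: a direct Sweedler-notation check that $\tilde\phi(c)$ commutes with the face maps (cocommutativity handling the cyclic face $d_p$), a chain-level verification that $\tilde\phi(c)$ respects the shuffle product by redistributing the factors $c_j$ across the two blocks, and then purely formal appeals to the universal property of $\mathcal M_c(-,-)$ and to the fact that the composition maps are coalgebra morphisms for parts (b)(2) and (b)(3). Two harmless inaccuracies: no counit argument is needed in part (a) (the slot count matches exactly once the measuring identity splits one tensor factor into two), and the shuffle compatibility in (b)(1) holds on the nose at chain level, not merely up to boundaries.
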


\smallskip
We know that the homology of the Chevalley-Eilenberg complex of a Lie algebra behaves in a manner very similar to the Hochschild theory for algebras. As such, we introduce in Section 3 the notion of a coalgebra measuring $(C,\phi)$ between Lie algebras $(\mathfrak g,[.,.])$ and $(\mathfrak g',[.,.]')$:
\begin{equation}\label{xeq1.6r}
\phi: C\longrightarrow Hom_K(\mathfrak g,\mathfrak g') \qquad \phi(c)([a,b])=\sum [\phi(c_1)(a),\phi(c_2)(b)]' \qquad a,b\in \mathfrak g, c\in C
\end{equation} In particular, let $A$, $A'$ be algebras and $(C,\phi)$ a measuring from $A$ to $A'$. We observe that if $C$ is cocommutative, $(C,\phi)$ is also a measuring of Lie algebras, where $A$ and $A'$ are treated as Lie algebras with respect to the standard commutators. 

\smallskip We show that a cocommutative measuring $(C,\phi)$ of Lie algebras from $\mathfrak g$ to $\mathfrak g'$ induces maps on Lie algebra homology
parametrized by elements of $C$. We also describe how measurings of Lie algebras relate to measurings of their universal enveloping algebras.

\begin{Thm} (see \ref{P414.1}, \ref{rP5.2}, \ref{rP5.3}) Let $(\mathfrak g,[.,.])$, $(\mathfrak g',[.,.]')$ be Lie algebras over $K$  and let  $(CE_\bullet(\mathfrak g), d_{CE})$,  $(CE_\bullet(\mathfrak g'), d'_{CE})$ be their respective Chevalley-Eilenberg complexes. Let $(C, \phi)$ be a cocommutative measuring from $\mathfrak g$ to $\mathfrak g'$.

\smallskip
(a) For each $c\in C$, we have a  morphism $\hat \phi(c): (CE_\bullet(\mathfrak g), d_{CE})\longrightarrow(CE_\bullet(\mathfrak g'), d'_{CE})$
of complexes given by:
\begin{equation*} 
\hat\phi(c)_p:CE_p(\mathfrak g)\longrightarrow CE_p(\mathfrak g')\qquad \hat\phi(c)_p(x_1\wedge ...\wedge x_p)=c\cdot (x_1\wedge ...\wedge x_p):= \sum (\phi(c_1)x_1\wedge ...\wedge \phi(c_{p})x_p)
\end{equation*}

\smallskip
(b)  Let   $U(\mathfrak g)$, 
 $U(\mathfrak g')$ denote respectively the universal enveloping algebras of $\mathfrak g$ and $\mathfrak g'$.   Then,
 there is an induced measuring $U(\phi):C\longrightarrow Hom_K(U(\mathfrak g),U(\mathfrak g'))$ of algebras from 
 $U(\mathfrak g)$ to $U(\mathfrak g')$. 
 
 \smallskip
 (c)  Let $A$ be a $K$-algebra. For a fixed cocommutative coalgebra $C$, we have a bijection
\begin{equation*}
LMeas_C(\mathfrak g,A) \cong Meas_C(U(\mathfrak g),A)
\end{equation*} Here,  $LMeas_C(\mathfrak g,A)$ denotes the collection of   measurings of Lie algebras from $\mathfrak g$ to $A$ and $Meas_C(U(\mathfrak g),A)$ denotes the collection
of measurings of algebras from $U(\mathfrak g)$ to $A$. 
\end{Thm}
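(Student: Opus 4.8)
The three parts are essentially independent, so I would treat them in turn. For part (a) the first task is to check that the formula for $\hat\phi(c)_p$ is well defined on $CE_p(\mathfrak g)=\Lambda^p\mathfrak g$. Writing $\Delta^{(p-1)}(c)=\sum c_1\otimes\cdots\otimes c_p$ for the iterated coproduct, coassociativity together with cocommutativity ($\tau\circ\Delta=\Delta$) shows that this element of $C^{\otimes p}$ is invariant under the full $S_p$-action permuting the tensor legs, since $S_p$ is generated by adjacent transpositions and an adjacent transposition is handled after using coassociativity to isolate the two relevant legs. Consequently the multilinear map $(x_1,\dots,x_p)\mapsto\sum\phi(c_1)x_1\wedge\cdots\wedge\phi(c_p)x_p$ is alternating: pairing the summand indexed by $(\dots,c_i,c_{i+1},\dots)$ with the one indexed by $(\dots,c_{i+1},c_i,\dots)$ shows it vanishes when $x_i=x_{i+1}$, so it factors through $\Lambda^p\mathfrak g$. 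Next I would verify that $\hat\phi(c)_\bullet$ is a chain map. Applying $d'_{CE}$ after $\hat\phi(c)_p$ to a generator $x_1\wedge\cdots\wedge x_p$ yields, for each pair $i<j$, the term $(-1)^{i+j}\sum[\phi(c_i)x_i,\phi(c_j)x_j]'\wedge(\cdots)$; applying $\hat\phi(c)_{p-1}$ after $d_{CE}$ yields the same sum, except that after invoking the Lie-measuring identity \eqref{xeq1.6r} on $\phi(c_1)[x_i,x_j]$ and then coassociativity to redistribute the resulting legs, the coproduct legs land on the $x_k$'s in a permuted order. The $S_p$-invariance of $\Delta^{(p-1)}(c)$ identifies the two, pair by pair, and the sign $(-1)^{i+j}$ matches on both sides; this mirrors the Hochschild computation carried out in \ref{Prp2.4}.

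For part (b) the plan is to exploit the freeness of the tensor algebra. Composing $\phi$ with the inclusion $\mathfrak g'\hookrightarrow U(\mathfrak g')$ gives a linear map $C\to Hom_K(\mathfrak g,U(\mathfrak g'))$, and since $T(\mathfrak g)$ is the free associative algebra on $\mathfrak g$ the measuring axioms \eqref{xeq1.1i} both force and permit a unique extension to a measuring $\Phi:C\to Hom_K(T(\mathfrak g),U(\mathfrak g'))$, namely $\Phi(c)(x_1\otimes\cdots\otimes x_n)=\sum\phi(c_1)x_1\cdots\phi(c_n)x_n$ (product taken in $U(\mathfrak g')$) and $\Phi(c)(1)=\epsilon(c)1$; here coassociativity is what makes the definition consistent and multiplicative. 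It then remains to show that each $\Phi(c)$ kills the two-sided ideal $I\subset T(\mathfrak g)$ generated by the elements $x\otimes y-y\otimes x-[x,y]$. For this I would use the elementary observation that if $W\subset A$ satisfies $\phi(c')(W)=0$ for every $c'\in C$ then $\phi(c)$ annihilates the two-sided ideal $(W)$ for every $c$, because $\phi(c)(awb)=\sum\phi(c_1)(aw)\,\phi(c_2)(b)$ and $\phi(c_1)(aw)=\sum\phi((c_1)_1)(a)\,\phi((c_1)_2)(w)=0$. Thus it suffices to compute, for arbitrary $c'$, that $\Phi(c')(x\otimes y-y\otimes x-[x,y])=0$: the degree-two part contributes $\sum\phi(c'_1)x\,\phi(c'_2)y-\sum\phi(c'_1)y\,\phi(c'_2)x$, while $\Phi(c')([x,y])=\phi(c')([x,y])=\sum[\phi(c'_1)x,\phi(c'_2)y]'=\sum(\phi(c'_1)x\,\phi(c'_2)y-\phi(c'_2)y\,\phi(c'_1)x)$, and these cancel after a single use of cocommutativity. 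Hence $\Phi(c)$ descends to $U(\phi)(c)\in Hom_K(U(\mathfrak g),U(\mathfrak g'))$, and the measuring identities pass along the surjection $T(\mathfrak g)\twoheadrightarrow U(\mathfrak g)$.

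For part (c), with $A$ regarded as a Lie algebra under its commutator, restriction along $\mathfrak g\hookrightarrow U(\mathfrak g)$ sends a measuring of algebras $(C,\psi)$ from $U(\mathfrak g)$ to $A$ to a linear map $C\to Hom_K(\mathfrak g,A)$; running the computation of the previous paragraph in reverse — using $[x,y]=xy-yx$ in $U(\mathfrak g)$, expanding $\psi(c)$ multiplicatively, and collapsing with cocommutativity — shows this restriction satisfies the Lie-measuring identity, so we obtain a map $Meas_C(U(\mathfrak g),A)\to LMeas_C(\mathfrak g,A)$. The construction of part (b), applied with the already-associative algebra $A$ in place of $U(\mathfrak g')$, furnishes a map in the other direction. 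These are mutually inverse: extending a Lie measuring and then restricting returns it because the extension is the identity in degree one; and conversely, since $U(\mathfrak g)$ is generated as an algebra by $\mathfrak g$ and $1$, an induction on word length using $\psi(c)(x_1\cdots x_n)=\sum\psi(c_1)(x_1)\,\psi(c_2)(x_2\cdots x_n)$ shows any measuring out of $U(\mathfrak g)$ is determined by its restriction to $\mathfrak g$, so re-extending recovers the original $\psi$.

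I expect the real obstacle to be the sign-and-reindexing bookkeeping in part (a): the Lie-measuring identity \eqref{xeq1.6r} only distributes the coproduct across two factors at a time, so matching the result with the fully distributed expression $\sum\phi(c_1)x_1\wedge\cdots\wedge\phi(c_p)x_p$ genuinely requires the full $S_p$-invariance of the iterated coproduct — cocommutativity, not merely coassociativity — and one must track the signs produced by reordering wedge factors through this permutation. Parts (b) and (c), by contrast, are one computation packaged two ways; the only subtle point there is that a measuring is not an algebra homomorphism, so the step ``vanishing on the generators of $I$ $\Rightarrow$ vanishing on $I$'' must be justified by the ideal lemma above rather than assumed.
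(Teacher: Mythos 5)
Your proposal is correct and follows essentially the same route as the paper: well-definedness on $\wedge^p\mathfrak g$ plus the chain-map check via cocommutative reindexing of the iterated coproduct for (a) (as in \ref{P414.1}), extension to the tensor algebra, killing the ideal generated by $x\otimes y-y\otimes x-[x,y]$ via the two-sided-ideal argument, and descent to $U(\mathfrak g)$ for (b) (as in Lemma \ref{rL5.1} and \ref{rP5.2}), and the restriction/extension bijection for (c) (as in \ref{rP5.3}). Your explicit ideal lemma and the induction showing a measuring out of $U(\mathfrak g)$ is determined on $\mathfrak g$ only spell out steps the paper treats briefly, so no substantive difference.
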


\smallskip
The expressions in \eqref{xeq1.1i} and \eqref{xeq1.6r} suggest that measurings should be defined more generally for any ``algebraic structure with an operation.''
By considering iterated coproducts 
$\Delta^n:C\longrightarrow C\otimes C \otimes ... \otimes C
$, we observe that the formalism of \eqref{xeq1.1i} and \eqref{xeq1.6r}  may be extended from multiplication or from brackets  to any ``$(n+1)$-ary operation'' (compare \cite[$\S$ 2.6]{Fresse}). We make this precise by introducing
coalgebra measurings between algebras over  a given operad $\mathcal O=\{\mathcal O(n)\}_{n\geq 0}$ in the category of $K$-vector spaces. More explicitly, if $\mathscr A$, $\mathscr B$ are algebras over $\mathcal O$, a coalgebra measuring from $\mathscr A$ to $\mathscr B$ is given by (see Definition \ref{Defmesopalg})
\begin{equation*}
\phi: C\longrightarrow Hom_K(\mathscr A,\mathscr B)
 \qquad \phi(c)\big(\theta(a_1 \otimes \cdots \otimes a_n)\big) = \sum\theta \big(\phi(c_1)(a_1) \otimes \cdots \otimes \phi(c_n)(a_n)\big) 
\end{equation*} for any  $c\in C$, $\theta\in \mathcal O(n)$, $n\geq 0$ and $a_i\in \mathscr A$. From this point onwards, we undertake a detailed study of coalgebra measurings
between algebras over $\mathcal O$. 

\begin{Thm} (see \ref{htm5.4}, \ref{Thm2.4})  Let $\mathcal{O}=\{\mathcal{O}(n)\}_{n\geq 0}$ be an operad and let $\mathscr A$ and $\mathscr B$ be $\mathcal{O}$-algebras. Then, there exists a $K$-coalgebra $\mathcal M=\mathcal M(\mathscr A,\mathscr B)$ and a measuring $\Phi : \mathcal M \longrightarrow Hom_K(\mathscr A, \mathscr B)$ satisfying the following property: given any measuring $(C,\phi)$ from $\mathscr A$ to $\mathscr B$, there exists a unique morphism $\psi: C\longrightarrow \mathcal M$ of coalgebras making the following diagram commutative:
\begin{equation*}
\begin{tikzcd}[column sep=1.5em]
\mathcal{M}  \arrow[rr, , ""{name=U}, , "\Phi"]{} &&  Hom_K(\mathscr A, \mathscr B) \\
 & C \arrow[from=U, phantom, "\scalebox{1.5}{$\circlearrowright$}" description] \arrow{ur}{\phi} \arrow{ul}{\psi}
\end{tikzcd}
\end{equation*}
Further, taking $\mathcal M(\mathscr A,\mathscr B)$ to be the ``Hom-objects,'' the category of $\mathcal{O}$-algebras is enriched over the category of $K$-coalgebras. 

\end{Thm}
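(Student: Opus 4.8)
**Proof proposal for the existence of the universal measuring coalgebra $\mathcal M(\mathscr A,\mathscr B)$ and the coalgebra-enrichment.**

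The plan is to build $\mathcal M(\mathscr A,\mathscr B)$ by the standard "free object modulo relations, then take the largest subcoalgebra" device familiar from Sweedler's construction of the universal measuring coalgebra in the associative case, adapted to the operad. First I would consider the cofree coalgebra $\mathfrak C$ on the vector space $Hom_K(\mathscr A,\mathscr B)$, equipped with its universal linear map $\pi:\mathfrak C\longrightarrow Hom_K(\mathscr A,\mathscr B)$; any coalgebra $C$ with a linear map $f:C\to Hom_K(\mathscr A,\mathscr B)$ factors uniquely through $\mathfrak C$ as a coalgebra map. The subtlety is that such a factorization need not respect the measuring identities. So the second step is to cut $\mathfrak C$ down. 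For each $n\geq 0$, each $\theta\in\mathcal O(n)$ and each $a_1,\dots,a_n\in\mathscr A$, the measuring condition
\[
\phi(c)\big(\theta(a_1\otimes\cdots\otimes a_n)\big)=\sum\theta\big(\phi(c_1)(a_1)\otimes\cdots\otimes\phi(c_n)(a_n)\big)
\]
says that a certain linear map $C\to\mathscr B$, built functorially from the coalgebra structure of $C$ and the map $\phi$, vanishes. Pulling these conditions back along $\pi$ gives, for each such datum $(n,\theta,a_1,\dots,a_n)$, a linear map $\mathfrak C\to\mathscr B$; let $W\subseteq\mathfrak C$ be the intersection of all their kernels. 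Then $\mathcal M(\mathscr A,\mathscr B)$ is defined to be the sum of all subcoalgebras of $\mathfrak C$ contained in $W$ — equivalently, the largest subcoalgebra of $\mathfrak C$ on which all the measuring relations hold. The restriction of $\pi$ gives $\Phi:\mathcal M(\mathscr A,\mathscr B)\to Hom_K(\mathscr A,\mathscr B)$, and one checks $(\mathcal M,\Phi)$ is a measuring.

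For the universal property, given any measuring $(C,\phi)$ from $\mathscr A$ to $\mathscr B$, cofreeness yields a unique coalgebra map $\psi:C\to\mathfrak C$ with $\pi\circ\psi=\phi$; the key verification is that $\psi(C)\subseteq W$. This follows because the measuring identities for $(C,\phi)$ translate, after pushing forward along $\psi$ and using that $\psi$ is a coalgebra morphism (so it intertwines the iterated coproducts appearing on the right-hand side), exactly into the statement that $\psi(c)$ lies in each of the kernels defining $W$. Since $\psi(C)$ is a subcoalgebra of $\mathfrak C$ contained in $W$, it is contained in $\mathcal M(\mathscr A,\mathscr B)$ by maximality, so $\psi$ corestricts to a coalgebra map $C\to\mathcal M$ making the triangle commute; uniqueness is inherited from the uniqueness in the cofree property. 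I would prove the coalgebra identities for $\Phi$ and the inclusion $\psi(C)\subseteq W$ by a direct diagram chase, using the coassociativity bookkeeping for $\Delta^n$ (as in the discussion preceding the statement).

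Finally, for the enrichment: I would define, for $\mathcal O$-algebras $\mathscr A,\mathscr B,\mathscr D$, the composition map $\mathcal M(\mathscr B,\mathscr D)\otimes\mathcal M(\mathscr A,\mathscr B)\longrightarrow\mathcal M(\mathscr A,\mathscr D)$ as follows. The composite
\[
\mathcal M(\mathscr B,\mathscr D)\otimes\mathcal M(\mathscr A,\mathscr B)\xrightarrow{\ \Phi\otimes\Phi\ }Hom_K(\mathscr B,\mathscr D)\otimes Hom_K(\mathscr A,\mathscr B)\longrightarrow Hom_K(\mathscr A,\mathscr D)
\]
is readily checked to be a measuring from $\mathscr A$ to $\mathscr D$ (the measuring relation follows by combining those for the two factors, using that the coproduct on the tensor product of coalgebras is the "shuffled" one and that $\mathcal O$ is an operad, so the two operadic substitutions are compatible), hence by the universal property it factors uniquely through $\mathcal M(\mathscr A,\mathscr D)$, giving the desired coalgebra map. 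The unit $K\to\mathcal M(\mathscr A,\mathscr A)$ is the coalgebra map picking out the grouplike element corresponding to the identity $\mathrm{id}_{\mathscr A}$, which is itself a (trivial) measuring. Associativity of composition and the unit axioms then follow from the uniqueness clause in the universal property: both sides of each coherence diagram are coalgebra maps into $\mathcal M$ that become equal after composing with $\Phi$, hence are equal. The main obstacle is the first part — verifying carefully that the largest subcoalgebra contained in $W$ exists (sums of subcoalgebras are subcoalgebras, so $\mathcal M$ is well defined) and that the measuring relations genuinely cut out a condition that is preserved under passing to subcoalgebras and is respected by coalgebra morphisms; once that is in place, the enrichment is a formal consequence of universality.
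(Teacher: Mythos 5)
Your construction is essentially the paper's: the paper likewise takes the cofree coalgebra $\mathfrak C$ on $Hom_K(\mathscr A,\mathscr B)$ with its canonical map $\pi$ and defines $\mathcal M(\mathscr A,\mathscr B)$ as the sum of all subcoalgebras $D\subseteq \mathfrak C$ on which $\pi$ restricts to a measuring — which coincides with your largest subcoalgebra contained in $W$ — and obtains $\psi$ by corestricting the coalgebra map furnished by cofreeness, with uniqueness inherited from the adjunction. The enrichment is also proved there exactly as you propose: the composite of two measurings through $Hom$-composition is checked to be a measuring on the tensor product coalgebra and then factored through the universal property, with the unit $K\longrightarrow \mathcal M(\mathscr A,\mathscr A)$ induced by $t\mapsto t\cdot \mathrm{id}_{\mathscr A}$.
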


If $\mathcal O$ is a binary quadratic operad and $\mathscr A$ is an algebra over $\mathcal O$, we know (see \cite[$\S$ 12.1.2]{LoVa}) that its operadic homology $HH_\bullet^{\mathcal O}(\mathscr A)$
is computed by means of a Koszul complex $C_\bullet^{\mathcal O}(\mathscr A)$.  For the operad  of associative algebras, this recovers the (non-unital) Hochschild homology. For the operad  of Lie algebras, this recovers the Chevalley-Eilenberg homology (see \cite[$\S$ 2.2.3]{Val}). We prove the following result.

\begin{Thm} (see \ref{hTh5.9u}) Let $\mathcal O$ be a binary and quadratic operad.  Let $\mathscr A$, $\mathscr B$ be $\mathcal O$-algebras and let  $(C_\bullet^{\mathcal O}(\mathscr A), d)$ and $(C_\bullet^{\mathcal O}(\mathscr B), d)$ be their respective Koszul complexes. Let $(C, \phi)$ be a cocommutative measuring from $\mathscr A$ to $\mathscr B$.
Then, 
  for each $c\in C$, we have a  morphism $\tilde \phi(c):(C_\bullet^{\mathcal O}(\mathscr A), d)\longrightarrow(C_\bullet^{\mathcal O}(\mathscr B), d)$
of complexes given by:
\begin{equation*} 
\begin{array}{c}
\tilde\phi(c)_n:C_n^{\mathcal O}(\mathscr A)=\mathcal O^{\ash}(n+1)\otimes_{S_{n+1}}\mathscr A^{\otimes n+1}\longrightarrow C_n^{\mathcal O}(\mathscr B)=\mathcal O^{\ash}(n+1)\otimes_{S_{n+1}}\mathscr B^{\otimes n+1}\\  \tilde\phi(c)_n(\delta \otimes (a_1,...,a_{n+1})):= \delta \otimes (\sum  (\phi(c_1)(a_1),....,\phi(c_{n+1})(a_{n+1}))) \\
\end{array}
\end{equation*} In particular, we have an induced morphism $\tilde \phi(c): HH_\bullet^{\mathcal O}(\mathscr A)\longrightarrow HH_\bullet^{\mathcal O}(\mathscr B)$ on operadic
homologies for each $c\in C$. 

\end{Thm}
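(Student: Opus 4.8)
The plan is to establish the statement by working directly with the explicit Koszul complex and its twisted differential. Recall from \cite[$\S$ 12]{LoVa} that $C_\bullet^{\mathcal O}(\mathscr A)=\mathcal O^{\ash}\circ\mathscr A$, with homological degree coming from the (shifted) weight grading of the Koszul dual cooperad, and that its differential $d$ is the twisted differential $d_\kappa$ built from the Koszul twisting morphism $\kappa\colon\mathcal O^{\ash}\to\mathcal O$ and the $\mathcal O$-algebra structure map $\gamma_{\mathscr A}\colon\mathcal O\circ\mathscr A\to\mathscr A$. Since $\mathcal O$ is binary and quadratic, $\kappa$ is concentrated in weight $1$: it is carried by the canonical identification $\mathcal O^{\ash}(2)\cong\mathcal O(2)$ and vanishes in all higher arities. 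Hence the only operations of $\mathscr A$ appearing in $d$ are binary operations $\theta\in\mathcal O(2)$, so only the case $n=2$ of the measuring identity will be needed. The argument is the operadic counterpart of the ones carried out in \ref{Prp2.4} and \ref{P414.1} for associative algebras and for Lie algebras, and it specializes back to those for $\mathcal O=As$ and $\mathcal O=Lie$.

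First I would verify that the assignment $\delta\otimes(a_1,\dots,a_{n+1})\mapsto\delta\otimes\sum(\phi(c_1)(a_1),\dots,\phi(c_{n+1})(a_{n+1}))$ descends to a well-defined map on the coinvariants $\mathcal O^{\ash}(n+1)\otimes_{S_{n+1}}\mathscr A^{\otimes n+1}$. The group $S_{n+1}$ acts by its standard action on $\mathcal O^{\ash}(n+1)$ (with the usual Koszul signs) and by permutation of factors on $\mathscr A^{\otimes n+1}$; since each $\phi(c_i)$ is a degree-$0$ $K$-linear map, no extra sign appears when a permutation is transported past the $\phi(c_i)$. Thus well-definedness reduces to the invariance of the iterated coproduct $\Delta^{n}(c)=\sum c_1\otimes\cdots\otimes c_{n+1}$ under the permutation action of $S_{n+1}$, which is exactly the cocommutativity of $C$ (coassociativity being what makes $\Delta^n$ unambiguous in the first place). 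This is the one place the cocommutativity hypothesis is genuinely used.

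The heart of the proof is the chain-map identity $d\circ\tilde\phi(c)=\tilde\phi(c)\circ d$. Write $d=\sum_i d_i$ as a sum over internal slots: on $\delta\otimes(a_1,\dots,a_{n+1})$, the term $d_i$ applies the infinitesimal cooperad decomposition $\Delta_{(1)}(\delta)=\sum\delta'\otimes\delta''$ with a binary cooperation $\delta''\in\mathcal O^{\ash}(2)$ grafted at slot $i$, replaces $\delta''$ by $\theta:=\kappa(\delta'')\in\mathcal O(2)$, and contracts the pair $(a_i,a_{i+1})$, yielding $\pm\,\delta'\otimes(a_1,\dots,a_{i-1},\theta(a_i\otimes a_{i+1}),a_{i+2},\dots,a_{n+1})\in C_{n-1}^{\mathcal O}(\mathscr A)$. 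Applying $\tilde\phi(c)$ to this output, now governed by $\Delta^{n-1}(c)=\sum c_1\otimes\cdots\otimes c_n$, puts $\phi(c_i)$ onto slot $i$; the binary measuring identity gives $\phi(c_i)\big(\theta(a_i\otimes a_{i+1})\big)=\sum\theta\big(\phi((c_i)_1)(a_i)\otimes\phi((c_i)_2)(a_{i+1})\big)$, and coassociativity in the form $(\mathrm{id}^{\otimes(i-1)}\otimes\Delta\otimes\mathrm{id}^{\otimes(n-i)})\circ\Delta^{n-1}=\Delta^{n}$ rewrites the indexing sum as $\sum c_1\otimes\cdots\otimes c_{n+1}$. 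Comparing, this is precisely the slot-$i$ term of $d\big(\tilde\phi(c)(\delta\otimes(a_1,\dots,a_{n+1}))\big)$, with matching signs since $\tilde\phi(c)$ is a degree-$0$ operation affecting only the $\mathscr A$-factors. Summing over $i$ yields the chain-map identity, and passing to homology gives the induced morphism $\tilde\phi(c)\colon HH_\bullet^{\mathcal O}(\mathscr A)\longrightarrow HH_\bullet^{\mathcal O}(\mathscr B)$.

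I expect the main obstacle to be purely organizational: aligning the slot at which the cooperadic infinitesimal decomposition inserts a binary cooperation with the slot at which coassociativity splits $\Delta^{n-1}$ into $\Delta^{n}$, and carrying the $S_{n+1}$-equivariance through so that every step is legitimate on coinvariants. In particular, terms of $d$ that contract a pair which is not literally adjacent --- as happens once one picks tree-monomial representatives in $\mathcal O^{\ash}(n+1)$ and reorders by an element of $S_{n+1}$ --- have to be handled by using the symmetry of $\Delta^{n}(c)$ to permute its tensor factors so that the correct $c_i$ land on the two contracted arguments. A clean way to organize the verification is to fix representative tree monomials, check the identities there, and extend by $S_{n+1}$-equivariance.
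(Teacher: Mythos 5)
Your proposal is correct and follows essentially the same route as the paper's proof of Theorem \ref{hTh5.9u}: well-definedness on the $S_{n+1}$-coinvariants via cocommutativity of $C$, then the chain-map identity by applying the binary measuring relation at the contracted slot, using coassociativity to pass from $\Delta^{n-1}(c)$ to $\Delta^{n}(c)$, and invoking cocommutativity to absorb the permutation $\sigma$ coming from the infinitesimal decomposition $\Delta_{(1)}(\delta)$. The only difference is presentational: you decompose $d$ slot by slot and emphasize that $\kappa$ is concentrated in arity $2$, whereas the paper carries out the two sides of the computation in one pass using the explicit formula \eqref{kosz2}.
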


\smallskip
If $\mathscr A$ is an algebra over an operad $\mathcal O$, we  recall 
(see \cite[$\S$ 1.6]{GK}) that its universal enveloping algebra $U_{\mathcal O}(\mathscr A)$ is an ordinary associative algebra such that
modules over $\mathscr A$ correspond to ordinary left modules over $U_{\mathcal O}(\mathscr A)$.  This algebra is generated by symbols $Z(\theta; a_1,...,a_{m})$, $\theta\in \mathcal O(m+1)$, 
$a_1$,...,$a_{m}\in \mathscr A$, $m\geq 0$ subject to certain relations which we recall in \eqref{gkrel}. We show that a cocommutative measuring of $\mathcal O$-algebras induces
a measuring of their universal enveloping algebras.

\begin{Thm} (see \ref{rP6.9}) Let $\mathscr A$, $\mathscr B$ be $\mathcal O$-algebras and let $(C,\phi)$ be a  cocommutative  measuring from $\mathscr A$ to $\mathscr B$. Then, the  induced
linear map
\begin{equation*}
\begin{array}{c}
U_{\mathcal O}(\phi):C\longrightarrow Hom_K(U_{\mathcal O}(\mathscr A),U_{\mathcal O}(\mathscr B)) \\ U_{\mathcal O}(\phi)(c)(Z(\theta; a_1,...,a_{m})):=Z(\theta,\phi(c_1)(a_1),...,\phi(c_{m})(a_m))=Z(\theta;c_1\cdot a_1,...,c_m\cdot a_m) \\
\end{array}
\end{equation*} is a measuring of algebras from $U_{\mathcal O}(\mathscr A)$ to $U_{\mathcal O}(\mathscr B)$.
\end{Thm}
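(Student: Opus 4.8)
The plan is to work directly from the presentation \eqref{gkrel} of $U_{\mathcal O}(\mathscr A)$, which realizes it as the associative algebra with generators $Z(\theta;a_1,\dots,a_m)$ ($\theta\in\mathcal O(m+1)$, $a_i\in\mathscr A$, $m\ge 0$), $K$-linear in $\theta$ and in each $a_i$, coinvariant for the simultaneous $S_m$-action on $\theta$ and on $(a_1,\dots,a_m)$, compatible with operadic substitution $Z(\theta;\dots,\gamma(b_1,\dots,b_k),\dots)=Z(\theta\circ_i\gamma;\dots,b_1,\dots,b_k,\dots)$ for $\gamma\in\mathcal O(k)$ placed in the $i$-th argument, with multiplication $Z(\theta;a_1,\dots,a_m)\cdot Z(\omega;b_1,\dots,b_n)=Z(\theta\circ_{m+1}\omega;a_1,\dots,a_m,b_1,\dots,b_n)$ and unit $1=Z(1_{\mathcal O})$, $1_{\mathcal O}\in\mathcal O(1)$. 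Hence the statement reduces to two points: (i) for fixed $c\in C$ the assignment $Z(\theta;a_1,\dots,a_m)\mapsto\sum Z(\theta;\phi(c_1)(a_1),\dots,\phi(c_m)(a_m))$, the sum running over $\Delta^{m-1}(c)=\sum c_1\otimes\cdots\otimes c_m$, respects every relation of \eqref{gkrel}, hence defines $U_{\mathcal O}(\phi)(c)\in Hom_K(U_{\mathcal O}(\mathscr A),U_{\mathcal O}(\mathscr B))$, with $c\mapsto U_{\mathcal O}(\phi)(c)$ clearly $K$-linear; (ii) $U_{\mathcal O}(\phi)$ satisfies the two identities \eqref{xeq1.1i} defining a measuring of associative algebras.

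For (i), linearity in $\theta$ and in the $a_i$ is immediate. For the $S_m$-coinvariance relation, applying the formula to $Z(\theta\cdot\sigma;a_1,\dots,a_m)$ and using the same relation in $U_{\mathcal O}(\mathscr B)$ reduces the claim to the symmetry $\sum c_1\otimes\cdots\otimes c_m=\sum c_{\sigma^{-1}(1)}\otimes\cdots\otimes c_{\sigma^{-1}(m)}$ of the iterated coproduct, which holds precisely because $C$ is cocommutative (and coassociative) --- this is where the cocommutativity hypothesis enters. The substitution relation is the crux: one applies the formula to $Z(\theta;a_1,\dots,\gamma(b_1,\dots,b_k),\dots,a_m)$, rewrites the $i$-th entry $\phi(c_i)\big(\gamma(b_1\otimes\cdots\otimes b_k)\big)=\sum\gamma\big(\phi((c_i)_1)(b_1)\otimes\cdots\otimes\phi((c_i)_k)(b_k)\big)$ using the defining identity of an $\mathcal O$-algebra measuring (Definition \ref{Defmesopalg}), and then applies the substitution relation in $U_{\mathcal O}(\mathscr B)$; coassociativity of $C$ identifies the resulting sum over $(\mathrm{id}^{\otimes i-1}\otimes\Delta^{k-1}\otimes\mathrm{id}^{\otimes m-i})\Delta^{m-1}(c)$ with the sum over $\Delta^{m+k-2}(c)$ appearing in the image of $Z(\theta\circ_i\gamma;a_1,\dots,b_1,\dots,b_k,\dots,a_m)$. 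Compatibility with the multiplication relation is checked in the same spirit, splitting $\Delta^{m+n-1}$ via coassociativity.

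For (ii) it suffices, once the unit axiom is known, to verify multiplicativity on the generators $x=Z(\theta;a_1,\dots,a_m)$, $y=Z(\omega;b_1,\dots,b_n)$. Since $xy=Z(\theta\circ_{m+1}\omega;a_1,\dots,a_m,b_1,\dots,b_n)$, the formula (with $\Delta^{m+n-1}(c)$) followed by the multiplication relation in $U_{\mathcal O}(\mathscr B)$ and the re-splitting $\Delta^{m+n-1}=(\Delta^{m-1}\otimes\Delta^{n-1})\Delta$ yields $U_{\mathcal O}(\phi)(c)(xy)=\sum U_{\mathcal O}(\phi)(c_1)(x)\cdot U_{\mathcal O}(\phi)(c_2)(y)$. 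The unit axiom is the $m=0$ case: $1_{U_{\mathcal O}(\mathscr A)}=Z(1_{\mathcal O})$ and, there being no $a_i$'s, the formula reduces to $\epsilon(c)\,Z(1_{\mathcal O})$ via the counit, so $U_{\mathcal O}(\phi)(c)(1)=\epsilon(c)\,1$. I expect the main obstacle to be the coproduct bookkeeping in (i) for the substitution relation, since this is the one step genuinely using the $\mathcal O$-algebra structure and it is here that the combinatorics of \eqref{gkrel} and of the iterated coproduct must be matched; the cocommutativity and coassociativity of $C$ are exactly what make this work.

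Finally, it is worth recording the cleaner organization that simultaneously gives naturality in $\mathscr A$ and $\mathscr B$: when $C$ is cocommutative, $Hom_K(C,\mathscr B)$ carries a pointwise $\mathcal O$-algebra structure and a $C$-measuring from $\mathscr A$ to $\mathscr B$ is the same as an $\mathcal O$-algebra morphism $\phi^{\vee}:\mathscr A\to Hom_K(C,\mathscr B)$. Applying the functor $U_{\mathcal O}$ and composing with the canonical algebra morphism $U_{\mathcal O}\big(Hom_K(C,\mathscr B)\big)\to Hom_K\big(C,U_{\mathcal O}(\mathscr B)\big)$ (target the convolution algebra) produces an algebra map $U_{\mathcal O}(\mathscr A)\to Hom_K(C,U_{\mathcal O}(\mathscr B))$, whose transpose is a measuring of algebras from $U_{\mathcal O}(\mathscr A)$ to $U_{\mathcal O}(\mathscr B)$ that unwinds to the stated formula. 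In this presentation the only thing to check is that the canonical map $U_{\mathcal O}(Hom_K(C,\mathscr B))\to Hom_K(C,U_{\mathcal O}(\mathscr B))$ is a well-defined algebra homomorphism, which again comes down to \eqref{gkrel} together with coassociativity of $C$.
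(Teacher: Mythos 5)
Your proof follows essentially the same route as the paper: define $U_{\mathcal O}(\phi)(c)$ on the generators, check that it respects the two relations in \eqref{gkrel} (cocommutativity of $C$ for the symmetric-group relation, the $\mathcal O$-algebra measuring identity for the $\circ_i$-relation), and then verify multiplicativity against \eqref{mgkrel} by re-splitting the iterated coproduct via coassociativity — this is precisely the content of Lemma \ref{Lema6.9} and Theorem \ref{rP6.9}. Your explicit handling of the unit axiom (the $m=0$ case) and the closing convolution-algebra reformulation are correct additions beyond what the paper records, but they do not change the underlying argument.
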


In Section 7, we introduce comodule measurings between modules over $\mathcal O$-algebras. If $\mathscr M$ is an $\mathscr A$-module and $\mathscr N$ is a $\mathscr B$-module,
a measuring $(P,\psi)$ from $\mathscr M$ to $\mathscr N$ consists of a left $C$-comodule $P$ and a $K$-linear map $ \psi: P \longrightarrow Hom_K(\mathscr M,\mathscr  N)$ satisfying  (see Definition \ref{Defmescomod})
\begin{equation*}
\psi(p)\big(\theta_{\mathscr  M} (a_1 \otimes \cdots \otimes a_{n-1}\otimes m)\big)= \sum \theta_{\mathscr  N }\big( \phi(p_0)(a_1) \otimes \cdots \otimes \phi(p_{n-2})(a_{n-1}) \otimes \psi(p_{n-1})(m)\big)
\end{equation*} for any $p\in P$,  $a_i\in \mathscr A$, $\theta\in \mathcal O(n)$, $n\geq 1$ and $m\in M$. We then prove the following two main results.

\begin{Thm} (see \ref{Th3.2}, \ref{P7.6or}, \ref{T7.7}) Let $\mathscr A$ and $\mathscr B$ be $\mathcal{O}$-algebras. Let $\mathscr M$ be an $\mathscr A$-module, $\mathscr N$ be a $\mathscr B$-module and let $(C, \phi)$ be a measuring from $\mathscr A$ to $\mathscr  B$. 

\smallskip
(a) There
exists  a measuring $(C, \phi)$-comodule $(\mathcal Q_C=\mathcal Q_C(\mathscr M,\mathscr  N), \Psi: \mathcal Q_C \longrightarrow Hom_K(\mathscr M, \mathscr N))$ satisfying the following property: given any measuring  $(C, \phi)$-comodule $(P,\psi)$ from $\mathscr M$ to $\mathscr N$, there exists a unique morphism $\xi: P \longrightarrow \mathcal Q_C$ of $C$-comodules making the following diagram commutative.
\begin{equation*}
\begin{tikzcd}[column sep=1.5em]
\mathcal{Q}_C   \arrow[rr, , ""{name=U}, , "\Psi"]{} &&  Hom_K(\mathscr M, \mathscr N) \\
 & P \arrow[from=U, phantom, "\scalebox{1.5}{$\circlearrowright$}" description] \arrow{ur}{\psi} \arrow{ul}{\xi }
\end{tikzcd}
\end{equation*}

\smallskip
(b) Let $U_{\mathscr A}(\mathscr M)$ be the left module over $U_{\mathcal O}(\mathscr A)$ corresponding to
$\mathscr M$.  Let $U_{\mathscr B}(\mathscr N)$ be the left module over $U_{\mathcal O}(\mathscr B)$ corresponding to
$\mathscr N$.  Let $P$ be a left $C$-comodule and $\psi:P\longrightarrow Hom_K(\mathscr  M,\mathscr  N)$ a morphism.  Then,
$(P,\psi)$ is a measuring comodule over $(C,\phi:C\longrightarrow
Hom_K(\mathscr  A,\mathscr  B))$ if and only if $(P,\psi)$ is  a measuring comodule over $(C,U_{\mathcal O}(\phi):C\longrightarrow Hom_K(U_{\mathcal O}(\mathscr  A),U_{\mathcal O}(\mathscr  B)))$. In particular,  we have an isomorphism of $C$-comodules
\begin{equation*}
\mathcal Q_C(\mathscr M,\mathscr  N)=Q_C(U_{\mathscr A}(\mathscr M),U_{\mathscr B}(\mathscr N))
\end{equation*} 

\end{Thm}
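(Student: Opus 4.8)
The plan is to prove the three parts of the theorem by building on the universal constructions already available and the relation between $\mathcal{O}$-algebra modules and $U_{\mathcal O}$-modules. For part (b), the main input is the comparison between the defining condition for a measuring comodule over $(C,\phi\colon C\to Hom_K(\mathscr A,\mathscr B))$ and the one over $(C,U_{\mathcal O}(\phi))$. Recall from Definition~\ref{Defmescomod} that $(P,\psi)$ is a measuring $(C,\phi)$-comodule iff for all $\theta\in\mathcal O(n)$, $a_i\in\mathscr A$, $m\in\mathscr M$ and $p\in P$ one has
\begin{equation*}
\psi(p)\big(\theta_{\mathscr M}(a_1\otimes\cdots\otimes a_{n-1}\otimes m)\big)=\sum\theta_{\mathscr N}\big(\phi(p_0)(a_1)\otimes\cdots\otimes\phi(p_{n-2})(a_{n-1})\otimes\psi(p_{n-1})(m)\big),
\end{equation*}
while a measuring comodule over $(C,U_{\mathcal O}(\phi))$ is one in which $\psi$ intertwines the $U_{\mathcal O}(\mathscr A)$-action on $U_{\mathscr A}(\mathscr M)=\mathscr M$ and the $U_{\mathcal O}(\mathscr B)$-action on $U_{\mathscr B}(\mathscr N)=\mathscr N$ in the sense of Definition~\ref{Defmescomod} for ordinary associative algebras, i.e.\ $\psi(p)(Z(\theta;a_1,\dots,a_{n-1})\cdot m)=\sum U_{\mathcal O}(\phi)(p_0)(Z(\theta;a_1,\dots,a_{n-1}))\cdot\psi(p_1)(m)$ summed over the $C$-coaction. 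First I would unwind both definitions using the explicit description $Z(\theta;a_1,\dots,a_{n-1})\cdot m=\theta_{\mathscr M}(a_1\otimes\cdots\otimes a_{n-1}\otimes m)$ of the $U_{\mathcal O}(\mathscr A)$-module structure on $\mathscr M$ (from \cite[$\S$ 1.6]{GK}), together with the formula for $U_{\mathcal O}(\phi)$ from \ref{rP6.9}. After substituting, the two conditions become literally the same identity, with the iterated coaction $p\mapsto\sum p_0\otimes\cdots\otimes p_{n-1}$ on the $\mathscr O$-algebra side matching the single coaction composed with the comultiplications of $C$ in $U_{\mathcal O}(\phi)$ on the associative side; this matching is exactly the coassociativity bookkeeping, and it is the one routine but slightly delicate point to get the indices right. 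One subtlety to note is that the generators $Z(\theta;a_1,\dots,a_{n-1})$ only generate $U_{\mathcal O}(\mathscr A)$ as an algebra, so to pass from "the intertwining identity holds on generators" to "it holds on all of $U_{\mathcal O}(\mathscr A)$" I would check that the set of elements $u\in U_{\mathcal O}(\mathscr A)$ satisfying the intertwining relation (with $\psi$, the coaction, and $U_{\mathcal O}(\phi)$) is closed under products; this is the standard argument that a measuring condition need only be verified on algebra generators, and it uses multiplicativity of $U_{\mathcal O}(\phi)$ established in \ref{rP6.9}.

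Granting the equivalence of the two notions of measuring comodule, part (a) follows from the universal construction for comodule measurings over ordinary associative algebras, which is available as the object $Q_C(-,-)$ referenced in the statement and produced exactly as in Batchelor's module version \cite{Bat} adapted to our setting. Concretely, I would either cite the associative-case construction of $Q_C$ and transport it, or repeat the construction directly: form the cofree $C$-comodule on $Hom_K(\mathscr M,\mathscr N)$, take the largest subcomodule on which the candidate map into $Hom_K(\mathscr M,\mathscr N)$ satisfies the measuring identity, and verify the universal property by the usual argument (any $(P,\psi)$ factors uniquely through the cofree comodule, and the measuring condition on $(P,\psi)$ forces the factoring map to land in $\mathcal Q_C$). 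The universal property is then inherited, and the displayed triangle commutes by construction. Since the equivalence of part (b) is an equivalence of the categories of measuring comodules in the two pictures — it is the identity on underlying $C$-comodules and morphisms — the universal objects must coincide, giving the isomorphism $\mathcal Q_C(\mathscr M,\mathscr N)=Q_C(U_{\mathscr A}(\mathscr M),U_{\mathscr B}(\mathscr N))$ of $C$-comodules; this is a formal consequence of uniqueness of universal objects, so no further work is needed there beyond observing that the comparison functor of (b) sends one universal object to a solution of the other universal problem.

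The main obstacle I anticipate is purely notational: aligning the iterated coproduct indices in \eqref{xeq1.2ij}-style formulas with the single coaction used on the associative side, and making sure the arities match (an $n$-ary operation $\theta\in\mathcal O(n)$ acting on a module consumes $n-1$ algebra inputs and one module input, which is why the generator $Z(\theta;a_1,\dots,a_{n-1})$ has $n-1$ algebra arguments). I would handle this by fixing once and for all the convention $Z(\theta;a_1,\dots,a_{m})$ with $\theta\in\mathcal O(m+1)$ as in the excerpt, and checking the base cases $n=1$ (the unit/counit compatibility) and $n=2$ (the bilinear case, which already pins down the coaction matching) explicitly before asserting the general pattern. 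Everything else — the universal construction in (a) and the transport-of-universality in the final isomorphism — is a formal repetition of the algebra-level arguments already in the paper, so the only real content is the identification of the two measuring-comodule conditions in part (b).
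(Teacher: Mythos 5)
Your proposal is correct and follows essentially the same route as the paper: part (a) via the cofree comodule $\mathfrak R_C(Hom_K(\mathscr M,\mathscr N))$ together with the sum of all subcomodules on which the canonical map is a measuring comodule, part (b) by unwinding both definitions through $Z(\theta;a_1,\dots,a_m)\cdot x=\theta_{\mathscr M}(a_1\otimes\cdots\otimes a_m\otimes x)$ and the formula for $U_{\mathcal O}(\phi)$, and the final isomorphism by uniqueness of universal objects. Two small remarks: the symbols $Z(\theta;a_1,\dots,a_m)$ span $U_{\mathcal O}(\mathscr A)$ as a vector space (products of $Z$'s are again $Z$'s by \eqref{mgkrel}), so your closure-under-products step is unnecessary; and your alternative of deducing (a) by transporting the associative-case construction through (b) only applies when $C$ is cocommutative (which is needed for $U_{\mathcal O}(\phi)$ to exist), whereas the direct cofree-comodule construction---which is the paper's argument and which you also give---works for an arbitrary coalgebra $C$.
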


In Section 8, we introduce the Sweedler product of a coalgebra $C$ with an $\mathcal O$-algebra $\mathscr A$. If $C$ is a coalgebra and $A$ is an ordinary associative
algebra, Anel and Joyal have defined in \cite{AJ} the Sweedler product algebra $C\rhd A$ such that
\begin{equation*}
C\rhd -: \mbox{($K$-algebras)}\longrightarrow \mbox{($K$-algebras)}
\end{equation*} is left adjoint to the functor which takes any $K$-algebra to its convolution algebra with respect to $C$. The algebra $C\rhd A$ is universal among algebras which appear
as targets of $C$-measurings starting from $A$. This allows the Sweedler Hom, the Sweedler product and the convolution algebra to be seen as a package of closely related ``Sweedler operations''
on algebras and coalgebras. 

\smallskip If $C$ is a cocommutative coalgebra, the space $[C,\mathscr A]$ of linear maps from $C$ to $\mathscr A$ carries the structure of an algebra over the operad $\mathcal O$ (see \eqref{convolax}).
In order to form the Sweedler product $C\rhd\mathscr A$, we then consider the free $\mathcal O$-algebra $\mathscr F(C\otimes \mathscr A)$ over the vector space $C\otimes \mathscr A$ and take its quotient over
certain relations described in \eqref{cproda}. We conclude by proving the following two results.

\begin{Thm}(see \ref{Th8.3}, \ref{Th8.4})
 Let $C$ be a coalgebra and let $\mathscr A$ be an $\mathcal O$-algebra. 
 
 \smallskip
 (a)  There is an $\mathcal O$-algebra $C\rhd \mathscr A$ and a  measuring $\phi(C,\mathscr A):C\longrightarrow Hom_K(\mathscr A,C\rhd \mathscr A)$ which has the following universal property: given any
measuring $\phi':C\longrightarrow Hom_K(\mathscr A,\mathscr B)$ of $\mathcal O$-algebras, there exists a unique morphism $f:C\rhd \mathscr A\longrightarrow \mathscr B$ 
of $\mathcal O$-algebras such that
$\phi'=Hom(\mathscr A,f)\circ \phi(C,\mathscr A)$.

\smallskip
(b) Suppose that $C$ is cocommutative. 
For $\mathcal O$-algebras $\mathscr A$, $\mathscr B$, there are natural isomorphisms
\begin{equation}
Alg_{\mathcal O}(C\rhd \mathscr A,\mathscr B)=Alg_{\mathcal O}(\mathscr A,[C,\mathscr B])
\end{equation}
\end{Thm}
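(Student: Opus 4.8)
The plan is to construct $C\rhd\mathscr A$ explicitly and then verify the two universal properties in sequence. For part (a), I would start from the free $\mathcal{O}$-algebra $\mathscr F(C\otimes\mathscr A)$ on the vector space $C\otimes\mathscr A$, and define $C\rhd\mathscr A$ as the quotient of $\mathscr F(C\otimes\mathscr A)$ by the $\mathcal{O}$-algebra ideal generated by the relations in \eqref{cproda}, which encode exactly the measuring identity: for $\theta\in\mathcal O(n)$, $c\in C$ and $a_1,\dots,a_n\in\mathscr A$, one forces $(c\otimes\theta(a_1\otimes\cdots\otimes a_n))$ to equal $\theta\big((c_1\otimes a_1)\otimes\cdots\otimes(c_n\otimes a_n)\big)$ inside the free algebra, together with the unitality relation coming from $\mathcal O(0)$. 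The candidate measuring $\phi(C,\mathscr A):C\longrightarrow Hom_K(\mathscr A,C\rhd\mathscr A)$ sends $c$ to the map $a\mapsto \overline{c\otimes a}$; the chosen relations make this a measuring by construction. Given any measuring $\phi':C\longrightarrow Hom_K(\mathscr A,\mathscr B)$, the assignment $c\otimes a\mapsto \phi'(c)(a)$ extends uniquely to an $\mathcal{O}$-algebra map $\mathscr F(C\otimes\mathscr A)\longrightarrow\mathscr B$ by the universal property of the free $\mathcal{O}$-algebra, and the measuring axioms for $\phi'$ say precisely that this map kills the generating relations, hence factors uniquely through $C\rhd\mathscr A$. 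Uniqueness of $f$ follows because $C\rhd\mathscr A$ is generated as an $\mathcal{O}$-algebra by the image of $C\otimes\mathscr A$, on which $f$ is pinned down by $\phi'=Hom(\mathscr A,f)\circ\phi(C,\mathscr A)$.

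For part (b), assume $C$ is cocommutative, so that by \eqref{convolax} the convolution space $[C,\mathscr B]=Hom_K(C,\mathscr B)$ is itself an $\mathcal{O}$-algebra, with $\theta\in\mathcal O(n)$ acting on $f_1,\dots,f_n\in[C,\mathscr B]$ by $c\mapsto \theta(f_1(c_1)\otimes\cdots\otimes f_n(c_n))$ (cocommutativity is what makes this well defined and associative/operadic). The natural bijection
\begin{equation*}
Alg_{\mathcal O}(C\rhd\mathscr A,\mathscr B)\;\cong\;Alg_{\mathcal O}(\mathscr A,[C,\mathscr B])
\end{equation*}
I would obtain by composing two identifications. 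First, by part (a), $Alg_{\mathcal O}(C\rhd\mathscr A,\mathscr B)$ is in natural bijection with the set $Meas_C(\mathscr A,\mathscr B)$ of $C$-measurings from $\mathscr A$ to $\mathscr B$: given $f:C\rhd\mathscr A\to\mathscr B$, form $Hom(\mathscr A,f)\circ\phi(C,\mathscr A)$, and conversely use the universal property. Second, I claim $Meas_C(\mathscr A,\mathscr B)$ is in natural bijection with $Alg_{\mathcal O}(\mathscr A,[C,\mathscr B])$: a measuring $\phi:C\to Hom_K(\mathscr A,\mathscr B)$ corresponds under the standard tensor–hom adjunction to a linear map $\hat\phi:\mathscr A\to Hom_K(C,\mathscr B)=[C,\mathscr B]$, $\hat\phi(a)(c)=\phi(c)(a)$, and a direct check shows the measuring identity for $\phi$ is equivalent to $\hat\phi$ being a morphism of $\mathcal{O}$-algebras with respect to the convolution structure \eqref{convolax}. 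Chaining these gives the desired isomorphism, and naturality in $\mathscr A$ and $\mathscr B$ is immediate from naturality of each step.

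The main obstacle I anticipate is not the adjunction bookkeeping but making sure the quotient in part (a) is formed correctly so that $C\rhd\mathscr A$ genuinely lands in $\mathcal{O}$-algebras and the relations \eqref{cproda} are stable enough to generate an honest $\mathcal{O}$-ideal — in particular handling the arity-zero part of $\mathcal O$ (units/constants) and the case $n=0$ of the measuring axiom, where $\phi(c)$ applied to a nullary operation must return $\epsilon(c)$ times the corresponding constant in $C\rhd\mathscr A$. A second delicate point is verifying in part (b) that the convolution $\mathcal{O}$-algebra structure on $[C,\mathscr B]$ really does require only cocommutativity (and not, say, a bialgebra structure on $C$), and that the equivalence "$\phi$ is a measuring" $\iff$ "$\hat\phi$ is an $\mathcal O$-algebra map" matches up the Sweedler indices on both sides correctly; this is a routine but index-heavy computation using coassociativity and cocommutativity to rewrite $\Delta^{n}$. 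Once these structural checks are in place, both universal properties fall out formally.
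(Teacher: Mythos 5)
Your proposal is correct, and part (a) coincides with the paper's own argument: the same quotient of $\mathscr F(C\otimes\mathscr A)$ by the ideal generated by the relations \eqref{cproda}, the same candidate measuring $c\mapsto(a\mapsto c\rhd a)$, the same factorization of the induced map out of the free algebra, and the same uniqueness argument (the paper phrases ``generated by the image of $C\otimes\mathscr A$'' as $f_1q=f_2q$ with $q$ an epimorphism). For part (b) your organization differs mildly from the paper's: you compose two bijections, $Alg_{\mathcal O}(C\rhd\mathscr A,\mathscr B)\cong Meas_C(\mathscr A,\mathscr B)$ (from the universal property in (a)) and $Meas_C(\mathscr A,\mathscr B)\cong Alg_{\mathcal O}(\mathscr A,[C,\mathscr B])$ (transposition $\hat\phi(a)(c)=\phi(c)(a)$, with the measuring identity translating exactly into the convolution-algebra morphism condition of \eqref{convolax}), whereas the paper constructs both directions of the adjunction by hand, in particular re-verifying at the level of $\mathscr F(C\otimes\mathscr A)$ that the map induced by an algebra morphism $g:\mathscr A\to[C,\mathscr B]$ kills the ideal $\mathscr J$. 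Your route is slightly more economical since it reuses (a) instead of repeating that computation, and it isolates the clean statement ``$\phi$ is a $C$-measuring into $\mathscr B$ iff $\hat\phi$ is an $\mathcal O$-algebra map into $[C,\mathscr B]$,'' which is exactly the content of the paper's index checks; the paper's direct construction, on the other hand, makes the inverse maps explicit without appealing to the intermediate set of measurings. Both approaches require the same underlying verifications (including the arity-zero case and the use of cocommutativity only to make $[C,\mathscr B]$ an $\mathcal O$-algebra), so I see no gap in your argument.
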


\smallskip

We mention that after the writing of this paper, we discovered
that some of these ideas on operads have previously been presented
by M. Anel in a talk on joint work with A. Joyal \cite{AJo}.

\smallskip

{\bf Acknowledgements:} We are grateful to Emily Riehl and Anita Naolekar for helpful discussions on operads. The first author is also grateful to the Fields Institute in Toronto, where
part of this paper was written.

\section{Measurings and the shuffle product in Hochschild homology}

Let $K$ be a field and let $A$, $B$ be $K$-algebras. Throughout this paper, we let $C$ be a $K$-coalgebra with coproduct $\Delta: C \longrightarrow C \otimes C$ and counit  $\epsilon : C \longrightarrow K$. We will express the coproduct on $C$ using the standard Sweedler notation $\Delta(c)=\sum c_1\otimes c_2$ for each $c\in C$.

\begin{defn}(see Sweedler \cite{Swd}) 
Let $A$, $B$ be  $K$-algebras. A coalgebra measuring $(C,\phi)$ from $A$ to $B$ consists of a $K$-coalgebra $(C,\Delta,\epsilon)$ and a $K$-linear map $\phi: C
\longrightarrow Hom_K(A,B)$ such that
\begin{equation}\label{eq1.1i}
\phi(c)(aa')=\sum (\phi(c_1)(a))( \phi(c_2)(a'))\qquad \phi(c)(1_A)=\epsilon(c) 1_B.
\end{equation}  for  all $c\in C$ and $a,a'\in A$. For $c\in C$ and $a\in A$, we will often suppress the map $\phi$ and write $\phi(c)(a)$
simply as $c\cdot a$.
\end{defn}

When $c\in C$ is a grouplike element, i.e., $\Delta(c)=c\otimes c$ and $\epsilon(c)=1$, we notice that the condition in \eqref{eq1.1i} implies that $\phi(c)\in Hom_K(A,B)$ is an ordinary ring 
homomorphism from $A$ to $B$. Therefore, a coalgebra measuring may  be seen as a generalized ring morphism from $A$ to $B$. 

\smallskip

In \cite{Swd}, Sweedler  showed that given $K$-algebras $A$ and $B$, there exists a   $K$-coalgebra $\mathcal{M}= \mathcal{M}(A,B)$ and  a measuring $\Phi:  \mathcal{M}  \longrightarrow Hom_K(A,B)$ with the following universal property:
if $(C, \phi)$ is a measuring from $A$ to $B$, then there exists a unique coalgebra morphism $\psi: C \longrightarrow \mathcal{M}$ making the following diagram commute.

\begin{center}
\begin{tikzcd}[column sep=1.5em]
\mathcal{M}   \arrow[rr, , ""{name=U}, , "\Phi"]{} &&Hom_K(A, B) \\
 & C  \arrow[from=U, phantom, "\scalebox{1.5}{$\circlearrowright$}" description] \arrow{ur}{\phi} \arrow{ul}{\psi}
\end{tikzcd}
\end{center}

The coalgebra $\mathcal{M}(A, B)$ is known  as the universal measuring coalgebra and the pair $(\mathcal M(A, B),\Phi)$   is universal among coalgebra measurings    from $A$ to $B$.

\smallskip

If $C$ is a cocommutative coalgebra, then a measuring $(C, \phi)$ from $A$ to $B$ is called a cocommutative measuring. Accordingly, we have a cocommutative coalgebra $\mathcal{M}_c(A, B)$ and a measuring
$\Phi_c: \mathcal{M}_c(A, B)\longrightarrow Hom_K(A,B)$ which is universal among cocommutative measurings from $A$ to $B$. Further, the universal  cocommutative measuring coalgebra $\mathcal{M}_c(A, B)$ is isomorphic to the cocommutative part of the universal measuring coalgebra $\mathcal{M}(A, B)$  (see \cite[Proposition 1.4]{GrM1}). 

\smallskip

We know that a map $A\longrightarrow B$ of $K$-algebras induces a  morphism $HH_\bullet(A)\longrightarrow HH_\bullet(B)$ of  Hochschild homology groups. Since a measuring may be treated as a generalized ring morphism,  we  will now consider morphisms induced on Hochschild homology by coalgebra measurings from $A$ to $B$.

\begin{prop}\label{Prp2.4}
Let $A$ and $B$ be $K$-algebras and let  $(C_\bullet(A), b)$ and $(C_\bullet(B), b)$ be their respective Hochschild complexes. If $(C, \phi)$ is a cocommutative measuring from $A$ to $B$, then for each $c\in C$, we have a  morphism $\tilde \phi(c): (C_\bullet(A),b)\longrightarrow (C_\bullet(B),b)$
of complexes given by:
\begin{equation} 
\tilde\phi(c)_p:C_p(A)\longrightarrow C_p(B)\qquad \tilde\phi(c)_p(a_0,...,a_p)=c\cdot (a_0,...,a_p):= \sum (\phi(c_1)(a_0),....,\phi(c_{p+1})(a_p))
\end{equation}

Here, $c_1, \ldots, c_{p+1}$ are given by the iterated coproduct  $\Delta^p(c)= \sum c_1 \otimes \cdots \otimes c_{p+1}$ and an element
$a_0\otimes\ldots \otimes a_p\in C_p(A)=A^{\otimes (p+1)}$ is denoted by
$(a_0,...,a_p)$.
\end{prop}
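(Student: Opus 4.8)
The plan is to verify directly that $\tilde\phi(c)_\bullet$ commutes with the Hochschild differential $b$, which is the only thing requiring proof since the maps $\tilde\phi(c)_p$ are clearly $K$-linear and well-defined. Recall that on $C_p(A) = A^{\otimes(p+1)}$ the differential is $b(a_0,\dots,a_p) = \sum_{i=0}^{p-1}(-1)^i(a_0,\dots,a_ia_{i+1},\dots,a_p) + (-1)^p(a_pa_0,a_1,\dots,a_{p-1})$. So for a fixed $c \in C$ I would compute both $b\circ\tilde\phi(c)_p$ and $\tilde\phi(c)_{p-1}\circ b$ applied to a generator $(a_0,\dots,a_p)$, expand using the iterated coproduct $\Delta^p(c) = \sum c_1\otimes\cdots\otimes c_{p+1}$, and match terms.

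The key computational inputs are: first, the measuring identity $\phi(c)(aa') = \sum \phi(c_1)(a)\phi(c_2)(a')$, which tells me how a single tensor slot $\phi(c_j)(a_ia_{i+1})$ splits; second, coassociativity, in the form $(\mathrm{id}^{\otimes(j-1)}\otimes\Delta\otimes\mathrm{id}^{\otimes(p-j+1)})\circ\Delta^{p-1} = \Delta^p$, so that applying the measuring identity in the $j$-th slot of $\Delta^{p-1}(c)$ exactly reproduces the appropriate consecutive pair $c_j\otimes c_{j+1}$ coming from $\Delta^p(c)$. Concretely, the face map $d_i$ with $0\le i\le p-1$ merges $a_i, a_{i+1}$ into slot $i$ of a $p$-tuple, and on the coalgebra side one checks $\tilde\phi(c)_{p-1}(d_i(a_0,\dots,a_p)) = \sum(\phi(c_1)a_0,\dots,\phi(c_{i+1})(a_ia_{i+1}),\dots)$ which by the measuring axiom and coassociativity equals $\sum(\phi(c_1)a_0,\dots,\phi(c_{i+1})a_i\cdot\phi(c_{i+2})a_{i+1},\dots) = d_i(\tilde\phi(c)_p(a_0,\dots,a_p))$. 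Thus $\tilde\phi(c)$ commutes with each non-cyclic face, and summing with signs gives compatibility with the non-cyclic part of $b$.

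The main obstacle — and the reason cocommutativity is hypothesized — is the last face $d_p(a_0,\dots,a_p) = (a_pa_0,a_1,\dots,a_{p-1})$, which merges the \emph{last} and \emph{first} tensor slots. Applying $\tilde\phi(c)_{p-1}$ produces $\sum(\phi(c_1)(a_pa_0),\phi(c_2)a_1,\dots,\phi(c_p)a_{p-1}) = \sum(\phi(c_1)a_p\cdot\phi(c_2)a_0,\phi(c_3)a_1,\dots)$, where the index $c_1$ gets used for $a_p$ and $c_2$ for $a_0$; whereas $d_p(\tilde\phi(c)_p(a_0,\dots,a_p))$ gives $\sum(\phi(c_{p+1})a_p\cdot\phi(c_1)a_0,\phi(c_2)a_1,\dots)$, i.e. the factor attached to $a_p$ is the \emph{last} component $c_{p+1}$ of $\Delta^p(c)$ rather than the first. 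Reconciling these two requires moving $c_{p+1}$ past $c_1,\dots,c_p$ to the front, which is precisely a cyclic permutation of the components of $\Delta^p(c)$; this is legitimate because $C$ is cocommutative, so $\Delta^p(c)$ is symmetric under the $S_{p+1}$-action on its tensor factors. I would make this rigorous by invoking cocommutativity in the form that $\Delta^p(c) = \sum c_{\sigma(1)}\otimes\cdots\otimes c_{\sigma(p+1)}$ for any $\sigma \in S_{p+1}$, applied to the cyclic shift. Once the cyclic face is handled, collecting all terms shows $b\tilde\phi(c)_p = \tilde\phi(c)_{p-1}b$, completing the proof that $\tilde\phi(c)$ is a chain map.
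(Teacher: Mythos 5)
Your proposal is correct and follows essentially the same route as the paper's proof: reduce to commutation with each face map $d_i$, handle $0\leq i\leq p-1$ via the measuring identity together with coassociativity of $\Delta$, and handle the cyclic face $d_p$ by invoking cocommutativity in the form $\sum c_1\otimes\cdots\otimes c_{p+1}=\sum c_{p+1}\otimes c_1\otimes\cdots\otimes c_p$. Nothing essential is missing.
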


\begin{proof}
Since $C$ is cocommutative,  for any $c \in C$ and any  permutation $\sigma \in S_{p+1}$, we must have 
 $\sum c_1\otimes ....\otimes c_{p+1}= \Delta^p(c)=\sum c_{\sigma(1)}\otimes ...\otimes c_{\sigma(p+1)}$. 

\smallskip

We know that  the differential $ b: C_p(A) \longrightarrow C_{p-1}(A)$ is given by $b = \sum_{i=0}^p (-1)^i d_i$ where $d_i(a_0, \ldots, a_p)= (a_0, \cdots, a_ia_{i+1}, \ldots, a_p)$ for $ 0 \leq i \leq p-1$ and $d_p(a_0, \ldots, a_p)=(a_pa_0, \ldots, a_{p-1})$. Therefore, it is enough to show that $ \tilde \phi(c)$ commutes with each $d_i$.

\smallskip
For $0 \leq i \leq p-1$, we have 
\begin{align*}
\tilde \phi(c) \big( d_{i} (a_0,  \ldots, a_p)\big) &=\tilde \phi(c) (a_0,  \ldots,  a_{i}a_{i+1},  \cdots,  a_p)\\
&= \sum\big(\phi(c_1)(a_0), \ldots,  \phi(c_{i+1})(a_{i} a_{i+1}) \ldots, \phi(c_p)(a_p)\big)\\
&=\sum    \big(\phi(c_1)(a_0),  \ldots, \phi(c_{i+1})(a_{i}) \phi(c_{i+2})(a_{i+1}),  \ldots,   \phi(c_{p+1})(a_p)\big)\\
&= d_{i}\Big( \sum  \big(\phi(c_1)(a_0),  \ldots,  \phi(c_{i+1})(a_{i}), \phi(c_{i+2})(a_{i+1}), \ldots,  \phi(c_{p+1})(a_p)\big)\Big)\\
&= d_{i}  \big( \tilde \phi(c)(a_0, \ldots, a_p)\big)
\end{align*}
\smallskip

For $i= p$, we have 
\begin{align*}
\tilde \phi(c)\big( d_p  (a_0,  \ldots,  a_p)\big) &=\tilde \phi(c)(a_pa_0 \otimes \cdots \otimes a_{p-1})\\
&= \sum\big(\phi(c_1)(a_pa_0),  \ldots, \phi(c_p)(a_{p-1})\big)\\
&=\sum \big(\phi(c_1)(a_p)\phi(c_{2})(a_0),  \ldots,   \phi(c_{p+1})(a_{p-1})\big)
\end{align*}

On the other hand, we see that

\begin{align*}
d_{p}  \big( \tilde \phi(c) (a_0, \ldots, a_p)\big)
&=  d_{p}  \Big( \sum \big(\phi(c_1)(a_0),  \phi(c_2)(a_1),   \ldots,   \phi(c_{p+1})(a_p)\big)\Big)\\
&=  \sum  \big( \phi(c_{p+1})(a_p)\phi(c_{1})(a_0),   \ldots,  \phi(c_{p})(a_{p-1})\big)\Big)\\
\end{align*}

Since $C$ is cocommutative, we know that $\sum c_1 \otimes c_2  \otimes \cdots \otimes c_{p+1} = \sum c_{p+1} \otimes c_{1} \otimes c_{2} \otimes \cdots \otimes c_{p}$, which gives
$$\tilde \phi(c)\big( d_p (a_0,  \ldots,  a_p)\big)=d_{p}  \big( \tilde \phi(c)(a_0, \ldots, a_p)\big).$$
\end{proof}

\smallskip

It follows from Proposition \ref{Prp2.4} that each $\tilde \phi(c)$ induces  a map  on homology groups that we continue to denote by $ \tilde\phi(c): HH_{\bullet}(A) \longrightarrow HH_{\bullet}(B)$. In other words, we have a morphism
\begin{equation}\label{hochmes}
\tilde \phi: C\longrightarrow Hom_K(HH_\bullet(A),HH_\bullet(B))
\end{equation}

\smallskip
For $\sigma \in S_p$ and $(a_0, a_1, \ldots, a_p) \in C_p(A)$,  set $\sigma \cdot (a_0, a_1, \ldots, a_p) = (a_0, a_{\sigma^{-1}(1)}, \ldots, a_{\sigma^{-1}(p)})$.
For $K$-algebras $A$ and $B$,  we know (see \cite[$\S$ 4.2]{Loday}) that  there is a $(p,q)$ shuffle product
$$sh_{pq}: C_p(A) \otimes C_q(B) \longrightarrow C_{p+q}(A \otimes B)$$ given by the  formula:

$$(a_0,  \ldots,  a_p) \times (b_0,  \ldots,  b_q)= \sum sgn(\sigma) \sigma \cdot (a_0 \otimes b_0, a_1\otimes 1,  \ldots,  a_p \otimes 1, 1\otimes b_1, \ldots, 1 \otimes b_q)$$

where the sum  runs over all permutations $ \sigma \in S_{p+q}$ such that  $\sigma(1) < \sigma(2)< \cdots < \sigma(p)$ and $\sigma(p+1) < \sigma(p+2)< \cdots < \sigma(p+q)$. 

\smallskip

Now, the shuffle product on complexes is given by:
$$sh: (C_\bullet(A) \otimes C_\bullet(B) )_n = \underset{p+q=n}{\bigoplus} C_p(A) \otimes C_q(B) \longrightarrow C_n(A \otimes B).$$ In particular, if we take $A$ to be a commutative $K$-algebra, then the multiplication $A \otimes A \longrightarrow A$ is a morphism of rings. Composing with the shuffle product, one has
$$sh_{pq} : C_p(A) \otimes C_q(A) \longrightarrow C_{p+q}(A\otimes A)\longrightarrow C_{p+q}(A)$$
given by 
$$(a_0,  \ldots,  a_p) \times (a_0',  \ldots,  a_q')= \sum sign(\sigma) \sigma \cdot (a_0a_0', a_1,  \ldots,  a_p,  a_1', \ldots,  a_q').$$
The shuffle product 
$$sh: (C_\bullet(A) \otimes C_\bullet(A) )_n = \underset{p+q=n}{\bigoplus} C_p(A) \otimes C_q(A) \longrightarrow C_n(A\otimes A)\longrightarrow C_n(A )$$ 
is a morphism of complexes inducing an algebra structure on Hochschild homologies which we denote by $(HH_\bullet(A),sh)$.

\begin{prop}\label{rTh2.3e}
Let $A$ and $B$ be commutative $K$-algebras and let $(C, \phi)$ be a cocommutative measuring from $A$ to $B$. Then, the induced $K$-linear map  $ \tilde \phi: C\longrightarrow Hom_K(HH_\bullet(A),HH_\bullet(B))$ is a cocommutative measuring from $(HH_{\bullet}(A),sh)$ to $(HH_{\bullet}(B),sh)$.
\end{prop}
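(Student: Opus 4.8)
The plan is to verify the two measuring axioms directly on Hochschild chains equipped with the shuffle products, and then pass to homology. Cocommutativity of the induced measuring requires no argument, since its underlying coalgebra is still $C$, which is cocommutative by hypothesis. The unit axiom is also immediate: the unit of $(HH_\bullet(A),sh)$ is the class of $1_A$ in $HH_0(A)$, and since $\Delta^0(c)=c$ one has $\tilde\phi(c)_0(1_A)=\phi(c)(1_A)=\epsilon(c)\,1_B$ straight from \eqref{eq1.1i}. So the real content is the product axiom $\tilde\phi(c)(\alpha\times\beta)=\sum\tilde\phi(c_1)(\alpha)\times\tilde\phi(c_2)(\beta)$ for $\alpha\in C_p(A)$ and $\beta\in C_q(A)$, which I would establish at the level of chains.

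I would first record the elementary equivariance fact, already used in the proof of Proposition \ref{Prp2.4}: since $C$ is cocommutative, each $\tilde\phi(c)_{n}$ commutes with the symmetric group action appearing in the shuffle formula, because permuting the entries $a_1,\dots,a_n$ merely forces the same permutation of the coproduct legs $c_2,\dots,c_{n+1}$, which is harmless. Combining this with the explicit shuffle formula recalled above and the definition of $\tilde\phi(c)_n$, the left-hand side of the product axiom becomes a signed sum over $(p,q)$-shuffles $\sigma$ of the chains $\sigma\cdot\tilde\phi(c)_{p+q}(a_0a_0',a_1,\dots,a_p,a_1',\dots,a_q')$. Applying \eqref{eq1.1i} in the zeroth slot to split $\phi(c_1)(a_0a_0')=\sum\phi((c_1)_1)(a_0)\,\phi((c_1)_2)(a_0')$, and then using coassociativity to absorb this extra splitting together with $\Delta^{p+q}(c)$ into a single iterated coproduct $\Delta^{p+q+1}(c)=\sum d_1\otimes\cdots\otimes d_{p+q+2}$, the left-hand side takes the form
\begin{equation*}
\sum_{\sigma}sgn(\sigma)\,\sigma\cdot\bigl(\phi(d_1)(a_0)\phi(d_2)(a_0'),\ \phi(d_3)(a_1),\dots,\phi(d_{p+2})(a_p),\ \phi(d_{p+3})(a_1'),\dots,\phi(d_{p+q+2})(a_q')\bigr),
\end{equation*}
where $\sigma$ runs over $(p,q)$-shuffles and the Sweedler sum over $\Delta^{p+q+1}(c)=\sum d_1\otimes\cdots\otimes d_{p+q+2}$ is understood.

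On the other side, coassociativity identifies $\sum\Delta^p(c_1)\otimes\Delta^q(c_2)$ with $\Delta^{p+q+1}(c)=\sum d_1\otimes\cdots\otimes d_{p+q+2}$, so that $\sum\tilde\phi(c_1)_p(\alpha)\times\tilde\phi(c_2)_q(\beta)$ equals $\sum(\phi(d_1)(a_0),\dots,\phi(d_{p+1})(a_p))\times(\phi(d_{p+2})(a_0'),\dots,\phi(d_{p+q+2})(a_q'))$, and expanding this shuffle product puts it in the form
\begin{equation*}
\sum_{\sigma}sgn(\sigma)\,\sigma\cdot\bigl(\phi(d_1)(a_0)\phi(d_{p+2})(a_0'),\ \phi(d_2)(a_1),\dots,\phi(d_{p+1})(a_p),\ \phi(d_{p+3})(a_1'),\dots,\phi(d_{p+q+2})(a_q')\bigr),
\end{equation*}
with the same conventions on $\sigma$ and on the Sweedler sum. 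The two expressions differ only in the way the legs $d_1,\dots,d_{p+q+2}$ of the single iterated coproduct $\Delta^{p+q+1}(c)$ are matched with the argument slots, that is, by one fixed permutation of the coproduct legs independent of $\sigma$; since $C$ is cocommutative, $\sum d_1\otimes\cdots\otimes d_{p+q+2}$ is invariant under that permutation, so the two sides agree term by term. This proves the product axiom on chains, hence on $HH_\bullet$.

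The main obstacle is precisely this final reconciliation. The shuffle product fuses $a_0$ and $a_0'$ into the single entry $a_0a_0'$, whereas in $\tilde\phi(c_1)(\alpha)\times\tilde\phi(c_2)(\beta)$ the entries $a_0$ and $a_0'$ are acted on by separate legs, coming respectively from the coproducts of $c_1$ and of $c_2$; recombining such a product $\phi(d)(a_0)\,\phi(d')(a_0')$ into a single term $\phi(e)(a_0a_0')$ through \eqref{eq1.1i} requires $d$ and $d'$ to occupy adjacent positions of an iterated coproduct of $c$, and it is exactly cocommutativity that makes the needed rearrangement legitimate. Everything else is routine: the signs come entirely from the shuffles and are untouched by $\tilde\phi(c)$, while commutativity of $A$ and of $B$ is used only to make the two shuffle products well-defined in the first place.
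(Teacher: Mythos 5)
Your argument is correct and is essentially the paper's own proof: both verify the product identity at the chain level using the explicit $(p,q)$-shuffle formula, split the fused entry $\phi(c_1)(a_0a_0')$ via the measuring identity, and conclude by observing that the two resulting expressions differ only by a fixed permutation of the legs of the iterated coproduct, which cocommutativity renders harmless. Your explicit check of the unit axiom and of equivariance under the shuffle permutations matches what the paper uses (the latter is stated there as $c\cdot(\sigma\cdot(a_0,\ldots,a_p))=\sigma\cdot(c\cdot(a_0,\ldots,a_p))$), so there is no substantive difference.
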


\begin{proof}
Let $(a_0,  \ldots,  a_p) \in C_p(A)$ and $(a_0',  \ldots,  a_q') \in C_q(A)$. 
Since $C$ is cocommutative, it follows that for $c\in C$ and $\sigma \in S_p$, we have 
$$ c \cdot\big( \sigma \cdot (a_0,  \ldots,  a_p)\big) = \sigma \cdot \big( c \cdot (a_0,  \ldots,  a_p) \big)$$

For $c \in C$, we now see that 

\begin{equation*}
\begin{array}{l}
\tilde \phi(c)  \Big(sh_{pq} \big((a_0,  \ldots,  a_p) \times (a_0',  \ldots,  a_q')\big)\Big) \\ = \tilde \phi(c) \big(\sum {sgn}(\sigma) \sigma \cdot (a_0a_0', a_1, \ldots,  a_p , a'_1,\dots, a_{q}' \big)\\
= \sum {sgn}(\sigma) \sigma \cdot \big(\phi(c_1)(a_0a_0'), \phi(c_2)(a_1), \ldots,  \phi(c_{p+1})(a_p) , \phi(c_{p+2})(a_1'),\ldots, \phi(c_{p+q+1})(a_{q}')\big)\\
=\sum {sgn}(\sigma) \sigma \cdot \big(\phi(c_1)(a_0)\phi(c_2)(a_0'), \phi(c_3)(a_1), \ldots,  \phi(c_{p+2})(a_p) , \phi(c_{p+3})(a_1'),\ldots, \phi(c_{p+q+2})(a_{q}')\big)\\
\end{array}
\end{equation*}

On the other hand, we have

\begin{equation*}
\begin{array}{l}
sh_{pq} \big(\phi(c_1)(a_0,  \ldots,  a_p) \times  \phi(c_2)(a_0',  \ldots,  a_q')\big) \\
= sh_{pq} \Big(  \sum \big(\phi(c_1)(a_0),  \ldots,  \phi(c_{p+1})(a_p) \big) \times \big(\phi(c_{p+2})(a_0'),  \ldots,  \phi(c_{p+q+2})(a_q') \big)\Big)\\
= \sum   {sgn}(\sigma) \sigma \cdot  \big(\phi(c_1)(a_0) \phi(c_{p+2})(a_0'), \phi(c_2)(a_1),...,\phi(c_{p+1})(a_p), \phi(c_{p+3})(a_1') ,..., \phi(c_{p+q+2})(a_{q}')\big)\\
\end{array}
\end{equation*}

Using the fact that $\sum c_1 \otimes c_2  \otimes \cdots \otimes c_{p+q+2} = \sum c_1 \otimes c_{p+2} \otimes c_{2} \otimes \cdots \otimes c_{p+1} \otimes c_{p+3} \otimes \ldots \otimes c_{p+q+2}$, we get
 $$\tilde \phi(c) \cdot \Big(sh_{pq} \big((a_0,  \ldots,  a_p) \times (a_0',  \ldots,  a_q')\big)\Big)=sh_{pq} \big(\phi(c_1)(a_0,  \ldots,  a_p) \times  \phi(c_2)(a_0',  \ldots,  a_q')\big).$$
 Thus, a cocommutative measuring from $A$ to $B$ induces a measuring from $ HH_{\bullet}(A)$ to $HH_{\bullet}(B)$.
\end{proof}

\begin{Thm}\label{Th28y} Let $A$ and $B$ be commutative $K$-algebras. Then, there
is a canonical morphism $\tau(A,B):\mathcal M_c(A,B)\longrightarrow
\mathcal M_c((HH_\bullet(A),sh),(HH_\bullet(B),sh))$ of cocommutative 
coalgebras. 

\end{Thm}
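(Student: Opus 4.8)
The plan is to deduce Theorem \ref{Th28y} directly from Proposition \ref{rTh2.3e} together with the universal property of the cocommutative universal measuring coalgebra. By Proposition \ref{rTh2.3e}, the pair $(\mathcal M_c(A,B), \tilde\Phi_c)$ — where $\Phi_c:\mathcal M_c(A,B)\longrightarrow Hom_K(A,B)$ is the universal cocommutative measuring from $A$ to $B$ and $\tilde\Phi_c$ is the induced map of \eqref{hochmes} applied to it — is itself a cocommutative measuring from $(HH_\bullet(A),sh)$ to $(HH_\bullet(B),sh)$; here I use that $\mathcal M_c(A,B)$ is cocommutative by construction, so that Propositions \ref{Prp2.4} and \ref{rTh2.3e} apply with $C=\mathcal M_c(A,B)$ and $\phi=\Phi_c$.

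First I would spell out this measuring: for $c\in\mathcal M_c(A,B)$ define $\tilde\Phi_c(c):HH_\bullet(A)\longrightarrow HH_\bullet(B)$ to be the map on homology induced by the chain map $\tilde\Phi_c(c)_\bullet$ of Proposition \ref{Prp2.4}. Linearity in $c$ is immediate from the formula, and Proposition \ref{rTh2.3e} (with $C=\mathcal M_c(A,B)$) says precisely that this satisfies the measuring identities with respect to the shuffle products; the unit condition $\tilde\Phi_c(c)(1)=\epsilon(c)\cdot 1$ on $HH_0=A/[A,A]=A$ (for commutative $A$) follows from the corresponding unit condition for $\Phi_c$ on $A$. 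Hence $(\mathcal M_c(A,B),\tilde\Phi_c)$ is a cocommutative measuring from $(HH_\bullet(A),sh)$ to $(HH_\bullet(B),sh)$. Then the universal property of $\mathcal M_c\big((HH_\bullet(A),sh),(HH_\bullet(B),sh)\big)$ yields a unique morphism of cocommutative coalgebras
\begin{equation*}
\tau(A,B):\mathcal M_c(A,B)\longrightarrow \mathcal M_c\big((HH_\bullet(A),sh),(HH_\bullet(B),sh)\big)
\end{equation*}
through which $\tilde\Phi_c$ factors as $\tilde\Phi_c=\Phi_c'\circ\tau(A,B)$, where $\Phi_c'$ is the universal measuring on the target. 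This is the desired canonical morphism.

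The only genuine point requiring care — and the one I would treat as the main obstacle — is verifying that the assignment $c\mapsto\tilde\Phi_c(c)$ is well-defined on $\mathcal M_c(A,B)$, i.e. that Proposition \ref{rTh2.3e} is really being invoked with the coalgebra $C$ taken to be $\mathcal M_c(A,B)$ itself rather than an arbitrary external $C$. Since both Proposition \ref{Prp2.4} and Proposition \ref{rTh2.3e} are stated for an arbitrary cocommutative coalgebra $C$ with a cocommutative measuring, and $\mathcal M_c(A,B)$ is a cocommutative coalgebra equipped with the cocommutative measuring $\Phi_c$, there is nothing extra to prove: we simply substitute. I would also remark in passing on the ``canonicity'' claim — $\tau(A,B)$ is characterized uniquely by the factorization property, hence is natural in the evident sense once one checks that the constructions $\tilde\Phi$ and $\mathcal M_c$ are functorial, which is the content of part (3) of the first summarized theorem (Theorem \ref{Theorem2.5ft}) and need not be belabored here.
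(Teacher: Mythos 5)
Your proposal is correct and follows essentially the same route as the paper: apply Proposition \ref{rTh2.3e} to the universal cocommutative measuring $\Phi_c:\mathcal M_c(A,B)\longrightarrow Hom_K(A,B)$ (taking $C=\mathcal M_c(A,B)$, which is cocommutative), obtaining a measuring $\tilde\Phi_c$ with respect to the shuffle products, and then invoke the universal property of $\mathcal M_c((HH_\bullet(A),sh),(HH_\bullet(B),sh))$ to produce $\tau(A,B)$. Your extra remarks on the unit condition and on canonicity are harmless elaborations of the same argument.
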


\begin{proof} We consider the universal cocommutative
measuring $\Phi_c:\mathcal M_c(A,B)\longrightarrow Hom_K(A,B)$. Applying Proposition \ref{rTh2.3e} to $\Phi_c$, we obtain a measuring
$\tilde{\Phi}_c: \mathcal M_c(A,B)\longrightarrow Hom_K(HH_\bullet(A),HH_\bullet(B))$ with respect to the shuffle product structures on $HH_\bullet(A)$ and $HH_\bullet(B)$. The result
now follows from the universal property of $\mathcal M_c((HH_\bullet(A),sh),(HH_\bullet(B),sh))$.

\end{proof}

Let $A$, $B$, $D$ be $K$-algebras. Let $\phi:C\longrightarrow Hom_K(A,B)$ be a coalgebra measuring from $A$ to $B$ and let
$\phi':C'\longrightarrow Hom_K(B,D)$ be a coalgebra measuring from $B$ to $D$. Then, it may be easily verified that
the induced morphism
\begin{equation}
\begin{CD}
C\otimes C'@>\phi\otimes \phi'>> Hom_K(A,B)\otimes Hom_K(B,D)@>\circ>> Hom_K(A,D) \\
\end{CD}
\end{equation} is a coalgebra measuring from $A$ to $D$.  By the  property of the universal measuring coalgebra, it follows that the above composition
factors through $\mathcal M(A,D)$. In particular, there is a canonical morphism of coalgebras
\begin{equation}\label{sr2.4qe}
\circ : \mathcal M(A,B)\otimes \mathcal M(B,D)\longrightarrow \mathcal M(A,D)
\end{equation} Additionally, the obvious map $K\longrightarrow Hom_K(A,A)$ given by $t\mapsto t\cdot id_A$ is a measuring, leading
to a coalgebra map $K\longrightarrow \mathcal M(A,A)$. Together with the compositions in \eqref{sr2.4qe}, it follows that $\mathcal M(A,B)$
gives an enrichment of the category of $K$-algebras over the symmetric monoidal category of $K$-coalgebras. Consequently, the coalgebra $\mathcal M(A,B)$
is also known as the ``Sweedler hom'' of two algebras (see, for instance, \cite{AJ}). 

\smallskip Similar to \eqref{sr2.4qe}, there is a canonical morphism of cocommutative coalgebras
\begin{equation*}
\circ : \mathcal M_c(A,B)\otimes \mathcal M_c(B,D)\longrightarrow \mathcal M_c(A,D)
\end{equation*} leading to an enrichment of the category $Alg_K$ of algebras over the category of cocommutative coalgebras. We will denote this enriched category
by $ALG_K$. The full subcategory of $ALG_K$ consisting only of commutative algebras will be denoted
by $cALG_K$.

\smallskip
For commutative algebras $A$, $B$, it is clear that taking the ``hom object''
to be $\mathcal M_c((HH_\bullet(A),sh),(HH_\bullet(B),sh))$  provides another enrichment of the category of commutative algebras  over cocommutative coalgebras.  We will denote
this second enriched category by $\widetilde{cALG}_K$. 

\begin{Thm}\label{Theorem2.5ft} For commutative $K$-algebras $A$, $B$, the morphisms  
\begin{equation*}\tau(A,B):\mathcal M_c(A,B)\longrightarrow
\mathcal M_c((HH_\bullet(A),sh),(HH_\bullet(B),sh))
\end{equation*} induce a functor $cALG_K\longrightarrow \widetilde{cALG}_K$ between categories enriched
over cocommutative coalgebras.
\end{Thm}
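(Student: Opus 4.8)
The plan is to verify that $\tau$ is a functor of $\mathrm{Set}$-indexed families of coalgebras, i.e. that it is compatible with the composition maps $\circ$ of the two enrichments and with the identity maps $K \to \mathcal M_c(A,A)$. Since the Hom-objects of $cALG_K$ are the $\mathcal M_c(A,B)$ and those of $\widetilde{cALG}_K$ are the $\mathcal M_c((HH_\bullet(A),sh),(HH_\bullet(B),sh))$, what must be shown is that for commutative $K$-algebras $A,B,D$ the square
\begin{equation*}
\begin{CD}
\mathcal M_c(A,B)\otimes \mathcal M_c(B,D) @>\circ>> \mathcal M_c(A,D) \\
@V\tau(A,B)\otimes\tau(B,D)VV @VV\tau(A,D)V \\
\mathcal M_c((HH_\bullet(A),sh),(HH_\bullet(B),sh))\otimes \mathcal M_c((HH_\bullet(B),sh),(HH_\bullet(D),sh)) @>\circ>> \mathcal M_c((HH_\bullet(A),sh),(HH_\bullet(D),sh))
\end{CD}
\end{equation*}
commutes, and that the unit triangle $K \to \mathcal M_c(A,A) \xrightarrow{\tau(A,A)} \mathcal M_c((HH_\bullet(A),sh),(HH_\bullet(A),sh))$ agrees with the unit of the target enrichment.

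First I would reduce everything to the universal property. Both composition maps $\circ$ were constructed (see \eqref{sr2.4qe} and the discussion following it) as the unique coalgebra maps through which a certain ``composite measuring'' factors; likewise each $\tau$ was characterized in Theorem~\ref{Th28y} as the unique coalgebra map induced by applying Proposition~\ref{rTh2.3e} to the universal measuring $\Phi_c$. Hence, by uniqueness in these universal properties, it suffices to check that the two legs of the square, when post-composed with the universal measuring $\Phi_c:\mathcal M_c((HH_\bullet(A),sh),(HH_\bullet(D),sh))\to Hom_K(HH_\bullet(A),HH_\bullet(D))$, give the same measuring $\mathcal M_c(A,B)\otimes\mathcal M_c(B,D)\to Hom_K(HH_\bullet(A),HH_\bullet(D))$. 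Both sides unwind to the map sending $c\otimes c'$ to the composite $\widetilde{\Phi_c(c)}\circ\widetilde{\Phi_c(c')}$ of induced maps on Hochschild homology, so the check becomes the bookkeeping identity $\widetilde{\psi'\circ\psi} = \tilde\psi'\circ\tilde\psi$ at the level of the operators $\tilde\phi(c)$ of Proposition~\ref{Prp2.4}.

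So the technical heart is the following functoriality-of-$\tilde{(-)}$ statement, which I would isolate as a lemma: if $(C,\phi)$ is a cocommutative measuring from $A$ to $B$ and $(C',\phi')$ one from $B$ to $D$, and $\phi''=\circ\circ(\phi\otimes\phi'):C\otimes C'\to Hom_K(A,D)$ is their composite, then on Hochschild complexes $\widetilde{\phi''}(c\otimes c') = \tilde\phi'(c')\circ\tilde\phi(c)$ for all $c\in C$, $c'\in C'$ (and the identity measuring $K\to Hom_K(A,A)$ induces the identity on $HH_\bullet(A)$). This is a direct computation from the defining formula $\tilde\phi(c)_p(a_0,\dots,a_p)=\sum(\phi(c_1)(a_0),\dots,\phi(c_{p+1})(a_p))$: applying $\tilde\phi'(c')$ afterwards inserts a second iterated coproduct, and one matches the result with $\widetilde{\phi''}(c\otimes c')$ using the coassociativity/compatibility of the iterated coproduct $\Delta^p$ on $C\otimes C'$ together with cocommutativity (to rearrange the $c_i$'s and $c'_j$'s), exactly as in the permutation arguments already used in Propositions~\ref{Prp2.4} and~\ref{rTh2.3e}. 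Passing to homology then gives $\widetilde{\phi''}(c\otimes c')=\tilde\phi'(c')\circ\tilde\phi(c)$ on $HH_\bullet$, and the unit case is immediate.

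The main obstacle is essentially notational rather than conceptual: keeping straight the indices of the iterated coproducts on $C$, $C'$ and $C\otimes C'$ when the two measurings are composed, and confirming that the ``shuffle-compatible'' structure of Proposition~\ref{rTh2.3e} is preserved under this composition — i.e. that the composite measuring on Hochschild homology is again compatible with the shuffle products, so that it legitimately factors through $\mathcal M_c((HH_\bullet(A),sh),(HH_\bullet(D),sh))$ rather than merely through $\mathcal M_c(HH_\bullet(A),HH_\bullet(D))$ without the ring structure. Once the lemma above is in hand, the commutativity of the square and of the unit triangle follow formally from the uniqueness clauses in the universal properties of $\mathcal M_c(-,-)$, and we conclude that the family $\{\tau(A,B)\}$ is a coalgebra-enriched functor $cALG_K\longrightarrow\widetilde{cALG}_K$.
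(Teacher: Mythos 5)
Your proposal is correct and follows essentially the same route as the paper: reduce to the universal property of $\mathcal M_c((HH_\bullet(A),sh),(HH_\bullet(D),sh))$, check equality of the two legs after post-composing with the universal measuring by an explicit chain-level computation showing the composite measuring induces the composite of the induced maps, and handle the unit via $\Delta^p(1)=1\otimes\cdots\otimes 1$ in $K$. (Minor remark: the key identity $\widetilde{\phi''}(c\otimes c')=\tilde\phi'(c')\circ\tilde\phi(c)$ needs only the tensor-coalgebra structure of the iterated coproduct on $C\otimes C'$, not cocommutativity, which the paper uses only through the fact that $\circ$ is a coalgebra map.)
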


\begin{proof}
Let $A$, $B$, $D$ be commutative $K$-algebras. We need to verify that the following diagram  is commutative
\begin{equation}\label{2.6rw}
\begin{CD}
\mathcal M_c(A,B)\otimes \mathcal M_c(B,D)@>\circ >> \mathcal M_c(A,D)\\
@V\tau(A,B)\otimes \tau(B,D)VV @V\tau(A,D)VV \\
\mathcal M_c(HH_\bullet(A),HH_\bullet(B))\otimes \mathcal M_c(HH_\bullet(B),HH_\bullet(D))@>\circ >> \mathcal M_c(HH_\bullet(A),HH_\bullet(D))
\end{CD}
\end{equation} Due to the universal property of $\mathcal M_c(HH_\bullet(A),HH_\bullet(D))$, in order to show that \eqref{2.6rw} commutes, it suffices to 
show that the following two compositions are equal
\begin{equation}\label{2.7rw}
\begin{array}{ccc}
\begin{CD}
\mathcal M_c(A,B)\otimes \mathcal M_c(B,D) \\ @V\circ VV  \\ \mathcal M_c(A,D)  \\ @V\tau(A,D)VV \\  \mathcal M_c(HH_\bullet(A),HH_\bullet(D)) \\ @V\Phi_c(HH_\bullet(A),HH_\bullet(D))VV \\
Hom_K(HH_\bullet(A),HH_\bullet(D))\\
\end{CD} & \qquad &
\begin{CD}
\mathcal M_c(A,B)\otimes \mathcal M_c(B,D) \\
@V\tau(A,B)\otimes \tau(B,D)VV \\
\mathcal M_c(HH_\bullet(A),HH_\bullet(B))\otimes \mathcal M_c(HH_\bullet(B),HH_\bullet(D))\\
@V\circ VV \\
 \mathcal M_c(HH_\bullet(A),HH_\bullet(D))\\
 @V\Phi_c(HH_\bullet(A),HH_\bullet(D))VV \\
Hom_K(HH_\bullet(A),HH_\bullet(D))\\
\end{CD} \\
\end{array}
\end{equation} For the sake of convenience, we denote the left vertical composition in \eqref{2.7rw} by $\phi_1$ and the right vertical composition by $\phi_2$. 

\smallskip
For $x\in \mathcal M_c(A,B)$, $y\in \mathcal M_c(B,D)$ and $(a_0,a_1,...,a_p)\in C_p(A)$, we see that
\begin{equation}\label{2.8rw}
\phi_1(x\otimes y)(a_0,...,a_p)=
(y\circ x)\cdot (a_0,...,a_p) = ((y\circ x)_1\cdot a_0,(y\circ x)_2\cdot a_1,...,(y\circ x)_{p+1}\cdot a_p)
\end{equation} Since $\circ :\mathcal M_c(A,B)\otimes \mathcal M_c(B,D)\longrightarrow \mathcal M_c(A,D)$
is a map of coalgebras, we have $\Delta^p(y\circ x)=\sum (y_1\circ x_1)\otimes ... \otimes (y_{p+1}\circ x_{p+1})$. Combining this with
\eqref{2.8rw}, we have
\begin{equation}\label{2.9rw}
\phi_1(x\otimes y)(a_0,...,a_p)=\sum (y_1\cdot (x_1\cdot a_0),...,y_{p+1}\cdot (x_{p+1}\cdot a_p))
\end{equation} We now note that the following diagram is commutative
\begin{equation}\label{2.10rw}\small 
\begin{CD}
\mathcal M_c(A,B)\otimes \mathcal M_c(B,D) @>\circ (\tau(A,B)\otimes \tau(B,D)) >>  \mathcal M_c(HH_\bullet(A),HH_\bullet(D))\\
@VV\tilde{\Phi}_c(HH_\bullet(A),HH_\bullet(B))\otimes \tilde{\Phi}_c(HH_\bullet(B),HH_\bullet(D))V @V\Phi_c(HH(A),HH(D))VV \\
Hom_K(HH_\bullet(A),HH_\bullet(B))\otimes Hom_K(HH_\bullet(B),HH_\bullet(D)) @>\circ >> Hom_K(HH_\bullet(A),HH_\bullet(D))\\
\end{CD}
\end{equation} Using \eqref{2.10rw}, it follows that the right vertical composition $\phi_2$ in \eqref{2.7rw} may be expressed as
\begin{equation}\label{2.11rw}
\begin{CD}
\mathcal M_c(A,B)\otimes \mathcal M_c(B,D)  \\
@VV\tilde{\Phi}_c(HH_\bullet(A),HH_\bullet(B))\otimes \tilde{\Phi}_c(HH_\bullet(B),HH_\bullet(D))V \\
Hom_K(HH_\bullet(A),HH_\bullet(B))\otimes Hom_K(HH_\bullet(B),HH_\bullet(D)) \\
@VV\circ V \\
 Hom_K(HH_\bullet(A),HH_\bullet(D))\\
\end{CD}
\end{equation} For $x\in \mathcal M_c(A,B)$, $y\in \mathcal M_c(B,D)$ and $(a_0,a_1,...,a_p)\in C_p(A)$, we now see that
\begin{equation}\label{2.12rw}
\begin{array}{ll}
\phi_2(x\otimes y)(a_0,...,a_p)&=
(\tilde{\Phi}_c(HH_\bullet(B),HH_\bullet(D))(y)\circ \tilde{\Phi}_c(HH_\bullet(A),HH_\bullet(B))(x))(a_0,...,a_p)
\\
& = \sum (y_1\cdot (x_1\cdot a_0),...,y_{p+1}\cdot (x_{p+1}\cdot a_p))\\
\end{array}
\end{equation} From \eqref{2.9rw} and \eqref{2.12rw}, it follows that the two compositions $\phi_1$ and $\phi_2$ in \eqref{2.7rw} are equal 
and hence the diagram \eqref{2.6rw} commutes. Finally, since $\Delta^p(1)=\underset{\mbox{\tiny $(p+1)$-times}}{\underbrace{1\otimes 1\otimes ...\otimes 1}}$ in the 
coalgebra $K$, it follows that the composition
\begin{equation}
\begin{CD}
K@>>> \mathcal M_c(A,A)@>\tau(A,A)>> \mathcal M_c(HH(A),HH(A))\\
\end{CD}
\end{equation} coincides with the canonical map $K\longrightarrow  \mathcal M_c(HH(A),HH(A))$. The result follows.
\end{proof}

\begin{cor} For a commutative $K$-algebra $A$, the canonical morphism  
\begin{equation*}\tau(A,A):\mathcal M_c(A,A)\longrightarrow
\mathcal M_c((HH_\bullet(A),sh),(HH_\bullet(A),sh))
\end{equation*} is a morphism of bialgebras.
\end{cor}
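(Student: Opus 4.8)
The plan is to equip both the source and the target of $\tau(A,A)$ with bialgebra structures and then to observe that $\tau(A,A)$ is compatible with them, this compatibility being essentially Theorem \ref{Theorem2.5ft} specialized to the case in which all three algebras coincide.

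First I would record the general principle underlying \eqref{sr2.4qe}: for any object $X$ of a category enriched over the symmetric monoidal category of cocommutative $K$-coalgebras, the hom-object from $X$ to $X$ is a monoid in that category, and a monoid in cocommutative $K$-coalgebras is precisely a cocommutative $K$-bialgebra. Here the underlying coalgebra is the given hom-coalgebra, the multiplication is the composition morphism $\circ$, and the unit is the canonical coalgebra map from $K$; that $\circ$ and this unit are morphisms of cocommutative coalgebras, and that they satisfy the associativity and unit axioms, is exactly what is recorded in the paragraph surrounding \eqref{sr2.4qe}. Applying this to the object $A$ of $cALG_K$ gives the bialgebra structure on $\mathcal M_c(A,A)$, and applying it to the object $A$ of $\widetilde{cALG}_K$ gives the bialgebra structure on $\mathcal M_c((HH_\bullet(A),sh),(HH_\bullet(A),sh))$.

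Next, $\tau(A,A)$ is already a morphism of cocommutative coalgebras by Theorem \ref{Th28y}, so it remains only to check that it is a morphism of algebras. Specializing the commutative square \eqref{2.6rw} from the proof of Theorem \ref{Theorem2.5ft} to the case $A=B=D$ shows that $\tau(A,A)$ intertwines the composition product on $\mathcal M_c(A,A)$ with the composition product on $\mathcal M_c((HH_\bullet(A),sh),(HH_\bullet(A),sh))$, \ie $\tau(A,A)$ is multiplicative; and the last displayed diagram in that proof, again specialized to $A=B=D$, shows that the composite of the unit $K\longrightarrow\mathcal M_c(A,A)$ with $\tau(A,A)$ is the canonical unit of $\mathcal M_c((HH_\bullet(A),sh),(HH_\bullet(A),sh))$, \ie $\tau(A,A)$ preserves units. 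Combining this with coalgebra-compatibility, $\tau(A,A)$ is a morphism of bialgebras.

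There is no genuine obstacle here: the corollary is a repackaging of material already in hand. The only point requiring a little attention is that the two structures on $\mathcal M_c(A,A)$ --- the intrinsic coalgebra structure and the algebra structure coming from $\circ$ --- really fit together into a bialgebra, rather than sitting on the same space independently; but this is precisely the assertion, made in the discussion around \eqref{sr2.4qe}, that $\circ$ is a morphism of cocommutative coalgebras and that $K\longrightarrow\mathcal M_c(A,A)$ is a coalgebra map. If a fully self-contained treatment were desired one could instead derive the bialgebra axioms directly from the universal property of $\mathcal M_c$, but this would only reproduce the enriched-category bookkeeping already used above.
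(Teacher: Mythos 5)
Your proposal is correct and follows the same route as the paper: the paper's proof is precisely the observation that the claim follows from Theorem \ref{Theorem2.5ft} by setting $A=B=D$, and your argument simply spells out the details (the bialgebra structures coming from the enrichment via $\circ$ and the unit $K\longrightarrow\mathcal M_c(A,A)$, multiplicativity from the square \eqref{2.6rw}, unit-preservation from the final diagram in that proof, and coalgebra-compatibility from Theorem \ref{Th28y}). No gaps; your elaboration is a faithful unpacking of the one-line proof in the paper.
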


\begin{proof} This follows directly from Theorem \ref{Theorem2.5ft} by setting $A=B=D$. 

\end{proof}

\section{Measurings and Lie algebra homology}

Let  $\mathfrak{g}$ be a Lie algebra over $K$. Then,   the classical Chevalley-Eilenberg complex $CE_\bullet(\mathfrak{g})$ of $\mathfrak g$ is defined as follows (see, for instance, \cite[Chapter 4.2.3]{Dale}): 
\begin{equation}\label{CEcomp}
CE_\bullet(\mathfrak{g}):\qquad \wedge^3\mathfrak g\longrightarrow \wedge^2\mathfrak g\longrightarrow\wedge^1\mathfrak g{\longrightarrow} K 
\end{equation}
with differential given by
\begin{equation}\label{cediff}
\begin{array}{c}
d_{CE}: CE_n(\mathfrak g)\longrightarrow CE_{n-1}(\mathfrak g)\\ \\
d_{CE}(x_1\wedge ... \wedge x_n):=\underset{1\leq i< j\leq n}{\sum} (-1)^{i+j+1}[x_i,x_j]\wedge x_1\wedge ... \wedge \hat{x}_i
\wedge ...\wedge \hat{x}_j\wedge ... \wedge x_n\\
\end{array} 
\end{equation} where $\hat{x}_i$ refers to $x_i$ removed from the sequence. For Lie algebras, we know that the homology
$H_\bullet(\mathfrak g)$ of the Chevalley-Eilenberg complex is very similar to the Hochschild theory for algebras (see, for instance, \cite[$\S$   1.1.13, 10.1.5]{Loday}) We will now introduce the notion of a coalgebra measuring between Lie algebras over $K$. 

\begin{defn}
Let $(\mathfrak g,[.,.])$ and $(\mathfrak g',[.,.]')$ be  Lie algebras over $K$. A coalgebra measuring of Lie algebras from $\mathfrak g$ to $\mathfrak g'$  consists of a coalgebra $C$ and a map $\phi: C \longrightarrow Hom_K(\mathfrak g, \mathfrak g')$ such that 
$$ \phi(c)([a, b])= \sum [\phi(c_1)(a), \phi(c_2)(b)]'$$ for all $c\in C$ and $a$, $b\in \mathfrak g$. 
If $C$ is cocommutative, we will say that the above is a cocommutative measuring of Lie algebras. 
\end{defn}

 \smallskip
Unless otherwise mentioned, we will only work with measurings $(C,\phi)$ of Lie algebras where the coalgebra $C$ is cocommutative.   We will now show that a cocommutative
measuring $\phi: C\longrightarrow Hom_K(\mathfrak g, \mathfrak g')$ of Lie algebras leads to a morphism $\hat\phi: C\longrightarrow Hom_K(H_\bullet(\mathfrak g), H_\bullet(\mathfrak g'))$.

\begin{rem}\emph{It is natural to ask whether we can construct universal objects for measurings between Lie algebras over a field $K$, similar to the universal measuring coalgebra for ordinary algebras. In fact, we will carry out this construction in Section 5, working in the full generality of algebras over an operad.}
\end{rem}

\begin{prop}\label{P414.1} Let $(\mathfrak g,[.,.])$, $(\mathfrak g',[.,.]')$ be Lie algebras over $K$  and let  $(CE_\bullet(\mathfrak g), d_{CE})$,  $(CE_\bullet(\mathfrak g'), d'_{CE})$ be their respective Chevalley-Eilenberg complexes. If $(C, \phi)$ is a cocommutative measuring from $\mathfrak g$ to $\mathfrak g'$, then for each $c\in C$, we have a  morphism $\hat \phi(c): (CE_\bullet(\mathfrak g), d_{CE})\longrightarrow(CE_\bullet(\mathfrak g'), d'_{CE})$
of complexes given by:
\begin{equation} \label{eq414.13}
\hat\phi(c)_p:CE_p(\mathfrak g)\longrightarrow CE_p(\mathfrak g')\qquad \hat\phi(c)_p(x_1\wedge ...\wedge x_p)=c\cdot (x_1\wedge ...\wedge x_p):= \sum (\phi(c_1)x_1\wedge ...\wedge \phi(c_{p})x_p),
\end{equation}

where $c_1, \ldots, c_{p}$ are given by the iterated coproduct  $\Delta^{p-1}(c)= \sum c_1 \otimes \cdots \otimes c_{p}$. 

\end{prop}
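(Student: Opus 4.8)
The plan is to verify directly that $\hat\phi(c)$ commutes with the Chevalley--Eilenberg differential $d_{CE}$, which suffices since a chain map is exactly a degree-preserving family commuting with the differentials. So I would fix $c \in C$ and $x_1 \wedge \cdots \wedge x_p \in CE_p(\mathfrak g)$, and compute $d'_{CE}\big(\hat\phi(c)_p(x_1\wedge\cdots\wedge x_p)\big)$ and $\hat\phi(c)_{p-1}\big(d_{CE}(x_1\wedge\cdots\wedge x_p)\big)$ separately, then match them term by term.

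First I would expand the left-hand side: $\hat\phi(c)_p(x_1\wedge\cdots\wedge x_p) = \sum (\phi(c_1)x_1 \wedge \cdots \wedge \phi(c_p)x_p)$ using $\Delta^{p-1}(c) = \sum c_1 \otimes \cdots \otimes c_p$, and then apply $d'_{CE}$, which produces $\sum_{i<j} (-1)^{i+j+1} [\phi(c_i)x_i, \phi(c_j)x_j]' \wedge \phi(c_1)x_1 \wedge \cdots \widehat{\phi(c_i)x_i} \cdots \widehat{\phi(c_j)x_j} \cdots \wedge \phi(c_p)x_p$ (with the appropriate summation over the components of the iterated coproduct). On the right-hand side, $d_{CE}(x_1\wedge\cdots\wedge x_p) = \sum_{i<j}(-1)^{i+j+1}[x_i,x_j]\wedge x_1 \wedge \cdots \hat x_i \cdots \hat x_j \cdots \wedge x_p$, which lies in $CE_{p-1}(\mathfrak g)$; applying $\hat\phi(c)_{p-1}$ to the $(i,j)$-summand uses $\Delta^{p-2}(c)$ and, crucially, the measuring identity $\phi(c)([x_i,x_j]) = \sum [\phi(c_1)x_i, \phi(c_2)x_j]'$ to turn the bracket slot into a bracket of images. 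The key combinatorial point is that after expanding both sides, each $(i,j)$-term on one side corresponds to an $(i,j)$-term on the other, provided one can freely reindex which components of the iterated coproduct of $c$ get attached to which wedge factor. This is exactly where cocommutativity of $C$ enters: for any $\sigma \in S_p$ (or $S_{p-1}$ on the right after the bracket is formed), $\sum c_1 \otimes \cdots \otimes c_p = \sum c_{\sigma(1)} \otimes \cdots \otimes c_{\sigma(p)}$, so the assignment of coproduct components to the reordered wedge factors (bracket slot first, then the remaining $x_k$'s in order) matches the assignment on the other side. One also uses coassociativity to reconcile $\Delta^{p-1}$ applied to $c$ followed by merging two components via the bracket's own coproduct $\Delta$ with $\Delta^{p-2}$ applied directly, i.e. $(\mathrm{id}^{\otimes i-1}\otimes \Delta \otimes \mathrm{id}^{\otimes p-i-1})\Delta^{p-2} = \Delta^{p-1}$ up to the relevant identifications.

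I expect the main obstacle to be bookkeeping rather than conceptual: tracking signs and indices through the removal of two wedge factors and the simultaneous reindexing of the $p$ (resp. $p-1$) tensor components of $c$, and making sure the cocommutativity swap is applied to the correct permutation. A clean way to organize this is to first record, as a lemma-style observation (as the authors do in the Hochschild case), that cocommutativity gives $c \cdot (\sigma \cdot \omega) = \sigma \cdot (c \cdot \omega)$ for $\omega \in CE_p(\mathfrak g)$ and $\sigma \in S_p$ acting by permuting wedge factors — here the sign of the wedge permutation is carried along identically on both sides, so it does not obstruct anything — and then the term-by-term matching of the two expansions is forced. Since the statement is part (a) of a theorem whose parts (b), (c) concern universal enveloping algebras, no further structure is needed here; the induced map on homology then follows formally from $\hat\phi(c)$ being a chain map.
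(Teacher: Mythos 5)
Your proposal is correct and follows essentially the same route as the paper: a direct verification that $\hat\phi(c)$ commutes with the Chevalley--Eilenberg differentials, using the Lie measuring identity together with cocommutativity (and coassociativity) of $C$ to reassign the coproduct legs of $c$ to the bracket slot and the remaining wedge factors. Your equivariance observation $c\cdot(\sigma\cdot\omega)=\mathrm{sgn}$-compatibly $\sigma\cdot(c\cdot\omega)$ is exactly the computation the paper records first to show that $\hat\phi(c)_p$ is well defined on $\wedge^p\mathfrak g$, so just be sure to state it as that well-definedness check before using the formula.
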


\begin{proof}
We first claim that $c\cdot (x_1\wedge ...\wedge x_p)$ as defined in \eqref{eq414.13} is well defined. Indeed, for any $\sigma \in S_p$, we know that
\begin{equation}
x_1\wedge ...\wedge x_p={sgn(\sigma)}x_{\sigma(1)}\wedge ...\wedge x_{\sigma(p)}
\end{equation} in $CE_p(\mathfrak g)=\wedge^p\mathfrak g$. Since $C$ is cocommutative, we observe that
\begin{equation}
\begin{array}{ll}
c\cdot (x_1\wedge ...\wedge x_p)&= \sum (\phi(c_1)x_1\wedge ...\wedge \phi(c_{p})x_p)\\
&={sgn(\sigma)}\sum  (\phi(c_{\sigma(1)})x_{\sigma(1)}\wedge ...\wedge \phi(c_{\sigma(p)})x_{\sigma(p)})\\
&={sgn(\sigma)}\sum  (\phi(c_{1})x_{\sigma(1)}\wedge ...\wedge \phi(c_{p})x_{\sigma(p)})\\
&={sgn(\sigma)}c\cdot (x_{\sigma(1)}\wedge ...\wedge x_{\sigma(p)}) \\
\end{array}
\end{equation} We will now show that $d'_{CE}(c\cdot (x_1\wedge ...\wedge x_p))=c\cdot d_{CE}(x_1\wedge ...\wedge x_p)$.  We fix $1\leq i< j\leq p$. Using the cocommutativity of $C$ and the fact that $(C,\phi)$ is a measuring, we now have
\begin{equation*}\small
\begin{array}{l}
c\cdot ([x_i,x_j]\wedge x_1\wedge ... \wedge \hat{x}_i
\wedge ...\wedge \hat{x}_j\wedge ... \wedge x_p) \\ = \sum \phi(c_1)([x_i,x_j])\wedge \phi(c_2)(x_1)\wedge...\wedge \phi(c_i)(x_{i-1})\wedge \phi(c_{i+1})(x_{i+1}) \wedge ... \wedge \phi(c_{j-1})(x_{j-1})\wedge \phi(c_{j})(x_{j+1})\wedge...\wedge \phi(c_{p-1})(x_p)\\
=\sum ([c_1\cdot x_i,c_2\cdot x_j])\wedge c_3\cdot x_1\wedge...\wedge c_{i+1}\cdot x_{i-1}\wedge c_{i+2}\cdot x_{i+1}\wedge ... \wedge c_{j}\cdot x_{j-1}\wedge c_{j+1}\cdot x_{j+1}\wedge...\wedge c_{p}\cdot x_p\\
=\sum ([c_i\cdot x_i,c_j\cdot x_j])\wedge c_1\cdot x_1\wedge...\wedge c_{i-1}\cdot x_{i-1}\wedge c_{i+1}\cdot x_{i+1}\wedge ... \wedge c_{j-1}\cdot x_{j-1}\wedge c_{j+1}\cdot x_{j+1}\wedge...\wedge c_{p}\cdot x_p\\
\end{array}
\end{equation*} The result is now clear from the expression for the differential in \eqref{cediff}.

\end{proof}

From Proposition \ref{P414.1}, it is clear that each $\hat\phi(c)$  induces a morphism on homology groups that we continue to denote 
by $\hat\phi(c)$. This leads to a morphism \begin{equation}\hat\phi : C\longrightarrow Hom_K(H_\bullet(\mathfrak g),H_\bullet(\mathfrak g'))
\end{equation}
Now let $A$ be a $K$-algebra. For each $r\geq 1$, we  let $\mathcal M_r(A)$ denote the ring of $r\times r$-matrices with entries in $A$. There are canonical
inclusions $\mathcal M_r(A)\hookrightarrow \mathcal M_{r+1}(A)$ obtained by adding $0$ entries and we set $\mathcal M(A):=\underset{r\geq 1}{\varinjlim}\textrm{ }\mathcal M_r(A)$. We consider the Lie algebra $\mathfrak{gl}(A)$, which is identical to $\mathcal M(A)$ as a vector space, with
the Lie bracket determined by the standard commutator of matrices in $\mathcal M(A)$. We mention that by Loday-Quillen-Tsygan theorem (see \cite{LQ}, \cite{T}),
the Lie algebra homology of $\mathfrak{gl}(A)$ is isomorphic to the exterior algebra 
\begin{equation*}
H_\bullet(\mathfrak{gl}(A))\cong \bigwedge HC_{\bullet-1}(A)
\end{equation*}
on the shifted cyclic homology $HC_{\bullet-1}(A)$ of $A$.  We conclude this section by showing that  a cocommutative measuring $\phi: C\longrightarrow Hom_K(A,B)$
of algebras induces a morphism $\mathfrak{gl}(\phi): C\longrightarrow Hom_K(H_\bullet(\mathfrak{gl}(A)),H_\bullet(\mathfrak{gl}(B)))$.

\begin{prop}\label{P414.2} Let $A$, $B$ be $K$-algebras and let $\phi:C\longrightarrow Hom_K(A,B)$ be a cocommutative measuring. Then, there is an induced morphism $\mathfrak{gl}(\phi): C\longrightarrow Hom_K(H_\bullet(\mathfrak{gl}(A)),H_\bullet(\mathfrak{gl}(B)))$.

\end{prop}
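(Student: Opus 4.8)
The plan is to reduce everything to Proposition \ref{P414.1} by manufacturing, out of the algebra measuring $\phi$, a cocommutative measuring of the Lie algebras $\mathfrak{gl}(A)$ and $\mathfrak{gl}(B)$, and then feeding it into the Chevalley--Eilenberg machinery already built.

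First I would treat the finite matrix rings. Writing an element of $\mathcal M_r(A)=M_r(K)\otimes A$ as a matrix $(a_{ij})$ with entries in $A$, define $\tilde\phi_r:C\longrightarrow Hom_K(\mathcal M_r(A),\mathcal M_r(B))$ by applying $\phi(c)$ entrywise, $\tilde\phi_r(c)\big((a_{ij})\big)=\big(\phi(c)(a_{ij})\big)$. Using $(MN)_{ik}=\sum_j M_{ij}N_{jk}$, the measuring identity \eqref{eq1.1i} applied to each entrywise product, and the linearity of $\Delta$, one checks directly that $\tilde\phi_r(c)(MN)=\sum\tilde\phi_r(c_1)(M)\,\tilde\phi_r(c_2)(N)$ and $\tilde\phi_r(c)(I_r)=\epsilon(c)I_r$ (here $I_r=(\delta_{ij}1_A)$); hence $(C,\tilde\phi_r)$ is a measuring of algebras from $\mathcal M_r(A)$ to $\mathcal M_r(B)$. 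Note that cocommutativity plays no role in this step.

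Next I would pass to the colimit. The maps $\tilde\phi_r(c)$ are visibly compatible with the stabilization inclusions $\mathcal M_r(-)\hookrightarrow\mathcal M_{r+1}(-)$ that add a zero row and column, so they assemble into a $K$-linear map $\widetilde{\mathfrak{gl}}(\phi):C\longrightarrow Hom_K(\mathfrak{gl}(A),\mathfrak{gl}(B))$, where $\mathfrak{gl}(A)=\varinjlim_r\mathcal M_r(A)$; this is again a measuring of algebras, since both defining identities can be checked on elements coming from a fixed $\mathcal M_r$. Because $C$ is cocommutative, the observation recalled in Section 3 --- that a cocommutative measuring of associative algebras is automatically a measuring of the associated commutator Lie algebras --- shows that $(C,\widetilde{\mathfrak{gl}}(\phi))$ is a cocommutative measuring of Lie algebras from $\mathfrak{gl}(A)$ to $\mathfrak{gl}(B)$.

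Finally, I would apply Proposition \ref{P414.1} to the Lie algebra measuring $(C,\widetilde{\mathfrak{gl}}(\phi))$: for each $c\in C$ this yields a morphism of Chevalley--Eilenberg complexes $CE_\bullet(\mathfrak{gl}(A))\longrightarrow CE_\bullet(\mathfrak{gl}(B))$, and hence an induced map on homology; declaring $\mathfrak{gl}(\phi)(c)$ to be this induced map gives the required morphism $\mathfrak{gl}(\phi):C\longrightarrow Hom_K(H_\bullet(\mathfrak{gl}(A)),H_\bullet(\mathfrak{gl}(B)))$. Observe that working directly with the Lie algebra $\mathfrak{gl}(A)$ sidesteps any question about homology commuting with the filtered colimit. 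The only genuine content is the entrywise verification that $\tilde\phi_r$ is a measuring and the (routine) compatibility with the stabilization maps; this compatibility bookkeeping is the main, and only mildly annoying, obstacle, after which the statement follows by invoking results already in place.
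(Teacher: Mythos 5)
Your proposal is correct and follows essentially the same route as the paper: define the measuring entrywise on $\mathcal M_r(A)$, check compatibility with the stabilization inclusions to get a measuring on $\mathcal M(A)$, use cocommutativity to pass to the commutator Lie algebras $\mathfrak{gl}(A)$, $\mathfrak{gl}(B)$, and then invoke Proposition \ref{P414.1}. The only cosmetic difference is that you cite the general observation that cocommutative algebra measurings are Lie measurings, whereas the paper writes out the commutator computation explicitly.
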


\begin{proof}
We consider some $r\geq 1$ and a matrix $M=((a_{ij}))\in \mathcal M_r(A)$. It is easily verified that setting $c\cdot M=((c\cdot a_{ij}))$ defines a measuring
of algebras from $\mathcal M_r(A)$ to $\mathcal M_r(B)$. Further, these are compatible with inclusions of matrix rings, thus defining a measuring of algebras 
from $\mathcal M(A)$ to $\mathcal M(B)$.  Since $C$ is also cocommutative, we notice that for matrices $M_1$, $M_2\in \mathcal M(A)$, we have
\begin{equation}
\begin{array}{ll}
c\cdot [M_1,M_2]=c\cdot (M_1M_2-M_2M_1)&=\sum (c_1\cdot M_1)(c_2\cdot M_2)-\sum (c_1\cdot M_2)(c_2\cdot M_1)\\
&=\sum (c_1\cdot M_1)(c_2\cdot M_2)-\sum (c_2\cdot M_2)(c_1\cdot M_1)\\
&=\sum [c_1\cdot M_1,c_2\cdot M_2]\\
\end{array}
\end{equation} In other words, we have an induced measuring of Lie algebras from $\mathfrak{gl}(A)$ to $\mathfrak{gl}(B)$. The result now follows by applying
Proposition \ref{P414.1}. 
\end{proof}

 \section{Lie measurings and the universal enveloping algebra}
 
 As with measurings of algebras, a Lie measuring $\phi:C\longrightarrow Hom_K(\mathfrak g,\mathfrak g')$ may be viewed as a generalized
 morphism of Lie algebras from $\mathfrak g$ to $\mathfrak g'$. In this section, we will describe how Lie measurings are  related to algebra measurings of the universal
 enveloping algebra.  
 
 \begin{lem}\label{rL5.1}  Let $V$, $V'$ be vector spaces and let   $T(V)$, 
 $T(V')$ denote their respective tensor algebras.  Let $C$ be a coalgebra and   $\phi:C\longrightarrow Hom_K(V,V')$ be a linear map. Then, $\phi$  induces a  measuring 
 $T(\phi):C\longrightarrow Hom_K(T(V),T(V'))$ of algebras from $T(V)$ to $T(V')$. 
 
 \end{lem}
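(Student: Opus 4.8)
The plan is to define $T(\phi)$ on the homogeneous components of the tensor algebra by means of iterated coproducts, exactly in the spirit of Proposition \ref{Prp2.4}, and then verify the measuring axioms directly. Concretely, for $c\in C$ and a decomposable tensor $v_1\otimes\cdots\otimes v_n\in V^{\otimes n}\subseteq T(V)$, I would set
\[
T(\phi)(c)(v_1\otimes\cdots\otimes v_n):=\sum \phi(c_1)(v_1)\otimes\cdots\otimes\phi(c_n)(v_n),
\]
where $\Delta^{n-1}(c)=\sum c_1\otimes\cdots\otimes c_n$, together with $T(\phi)(c)(1_{T(V)})=\epsilon(c)1_{T(V')}$ on the degree-zero part $K$. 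Each map $\phi(c_i)\colon V\to V'$ extends linearly to the tensor powers, so this is well defined on $V^{\otimes n}$ for each $n$; assembling over all $n$ gives a linear map $T(\phi)(c)\colon T(V)\to T(V')$, and letting $c$ vary gives the linear map $T(\phi)\colon C\to Hom_K(T(V),T(V'))$.

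Next I would check the two measuring axioms of \eqref{eq1.1i}. The counit condition is immediate from the definition on the degree-zero component. For the multiplicative condition, it suffices by bilinearity to take two decomposable tensors $u=v_1\otimes\cdots\otimes v_m$ and $w=w_1\otimes\cdots\otimes w_n$, whose product in $T(V)$ is the concatenation $v_1\otimes\cdots\otimes v_m\otimes w_1\otimes\cdots\otimes w_n$. Applying $T(\phi)(c)$ to this product uses $\Delta^{m+n-1}(c)=\sum c_1\otimes\cdots\otimes c_{m+n}$, and the claim
\[
T(\phi)(c)(uw)=\sum T(\phi)(c_1)(u)\otimes T(\phi)(c_2)(w)
\]
follows from the coassociativity identity $\Delta^{m+n-1}=(\Delta^{m-1}\otimes\Delta^{n-1})\circ\Delta$, which splits $\sum c_1\otimes\cdots\otimes c_{m+n}$ as $\sum (c_1)_1\otimes\cdots\otimes(c_1)_m\otimes(c_2)_1\otimes\cdots\otimes(c_2)_n$. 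Matching the two sides then reduces to the bookkeeping of the Sweedler indices, with no cocommutativity needed. (One should note that, unlike in Proposition \ref{Prp2.4}, cocommutativity of $C$ is not required here, since the tensor algebra has no relations to respect — the only subtlety is that $T(\phi)(c)$ need not be well defined modulo any symmetrization.)

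The main point requiring care, and the only genuine obstacle, is the compatibility of the degreewise definitions with coassociativity: one must be sure that the formula on $V^{\otimes(m+n)}$ obtained by restricting from $T(V)$ agrees with the one produced by multiplying the restrictions on $V^{\otimes m}$ and $V^{\otimes n}$. This is exactly the coassociativity bookkeeping indicated above, so once the iterated-coproduct conventions $\Delta^0=\mathrm{id}$, $\Delta^{k}=(\Delta\otimes\mathrm{id}^{\otimes(k-1)})\circ\Delta^{k-1}$ are fixed, the verification is routine. I would close by remarking that this lemma is the linear-algebra input needed for the universal enveloping algebra statement of Theorem 1.3(b): since $U(\mathfrak g)=T(\mathfrak g)/(x\otimes y-y\otimes x-[x,y])$, it will remain only to check that $T(\phi)$ descends to the quotient, which is precisely where the Lie-measuring hypothesis on $\phi$ and cocommutativity of $C$ enter.
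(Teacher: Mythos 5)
Your proposal is correct and follows essentially the same route as the paper: define $T(\phi)(c)$ on $1$ and on decomposable tensors via the iterated coproduct, and deduce the measuring identity for concatenation products from coassociativity of $C$ (the paper leaves this bookkeeping implicit, while you spell it out, including the correct observation that no cocommutativity is needed since $T(V)$ has no relations to respect).
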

 
 \begin{proof}
 By definition, an element of $T(V)$ is a $K$-linear combination of elements of the form
 \begin{equation}
 1\in K \qquad \mbox{and} \qquad x_1\otimes ... \otimes x_p \in V^{\otimes p}, \textrm{ }p\geq 1
 \end{equation}  For $c\in C$, we now define a $K$-linear map $T(\phi)(c):T(V)\longrightarrow T(V')$ by setting
 \begin{equation}
 T(\phi)(c)(1)=\epsilon(c)\cdot 1 \qquad T(\phi)(c)(x_1\otimes ... \otimes x_p)=\sum \phi(c_1)(x_1)\otimes ... \otimes \phi(c_p)(x_p) 
 \end{equation} From the coassociativity of the coalgebra $C$, it is now evident that the above defines a measuring of algebras
 from $T(V)$ to $T(V')$. 
 \end{proof}
 
 \begin{Thm}\label{rP5.2} Let $(\mathfrak g,[.,.])$, $(\mathfrak g',[.,.]')$ be Lie algebras and let   $U(\mathfrak g)$, 
 $U(\mathfrak g')$ denote their respective universal enveloping algebras.  Let $\phi:C\longrightarrow Hom_K(\mathfrak g,\mathfrak g')$ be a cocommutative measuring of Lie algebras. Then,
 there is an induced measuring $U(\phi):C\longrightarrow Hom_K(U(\mathfrak g),U(\mathfrak g'))$ of algebras from 
 $U(\mathfrak g)$ to $U(\mathfrak g')$. 
 \end{Thm}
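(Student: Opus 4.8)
The plan is to present $U(\mathfrak g)$ as a quotient of the tensor algebra $T(\mathfrak g)$ and to push the measuring supplied by Lemma \ref{rL5.1} through this quotient. Recall that $U(\mathfrak g)=T(\mathfrak g)/I_{\mathfrak g}$, where $I_{\mathfrak g}\subseteq T(\mathfrak g)$ is the two-sided ideal generated by the elements $x\otimes y-y\otimes x-[x,y]$ with $x,y\in\mathfrak g$ (here $[x,y]$ is regarded as an element of $\mathfrak g=T(\mathfrak g)_1$), and similarly $U(\mathfrak g')=T(\mathfrak g')/I_{\mathfrak g'}$; write $q\colon T(\mathfrak g)\to U(\mathfrak g)$ and $q'\colon T(\mathfrak g')\to U(\mathfrak g')$ for the canonical surjections. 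By Lemma \ref{rL5.1}, $\phi$ induces a measuring of algebras $T(\phi)\colon C\to Hom_K(T(\mathfrak g),T(\mathfrak g'))$. Postcomposing on the target with the algebra homomorphism $q'$ yields $\bar\phi\colon C\to Hom_K(T(\mathfrak g),U(\mathfrak g'))$, $\bar\phi(c)=q'\circ T(\phi)(c)$, which is again a measuring of algebras from $T(\mathfrak g)$ to $U(\mathfrak g')$ (postcomposition of a measuring with an algebra homomorphism is readily checked to be a measuring).

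First I would verify that $\bar\phi(c)$ kills each generator $r=x\otimes y-y\otimes x-[x,y]$ of $I_{\mathfrak g}$, for every $c\in C$. Writing the multiplication and the elements of $\mathfrak g'$ inside $U(\mathfrak g')$, Lemma \ref{rL5.1} gives $\bar\phi(c)(x\otimes y)=\sum\phi(c_1)(x)\,\phi(c_2)(y)$ and $\bar\phi(c)(y\otimes x)=\sum\phi(c_1)(y)\,\phi(c_2)(x)$, while the Lie-measuring identity together with the fact that in $U(\mathfrak g')$ the bracket of elements of $\mathfrak g'$ is their commutator gives $\bar\phi(c)([x,y])=q'\big(\phi(c)([x,y])\big)=\sum[\,\phi(c_1)(x),\phi(c_2)(y)\,]'=\sum\big(\phi(c_1)(x)\,\phi(c_2)(y)-\phi(c_2)(y)\,\phi(c_1)(x)\big)$. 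Subtracting, $\bar\phi(c)(r)=\sum\phi(c_2)(y)\,\phi(c_1)(x)-\sum\phi(c_1)(y)\,\phi(c_2)(x)$, which vanishes once one uses cocommutativity of $C$ to rewrite $\sum c_1\otimes c_2=\sum c_2\otimes c_1$. This is the only point where cocommutativity enters.

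Next I would upgrade this to the statement that $\bar\phi(c)$ kills all of $I_{\mathfrak g}$, for every $c$. Set $J:=\{\,w\in T(\mathfrak g):\bar\phi(c)(w)=0\text{ for all }c\in C\,\}$, a $K$-subspace containing every generator of $I_{\mathfrak g}$ by the previous step. For $a\in T(\mathfrak g)$ and $w\in J$, the measuring property gives $\bar\phi(c)(a\otimes w)=\sum\bar\phi(c_1)(a)\,\bar\phi(c_2)(w)$; since the linear map $c''\mapsto\bar\phi(c'')(w)$ is identically zero, the bilinear map $(c',c'')\mapsto\bar\phi(c')(a)\,\bar\phi(c'')(w)$ is identically zero, so $\bar\phi(c)(a\otimes w)=0$, and likewise $w\otimes a\in J$. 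Hence $J$ is a two-sided ideal of $T(\mathfrak g)$ containing the generators of $I_{\mathfrak g}$, so $I_{\mathfrak g}\subseteq J$. The main obstacle is precisely this step: a single $\bar\phi(c)$ is not an algebra homomorphism, so vanishing on the generators of $I_{\mathfrak g}$ does not by itself force vanishing on the ideal; the resolution is that the vanishing holds for all $c$ simultaneously, and then the measuring identity rewrites $\bar\phi(c)$ on a product through a multilinear expression in the $c_i$'s that collapses to zero.

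Finally, since $\bar\phi(c)$ vanishes on $I_{\mathfrak g}=\ker q$, it factors uniquely as $\bar\phi(c)=U(\phi)(c)\circ q$ for a $K$-linear map $U(\phi)(c)\colon U(\mathfrak g)\to U(\mathfrak g')$, explicitly $U(\phi)(c)(\overline{x_1\cdots x_p})=\sum\phi(c_1)(x_1)\cdots\phi(c_p)(x_p)$; the assignment $c\mapsto U(\phi)(c)$ is $K$-linear. That $U(\phi)$ is a measuring follows by lifting: for $\bar a,\bar b\in U(\mathfrak g)$ pick preimages $a,b\in T(\mathfrak g)$, so $U(\phi)(c)(\bar a\bar b)=\bar\phi(c)(a\otimes b)=\sum\bar\phi(c_1)(a)\,\bar\phi(c_2)(b)=\sum U(\phi)(c_1)(\bar a)\,U(\phi)(c_2)(\bar b)$, and $U(\phi)(c)(1_{U(\mathfrak g)})=\bar\phi(c)(1)=\epsilon(c)\,1_{U(\mathfrak g')}$, which completes the argument.
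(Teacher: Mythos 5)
Your proposal is correct and follows essentially the same route as the paper: apply Lemma \ref{rL5.1}, compose with the projection $T(\mathfrak g')\longrightarrow U(\mathfrak g')$, kill the generators $x\otimes y-y\otimes x-[x,y]$ using cocommutativity and the Lie-measuring identity, and then use the measuring identity (together with the fact that the vanishing holds for \emph{all} $c$ simultaneously) to conclude vanishing on the whole two-sided ideal before descending to $U(\mathfrak g)$. Your explicit remark that a single $\bar\phi(c)$ is not an algebra map, and your final verification that the induced map is again a measuring, merely spell out steps the paper leaves implicit.
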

 
 \begin{proof} Applying Lemma \ref{rL5.1}, we already know that there is a measuring $T(\phi):C\longrightarrow Hom_K(T(\mathfrak g),T(\mathfrak g'))$ of algebras from $T(\mathfrak g)$ to $T(\mathfrak g')$. Composing with the algebra map $T(\mathfrak g')\longrightarrow U(\mathfrak g')$,
 we obtain a measuring from $T(\mathfrak g)$ to $U(\mathfrak g')$ that we continue to denote by $T(\phi):C\longrightarrow Hom_K(T(\mathfrak g),U(\mathfrak g'))$. We claim that any element $z\in T(\mathfrak g)$ of the form
 \begin{equation}\label{tensuniv}
z= x\otimes y - y\otimes x -[x,y]
 \end{equation} with $x$, $y\in \mathfrak g$ satisfies $T(\phi)(c)(z)=0\in U(\mathfrak g')$ for each $c\in C$. Indeed, using the cocommutativity of $C$
 and the fact that  $\phi:C\longrightarrow Hom_K(\mathfrak g,\mathfrak g')$ is a Lie measuring, we have 
 \begin{equation*}
 \begin{array}{ll}
 T(\phi)(c)(x\otimes y - y\otimes x -[x,y])&=\sum (\phi(c_1)(x)\otimes \phi(c_2)(y)-\phi(c_1)(y)\otimes \phi(c_2)(x)) - \phi(c)([x,y])\\
 &=\sum (\phi(c_1)(x)\otimes \phi(c_2)(y)-\phi(c_2)(y)\otimes \phi(c_1)(x)) - \sum [\phi(c_1)(x),\phi(c_2)(y)]'\\
 &=\sum (\phi(c_1)(x)\otimes \phi(c_2)(y)-\phi(c_2)(y)\otimes \phi(c_1)(x) -  [\phi(c_1)(x),\phi(c_2)(y)]')=0 \\
 \end{array}
 \end{equation*} From this, it now follows that if $z'zz''$ is an element of $T(\mathfrak g)$ with $z'$, $z''\in T(\mathfrak g)$ and 
 $z\in T(\mathfrak g)$ an element of the form \eqref{tensuniv}, we must have
 \begin{equation}\label{tens5.4}
 T(\phi)(c)(z'zz'')=\sum( T(\phi)(c_1)(z'))(T(\phi)(c_2)(z))(T(\phi)(c_3)(z''))=0\in U(\mathfrak g')
 \end{equation} for each $c\in C$. In other words, $T(\phi)(c)$ vanishes on the two sided ideal in $T(\mathfrak g)$ generated by elements
 of the form \eqref{tensuniv}. It is now clear that $T(\phi)$ induces a measuring of algebras from $U(\mathfrak g)$ to 
 $U(\mathfrak g')$.
 
 \end{proof}
 
  We now fix a cocommutative coalgebra $C$. For Lie algebras $\mathfrak g$, $\mathfrak g'$, we denote by $LMeas_C(\mathfrak g,\mathfrak g')$ the set of Lie algebra measurings
$C\longrightarrow Hom_K(\mathfrak g,\mathfrak g')$. For $K$-algebras $A$, $B$, we denote by $Meas_C(A,B)$ the set of algebra 
measurings $C\longrightarrow Hom_K(A,B)$.

\begin{Thm}\label{rP5.3} Let $\mathfrak g$ be a Lie algebra and let $A$ be a $K$-algebra. For a fixed cocommutative coalgebra $C$, we have a bijection
\begin{equation}
LMeas_C(\mathfrak g,A) \cong Meas_C(U(\mathfrak g),A)
\end{equation} Here, $A$ is treated as a Lie algebra by taking its standard commutator for multiplication and $U(\mathfrak g)$ is the universal enveloping algebra
of $\mathfrak g$. 

\end{Thm}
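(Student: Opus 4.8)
The plan is to promote the classical adjunction $Hom_{\mathrm{Lie}}(\mathfrak g,A)\cong Hom_{\mathrm{Alg}}(U(\mathfrak g),A)$ (with $A$ regarded as a Lie algebra under its commutator) to the level of measurings, using the canonical embedding $\iota:\mathfrak g\hookrightarrow U(\mathfrak g)$ in one direction and the constructions of Lemma \ref{rL5.1} and Theorem \ref{rP5.2} in the other. Throughout, cocommutativity of $C$ is used exactly where it was used in those results.

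\emph{From algebra measurings to Lie measurings.} Given $\phi\in Meas_C(U(\mathfrak g),A)$, define $\psi:C\longrightarrow Hom_K(\mathfrak g,A)$ by $\psi(c)(x)=\phi(c)(\iota(x))$, i.e.\ by precomposing with $\iota$. To see $\psi\in LMeas_C(\mathfrak g,A)$, fix $x,y\in\mathfrak g$ and $c\in C$; since $\iota([x,y])=\iota(x)\iota(y)-\iota(y)\iota(x)$ in $U(\mathfrak g)$, the measuring axiom for $\phi$ gives $\psi(c)([x,y])=\sum\phi(c_1)(\iota x)\phi(c_2)(\iota y)-\sum\phi(c_1)(\iota y)\phi(c_2)(\iota x)$, and cocommutativity of $C$ rewrites the second sum as $\sum\phi(c_2)(\iota y)\phi(c_1)(\iota x)$, so that $\psi(c)([x,y])=\sum[\psi(c_1)(x),\psi(c_2)(y)]$, which is the Lie-measuring identity.

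\emph{From Lie measurings to algebra measurings.} Given $\psi\in LMeas_C(\mathfrak g,A)$, apply Lemma \ref{rL5.1} with $V=\mathfrak g$ and $V'=A$ viewed merely as vector spaces to obtain an algebra measuring $T(\psi):C\longrightarrow Hom_K(T(\mathfrak g),T(A))$; composing afterwards with the multiplication algebra map $\mu:T(A)\longrightarrow A$, $a_1\otimes\cdots\otimes a_p\mapsto a_1\cdots a_p$, gives an algebra measuring $C\longrightarrow Hom_K(T(\mathfrak g),A)$, still written $T(\psi)$. Exactly as in the proof of Theorem \ref{rP5.2}, for $z=x\otimes y-y\otimes x-[x,y]\in T(\mathfrak g)$ (with $[x,y]\in\mathfrak g=T^1(\mathfrak g)$) we get $T(\psi)(c)(z)=\sum\psi(c_1)(x)\psi(c_2)(y)-\sum\psi(c_1)(y)\psi(c_2)(x)-\psi(c)([x,y])$, which vanishes in $A$ by cocommutativity together with $\psi(c)([x,y])=\sum[\psi(c_1)(x),\psi(c_2)(y)]$. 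The argument around \eqref{tens5.4} then shows $T(\psi)(c)$ annihilates the two-sided ideal generated by such $z$, so $T(\psi)$ descends to an algebra measuring $U(\psi):C\longrightarrow Hom_K(U(\mathfrak g),A)$.

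\emph{Mutual inverseness and the main obstacle.} Starting from $\psi$, forming $U(\psi)$ and then restricting along $\iota$ recovers $\psi$, since on $\iota(x)$ we have $U(\psi)(c)(\iota x)=(\mu\circ T(\psi)(c))(x)=\psi(c)(x)$ ($\mu$ being the identity on $T^1(A)=A$). Conversely, starting from $\phi$, restricting to $\psi$ and re-extending to $U(\psi)$ gives two algebra measurings $C\longrightarrow Hom_K(U(\mathfrak g),A)$ that agree on $1$ (value $\epsilon(c)\cdot 1$) and on $\iota(\mathfrak g)$; since $U(\mathfrak g)$ is generated as an algebra by $\iota(\mathfrak g)$ and the measuring axioms force the value of any measuring on a product $\iota(x_1)\cdots\iota(x_p)$ to be $\sum\psi(c_1)(x_1)\cdots\psi(c_p)(x_p)$ with $\Delta^{p-1}(c)=\sum c_1\otimes\cdots\otimes c_p$, the two measurings coincide. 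This gives the claimed bijection. The only genuinely non-formal point is the descent in the second step, and that is essentially already done in Theorem \ref{rP5.2}; the one new wrinkle is that the target is $A$ rather than $T(A)$, which is handled precisely by postcomposing $T(\psi)$ with the multiplication map $\mu:T(A)\longrightarrow A$ so that the term $[x,y]\in\mathfrak g$ is interpreted correctly on the right-hand side.
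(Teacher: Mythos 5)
Your proof is correct and takes essentially the same route as the paper: restriction along $\iota$ in one direction, and in the other the very same map $c \mapsto \big(x_1\otimes\cdots\otimes x_p \mapsto \sum\psi(c_1)(x_1)\cdots\psi(c_p)(x_p)\big)$, which the paper writes down directly in \eqref{tens5.6} while you obtain it by postcomposing $T(\psi)$ from Lemma \ref{rL5.1} with the multiplication map $T(A)\longrightarrow A$, followed by the identical cocommutativity computation on $x\otimes y - y\otimes x - [x,y]$ to descend to $U(\mathfrak g)$. Your explicit check that the two assignments are mutually inverse (using that $U(\mathfrak g)$ is generated by $\iota(\mathfrak g)$ and that the measuring axiom determines values on products) merely spells out what the paper declares to be clear.
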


\begin{proof} We begin with a Lie measuring $\phi:C\longrightarrow Hom_K(\mathfrak g,A)$. We use this to define
$\psi:C\longrightarrow Hom_K(U(\mathfrak g),A)$ by setting
\begin{equation}\label{tens5.6}
\psi(c)(1)=\epsilon(c)1 \qquad \psi(c)(x_1\otimes ...\otimes x_p)=\sum \phi(c_1)(x_1)...\phi(c_p)(x_p)
\end{equation} for each $c\in C$ and $x_1\otimes ...\otimes x_p\in \mathfrak g^{\otimes p}$, $p\geq 1$. The product on the right hand
side of \eqref{tens5.6} is taken in the algebra $A$. In order to show that $\psi$ is well defined, we need to show that $\psi(c)$ is zero on  the two sided ideal in $T(\mathfrak g)$ generated by elements of the form 
 \begin{equation}\label{tensuniv1}
z= x\otimes y - y\otimes x -[x,y]
 \end{equation} with $x$, $y\in \mathfrak g$. Since  $\phi:C\longrightarrow Hom_K(\mathfrak g,A)$ is a Lie measuring and $C$ is cocommutative, we have in particular that
 \begin{equation}
 \begin{array}{ll}
 \psi(c)( x\otimes y - y\otimes x -[x,y])&=\sum (\phi(c_1)(x)\phi(c_2)(y)-\phi(c_1)(y)\phi(c_2)(x))-\phi(c)([x,y])\\
 &=\sum (\phi(c_1)(x)\phi(c_2)(y)-\phi(c_2)(y)\phi(c_1)(x)-[\phi(c_1)(x),\phi(c_2)(y)])=0\\
 \end{array}
 \end{equation} From reasoning similar to \eqref{tens5.4}, it now follows that $\psi$ is well defined. It is also clear from the definition in 
 \eqref{tens5.6} that $\psi$ is a measuring of algebras.

\smallskip 
On the other hand, suppose that we have a measuring of algebras $\psi':C\longrightarrow Hom_K(U(\mathfrak g),A)$. Then, we define 
\begin{equation}
\phi':C\longrightarrow Hom_K(\mathfrak g,A)\qquad \phi'(c)(x):=\psi'(c)(x)
\end{equation} for $x\in \mathfrak g$ and $c\in C$. It follows that for any $x_1$, $x_2\in \mathfrak g$ and $c\in C$, we have
\begin{equation}
\begin{array}{ll}
\phi'(c)([x_1,x_2])=\psi'(c)([x_1,x_2])&=\psi'(c)(x_1\otimes x_2 - x_2\otimes x_1)\\
&=\sum (\psi'(c_1)(x_1) \psi'(c_2)(x_2)-\psi'(c_1)(x_2) \psi'(c_2)(x_1)) \\
&=\sum (\psi'(c_1)(x_1) \psi'(c_2)(x_2)-\psi'(c_2)(x_2) \psi'(c_1)(x_1)) \\
&=\sum [\psi'(c_1)(x_1),\psi'(c_2)(x_2)]=\sum [\phi'(c_1)(x_1),\phi'(c_2)(x_2)]\\
\end{array}
\end{equation}  It follows that $\phi':C\longrightarrow Hom_K(\mathfrak g,A)$ is a Lie measuring. It is clear that the two associations defined above
are inverse to each other. This proves the result. 
\end{proof}
 
\section{Measurings of algebras over operads}

In this section, we will introduce  coalgebra measurings for algebras over operads. We begin by recalling the definition of a (symmetric) operad in  $Vect_K$, the category
of vector spaces over $K$.

\begin{defn} (see, for instance, \cite[$\S$ 1.1]{Fress}, \cite[Definition 1]{Markl})  An operad in $Vect_K$ is a collection of $K$-vector spaces $\{\mathcal{O}(n)\}_{n\geq 0}$, where $\mathcal O(n)$ is a left
$K[S_n]$-module, together with $K$-linear maps 
$$ \gamma: \mathcal{O}(n) \otimes  \mathcal{O}(k_1) \otimes \cdots  \otimes \mathcal{O}(k_n) \longrightarrow  \mathcal{O}(k_1 + \cdots + k_n) $$

for $n\geq 1$  and a unit map $\eta: K \longrightarrow \mathcal{O}(1)$ such that the following conditions hold true:

\smallskip

1) Associativity:  For  $n \geq 1$ and non-negative integers $m_1, \ldots, m_n$ and $k_1, \ldots, k_m$, where  $m = m_1 +\cdots + m_n$, the following diagram commutes:

\[
\begin{tikzcd}[row sep=large, column sep=10ex]
\big(\mathcal{O}(n) \otimes  \bigotimes_{s=1}^{n} \mathcal{O}(m_s) \big) \bigotimes_{r=1}^{m} \mathcal{O}(k_r) \arrow{r}{\gamma \otimes id} \arrow[swap]{d}{\textit{Shuffle}} & \mathcal{O}(m) \otimes  \bigotimes_{r=1}^{m} \mathcal{O}(k_r)  \arrow{r}{\gamma} & \mathcal{O}(k_1+ \cdots + k_m)  \\
\mathcal{O}(n) \otimes  \bigotimes_{s=1}^{n}\big( \mathcal{O}(m_s) \otimes \bigotimes_{t=1}^{m_s}\mathcal{O}(k_{p_s+t})\big) \arrow{r}{id\otimes  (\bigotimes_{s=1}^{n}\gamma)} & \mathcal{O}(n) \otimes  \left(\bigotimes_{s=1}^{n} \mathcal{O}(q_s) \right) \arrow{ur}{\gamma} 
\end{tikzcd}
\]

where,  for $s \geq 1$,  we have   $p_s = m_1 + \cdots + m_{s-1}$ and $q_s = k_{p_s+1}+ \cdots + k_{p_s+m_s}$.

\smallskip

2) Equivariance:  For permutations $\sigma\in S_n$, $\tau_i\in S_{m_i}$, the following diagrams commute
\begin{equation*}
\begin{CD}
\mathcal O(n)\otimes \left(\mathcal O(m_1)\otimes \ldots \otimes \mathcal O(m_n)\right) @>id\otimes \left(\tau_1\otimes \ldots \otimes\tau_n\right)>> \mathcal O(n)\otimes \left(\mathcal O(m_1)\otimes \ldots \otimes \mathcal O(m_n)\right)\\
@V\gamma VV @V\gamma VV \\
\mathcal O(m_1+\ldots m_n) @>\tau_1\oplus \ldots \oplus \tau_n>> \mathcal O(m_1+\ldots m_n) \\
\end{CD}
\end{equation*} and

\begin{equation*}\small
\begin{tikzcd}[row sep=large, column sep=10ex]
\mathcal O(n)\otimes \left(\mathcal O(m_1)\otimes \ldots \otimes \mathcal O(m_n)\right)\arrow{r}{\sigma \otimes id} \arrow[swap]{d}{id\otimes \sigma^*} &  \mathcal O(n)\otimes \left(\mathcal O(m_1)\otimes \ldots \otimes \mathcal O(m_n)\right)\arrow{r}{\gamma} & \mathcal{O}(m_{1}+\ldots + m_{n})  \\
\mathcal O(n)\otimes \left(\mathcal O(m_{\sigma(1)})\otimes \ldots \otimes \mathcal O(m_{\sigma(n)})\right) \arrow{r}{\gamma} &  \mathcal O(m_{\sigma(1)}+\ldots +m_{\sigma(n)})\arrow{ur}{\sigma_*(m_1,...,m_n)} 
\end{tikzcd}
\end{equation*} Here, $\sigma_*(m_1,...,m_n)$ acts by permuting blocks of length $m_1$, ..., $m_n$. 

\smallskip

3) Unitality:  The following diagrams commute

\begin{equation*}
\begin{array}{ccc}
\begin{tikzcd}
\mathcal{O}(n) \otimes K^{\otimes n} \arrow{r}{\approx} \arrow[swap]{d}{id \otimes \eta^{\otimes n}} &\mathcal{O}(n) \\
\mathcal{O}(n) \otimes \mathcal{O}(1)^{\otimes n} \arrow{ur}{\gamma} 
\end{tikzcd}
& \qquad & 
\begin{tikzcd}
K \otimes \mathcal{O}(n)  \arrow{r}{\approx} \arrow[swap]{d}{\eta \otimes id} &\mathcal{O}(n) \\
\mathcal{O}(1)  \otimes \mathcal{O}(n) \arrow{ur}{\gamma} 
\end{tikzcd}
\end{array}
\end{equation*}

\end{defn}

\begin{defn} (see, for instance, \cite[$\S$ 1.1.13]{Fress}, \cite[$\S$ 3]{Markl})
Let $\mathcal{O}=\{\mathcal{O}(n)\}_{n\geq 0}$ be an operad in $Vect_K$. An  $\mathcal{O}$-algebra is a $K$-vector space $\mathscr A$ together with  maps:
$$ \alpha_{\mathscr A}(n): \mathcal{O}(n) \otimes \mathscr A^{\otimes n} \longrightarrow \mathscr A, ~~ \mbox{for all}~~ n \geq 0$$ such that the following diagrams commute:

\smallskip

1) Associativity:  For  $n,k_1, \ldots, k_n\geq 0$,  we have 

\[
\begin{tikzcd}
\big(\mathcal{O}(n) \otimes \bigotimes_{s=1}^{n} \mathcal{O}(k_s) \big) \otimes  \bigotimes_{s=1}^{n} \mathscr A^{\otimes k_s} \arrow{r}{\gamma \otimes id} \arrow[swap]{d}{\textit{Shuffle}} & \mathcal{O}(k_1+\cdots+k_n) \otimes  \mathscr A^{\otimes ( k_1+\cdots+k_n)}  \arrow{r}{\alpha_{\mathscr A}} & \mathscr A  \\
\mathcal{O}(n) \otimes  \bigotimes_{s=1}^{n}\big( \mathcal{O}(k_s) \otimes \mathscr A^{\otimes k_s}\big) \arrow{r}{id\otimes (\alpha_{\mathscr A}\otimes ...\otimes \alpha_{\mathscr A})} & \mathcal{O}(n) \otimes  \mathscr A^{\otimes n}  \arrow{ur}{\alpha_{\mathscr A}} 
\end{tikzcd}
\]

2) Equivariance: For each $\sigma\in S_n$, we have

\begin{equation*}
\begin{CD}
\mathcal O(n) \otimes \mathscr  A^{\otimes n} @>\sigma\otimes id >> \mathcal O(n) \otimes \mathscr A^{\otimes n} \\
@Vid\otimes \sigma^{-1}VV @V\alpha_{\mathscr A} VV \\
\mathcal O(n) \otimes \mathscr A^{\otimes n} @>\alpha_{\mathscr A} >> \mathscr A\\
\end{CD}
\end{equation*}

\smallskip

3) Unitality: we have
\[ 
\begin{tikzcd}
K \otimes \mathscr A \arrow{r}{\approx} \arrow[swap]{d}{ \eta \otimes id} & \mathscr A \\
\mathcal{O}(1) \otimes \mathscr A  \arrow{ur}{\alpha_{\mathscr A}} 
\end{tikzcd}
\]

We  note that if we put $n =0$,  we obtain a map $\alpha_{\mathscr A}(0) : \mathcal{O}(0) \longrightarrow \mathscr A$ which we denote by $u_{\mathscr A}:\mathcal O(0)\longrightarrow \mathscr A$. 
\end{defn}

Let $\mathcal{O}=\{\mathcal{O}(n)\}_{n\geq 0}$ be an operad and let $\mathscr A$  be an $\mathcal{O}$-algebra.  We now make the convention that for $\theta \in \mathcal{O}(n)$ and any $a_1, \ldots, a_n \in \mathscr A$, we will write 
 \begin{equation}\label{homozg}
 \theta(a_1 \otimes \cdots \otimes a_n):=\alpha_{\mathscr A}(n)(\theta\otimes a_1 \otimes \cdots \otimes a_n)
 \end{equation} As with elements in the Hochschild complex, we will often write $ \theta(a_1 \otimes \cdots \otimes a_n)$ simply as $ \theta(a_1,..., a_n)$. 
If $\mathscr A$, $\mathscr B$ are $\mathcal O$-algebras, we know that an $\mathcal O$-algebra morphism from $\mathscr A$ to $\mathscr B$ is a $K$-linear map $f: \mathscr A \longrightarrow\mathscr  B$ such that, for each $n\geq 0$,  the following diagram commutes:

\begin{equation}\label{homoy}
\begin{CD}
\mathcal{O}(n) \otimes \mathscr A^{\otimes n}@>\alpha_{\mathscr A}(n)>>   \mathscr A  \\
@V{id \otimes f^{\otimes n}}VV @VVfV \\
\mathcal{O}(n) \otimes \mathscr B^{\otimes n} @>\alpha_{\mathscr B}(n)>>\mathscr  B\\
\end{CD}
\end{equation}
Using the convention in \eqref{homozg}, the condition in \eqref{homoy} may be expressed as 
\begin{equation}\label{homox} f(\theta(a_1 \otimes \cdots \otimes a_n) ) = \theta(f(a_1) \otimes \cdots \otimes f(a_n))
\end{equation}
We will denote by $Alg_{\mathcal O}$ the category of $\mathcal O$-algebras. We are now ready to define coalgebra measurings between $\mathcal O$-algebras.

\begin{defn}\label{Defmesopalg} Let $\mathscr A$, $\mathscr B$ be $\mathcal O$-algebras. A measuring $(C,\phi)$ from $\mathscr A$ to $\mathscr B$ consists of a  $K$-coalgebra $(C,\Delta,\epsilon)$ and a  linear map
\begin{equation*}
\phi: C\longrightarrow Hom_K(\mathscr A,\mathscr B)
\end{equation*} which satisfies the conditions
\begin{equation*}\begin{array}{c}\phi(c)\big(\theta(a_1 \otimes \cdots \otimes a_n)\big) = \sum\theta \big(\phi(c_1)(a_1) \otimes \cdots \otimes \phi(c_n)(a_n)\big) \\
\phi(c)\big(u_{\mathscr A}(\theta_0)\big)=\epsilon(c)u_{\mathscr B}(\theta_0)\qquad \forall\textrm{ }\theta_0\in \mathcal O(0)
\\
\end{array}
\end{equation*}
for each $c\in C$, $\theta \in \mathcal{O}(n)$, $n\geq 1$ and $a_1, \ldots, a_n \in \mathscr A$.
\end{defn}

Let  $Coalg_K$ denote the category of $K$-coalgebras. Then,  it is well known (see, for example,  \cite[Theorem 4.1]{Barr}, \cite{Swd}) that  the forgetful functor $Coalg_K\longrightarrow Vect_K$ has a right adjoint $\mathfrak C: Vect_K \longrightarrow Coalg_K$ . In other words, for any $K$-vector space $V$ and any  $K$-coalgebra $C$, there is a natural isomorphism
\begin{equation}\label{adj1} Hom_K(C, V) \cong Coalg_K(C, \mathfrak C(V))
\end{equation} In particular, for any $V\in Vect_K$, there is a canonical morphism $\pi=\pi(V):\mathfrak C(V)\longrightarrow V$ and the 
 pair $(\mathfrak C(V), \pi(V))$ is known as  the cofree coalgebra on $V$.  
 
\begin{Thm}\label{htm5.4}  Let $\mathcal{O}=\{\mathcal{O}(n)\}_{n\geq 0}$ be an operad and let $\mathscr A$ and $\mathscr B$ be $\mathcal{O}$-algebras. Then, there exists a $K$-coalgebra $\mathcal M=\mathcal M(\mathscr A,\mathscr B)$ and a measuring $\Phi : \mathcal M \longrightarrow Hom_K(\mathscr A, \mathscr B)$ satisfying the following property: given any measuring $(C,\phi)$ from $\mathscr A$ to $\mathscr B$, there exists a unique morphism $\psi: C\longrightarrow \mathcal M$ of coalgebras making the following diagram commutative:
\begin{equation}
\begin{tikzcd}[column sep=1.5em]
\mathcal{M}  \arrow[rr, , ""{name=U}, , "\Phi"]{} &&  Hom_K(\mathscr A, \mathscr B) \\
 & C \arrow[from=U, phantom, "\scalebox{1.5}{$\circlearrowright$}" description] \arrow{ur}{\phi} \arrow{ul}{\psi}
\end{tikzcd}
\end{equation}
\end{Thm}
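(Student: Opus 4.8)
The plan is to construct $\mathcal M(\mathscr A, \mathscr B)$ as the largest subcoalgebra of the cofree coalgebra $\mathfrak C\big(Hom_K(\mathscr A, \mathscr B)\big)$ on which the defining measuring relations hold, exactly mirroring Sweedler's original argument for associative algebras. First I would set $V = Hom_K(\mathscr A, \mathscr B)$ and consider the cofree coalgebra $(\mathfrak C(V), \pi)$ together with its universal property \eqref{adj1}: a linear map $\phi\colon C \to V$ corresponds bijectively to a coalgebra map $\hat\phi\colon C \to \mathfrak C(V)$ with $\pi \circ \hat\phi = \phi$. The key observation is that the two families of conditions in Definition \ref{Defmesopalg} — for each $n \geq 1$, each $\theta \in \mathcal O(n)$, and each $(a_1, \dots, a_n) \in \mathscr A^{\otimes n}$, the equality $\phi(c)(\theta(a_1, \dots, a_n)) = \sum \theta(\phi(c_1)(a_1), \dots, \phi(c_n)(a_n))$, and the analogous unit condition for $\mathcal O(0)$ — can each be rephrased as saying that two linear maps $C \to \mathscr B$ (or $C^{\otimes n} \to \mathscr B$ precomposed with $\Delta^{n-1}$) agree.

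The main steps are then as follows. For fixed data $(n, \theta, a_1, \dots, a_n)$, define two linear maps $\mathfrak C(V) \to \mathscr B$: the first is $\delta \mapsto \pi(\delta)\big(\theta(a_1, \dots, a_n)\big)$, and the second is the composite $\mathfrak C(V) \xrightarrow{\Delta^{n-1}} \mathfrak C(V)^{\otimes n} \xrightarrow{\pi^{\otimes n}} V^{\otimes n} \xrightarrow{\mathrm{ev}} \mathscr B$, where the last map sends $f_1 \otimes \cdots \otimes f_n \mapsto \theta(f_1(a_1), \dots, f_n(a_n))$. Let $W_{(n,\theta,\bar a)} \subseteq \mathfrak C(V)$ be the set of $\delta$ on which these two maps agree after applying every further comultiplication — i.e. the set of $\delta$ such that the two composites agree on $\delta$ and continue to agree after tensoring and comultiplying, which is the condition needed for the subspace to be a subcoalgebra. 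Then $\mathcal M(\mathscr A, \mathscr B)$ is the sum of all subcoalgebras of $\mathfrak C(V)$ contained in the intersection $\bigcap W_{(n,\theta,\bar a)}$ (over all $n \geq 0$, all $\theta$, all tuples) — equivalently, the largest subcoalgebra contained in that intersection, which exists because a sum of subcoalgebras is a subcoalgebra. The map $\Phi$ is the restriction of $\pi$ to $\mathcal M(\mathscr A, \mathscr B)$; one checks it is a measuring precisely because, on a genuine subcoalgebra, $\Delta^{n-1}$ lands back inside and the relations are forced.

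For the universal property: given a measuring $(C, \phi)$, form $\hat\phi\colon C \to \mathfrak C(V)$ via \eqref{adj1}. Because $\phi$ satisfies the measuring identities and $\hat\phi$ is a coalgebra map (so it intertwines the iterated comultiplications), the image $\hat\phi(C)$ is a subcoalgebra landing inside $\bigcap W_{(n,\theta,\bar a)}$, hence inside $\mathcal M(\mathscr A, \mathscr B)$ by maximality; this gives the factorization $\psi\colon C \to \mathcal M(\mathscr A, \mathscr B)$. Uniqueness is immediate: any $\psi$ with $\Phi \circ \psi = \phi$ satisfies $\pi \circ (\iota \circ \psi) = \phi$ where $\iota\colon \mathcal M \hookrightarrow \mathfrak C(V)$, so $\iota \circ \psi = \hat\phi$ by the uniqueness clause in the cofree adjunction, pinning down $\psi$. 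I expect the main obstacle to be the bookkeeping in verifying that the candidate subspace $\bigcap W_{(n,\theta,\bar a)}$ actually contains a largest subcoalgebra and that $\Phi$ on it is genuinely a measuring — this requires being careful that the ``agree after all further comultiplications'' formulation is the right one (the naive intersection of kernels need not be a subcoalgebra), and here the cocommutativity or equivariance subtleties of $\mathcal O(n)$ under $S_n$ need to be tracked so that the evaluation map $\mathrm{ev}$ is well-defined and the relations are $S_n$-consistent. The enrichment statement at the end follows formally: the composition and unit maps come from the observation (already used in the associative case in \eqref{sr2.4qe}) that composites of measurings are measurings, together with the universal property just established.
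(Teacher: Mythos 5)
Your proposal is correct and follows essentially the same route as the paper: build the cofree coalgebra $\mathfrak C$ on $Hom_K(\mathscr A,\mathscr B)$, take $\mathcal M(\mathscr A,\mathscr B)$ to be the sum of all subcoalgebras of $\mathfrak C$ on which $\pi$ restricts to a measuring (your ``largest subcoalgebra contained in the intersection of the measuring loci'' is the same object, since for a subcoalgebra $D$ the coproduct already lands in $D\otimes D$), and deduce the universal property from the cofree adjunction by noting that the image of the induced coalgebra map is such a subcoalgebra. Your worries about iterated comultiplications and $S_n$-equivariance are not needed beyond this, and no cocommutativity hypothesis enters.
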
 

 \begin{proof}
Let $(\mathfrak C, \pi)$ be the cofree coalgebra on $Hom_K(\mathscr A, \mathscr B)$.  We set $ \mathcal{M}(\mathscr A, \mathscr B):= \sum D$, where the sum is taken over all subcoalgebras $D$  of $\mathfrak C$ such that the restriction $\pi |_D : D \longrightarrow Hom_K(\mathscr A, \mathscr B)$ is a measuring. We note that $ \mathcal{M}(\mathscr A,\mathscr B)$ is a coalgebra and the restriction  $ \Phi = \pi |_{ \mathcal{M}(\mathscr A, \mathscr B)}$ determines a measuring $ (\mathcal{M}(\mathscr A,\mathscr  B), \Phi)$ from $\mathscr A$ to $\mathscr B$.
 
\smallskip

We now consider a measuring $(C, \phi)$ from $\mathscr A$ to $\mathscr B$. Using the adjunction in \eqref{adj1}, the morphism $\phi: C\longrightarrow Hom_K(\mathscr A, \mathscr B)$ corresponds to a unique coalgebra morphism  $\psi : C\longrightarrow \mathfrak C=\mathfrak C(Hom_K(\mathscr A,\mathscr  B))$ such that the following diagram commutes:

\begin{equation*}
\begin{tikzcd}[column sep=1.5em]
\mathfrak C \arrow[rr, , ""{name=U}, , "\pi"]{} && Hom_K(\mathscr A, \mathscr B) \\
 & C\arrow[from=U, phantom, "\scalebox{1.5}{$\circlearrowright$}" description] \arrow{ur}{\phi} \arrow{ul}{\psi}
\end{tikzcd}
\end{equation*}

In other words,  $\pi~ |_{\psi(C)}$ is a measuring and it now follows that the subcoalgebra $\psi(C) \subseteq \mathcal{M}(\mathscr A, \mathscr B)$. Hence,  we have a coalgebra morphism  $\psi:C\longrightarrow \mathcal{M}(\mathscr A, \mathscr B)$ making the following diagram commute. 

\begin{equation*}
\begin{tikzcd}[column sep=1.5em]
\mathcal{M}(\mathscr A, \mathscr B)  \arrow[rr, , ""{name=U}, , "\Phi= \pi |_{ \mathcal{M}(\mathscr A, \mathscr B)}"]{} && Hom_K(\mathscr A, \mathscr B) \\
 & C\arrow[from=U, phantom, "\scalebox{1.5}{$\circlearrowright$}" description] \arrow{ur}{\phi} \arrow{ul}{\psi}
\end{tikzcd}
\end{equation*} 
 
\end{proof}
 
 It is well known that $Coalg_K$ is a monoidal category. It follows therefore (see, for instance, \cite[Chapter 6]{Borc}, \cite{GMK}) that we can consider categories enriched over the monoidal category $Coalg_K$.
Our next aim is to show that the category of $\mathcal{O}$-algebras is enriched over $K$-coalgebras.

\begin{prop}\label{Prp2.3} Let $\mathscr A$, $\mathscr B$ and $\mathscr E$ be $\mathcal{O}$-algebras.  Let $(C, \phi)$ be a measuring from $\mathscr A$ to  $\mathscr B$ and $(D, \psi)$ be a measuring from $\mathscr B$ to $\mathscr E$ . Then, the canonical $K$-linear map
\begin{equation}
\begin{CD}
\varphi:   C \otimes D  @>\phi\otimes\psi>> Hom_K(\mathscr A, \mathscr B) \otimes Hom_K(\mathscr B, \mathscr E)@>\circ>> Hom_K(\mathscr A, \mathscr E)
\end{CD}
\end{equation}
is a measuring  from $\mathscr A$ to $\mathscr E$. In particular, there is an induced coalgebra morphism
$C\otimes D\longrightarrow \mathcal M(\mathscr A, \mathscr E)$.
\end{prop}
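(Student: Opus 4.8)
The plan is to verify directly that the composite map $\varphi = \circ \circ (\phi \otimes \psi) : C \otimes D \longrightarrow Hom_K(\mathscr A, \mathscr E)$ satisfies the two defining conditions of Definition \ref{Defmesopalg}, using the fact that $C \otimes D$ is a coalgebra with coproduct $\Delta_{C\otimes D}(c \otimes d) = \sum (c_1 \otimes d_1) \otimes (c_2 \otimes d_2)$ and counit $\epsilon_{C\otimes D}(c \otimes d) = \epsilon_C(c)\epsilon_D(d)$. Explicitly, $\varphi(c\otimes d) = \psi(d) \circ \phi(c)$, and the iterated coproduct on $C\otimes D$ applied to $c\otimes d$ is $\sum (c_1\otimes d_1)\otimes\cdots\otimes(c_n\otimes d_n)$, where the $c_i$ come from $\Delta^{n-1}(c)$ and the $d_i$ from $\Delta^{n-1}(d)$ (independently).

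The main computation is the operation-compatibility condition. For $\theta \in \mathcal O(n)$ and $a_1,\dots,a_n \in \mathscr A$, I would compute
\begin{equation*}
\varphi(c\otimes d)\big(\theta(a_1\otimes\cdots\otimes a_n)\big) = \psi(d)\Big(\phi(c)\big(\theta(a_1\otimes\cdots\otimes a_n)\big)\Big) = \psi(d)\Big(\sum \theta\big(\phi(c_1)(a_1)\otimes\cdots\otimes\phi(c_n)(a_n)\big)\Big),
\end{equation*}
where the first step uses that $(C,\phi)$ is a measuring from $\mathscr A$ to $\mathscr B$; then applying that $(D,\psi)$ is a measuring from $\mathscr B$ to $\mathscr E$ to the element $\theta(\phi(c_1)(a_1)\otimes\cdots\otimes\phi(c_n)(a_n)) \in \mathscr B$, I get $\sum \theta\big(\psi(d_1)\phi(c_1)(a_1)\otimes\cdots\otimes\psi(d_n)\phi(c_n)(a_n)\big)$, which is precisely $\sum \theta\big(\varphi(c_1\otimes d_1)(a_1)\otimes\cdots\otimes\varphi(c_n\otimes d_n)(a_n)\big)$ by definition of the coproduct on $C\otimes D$ and of $\varphi$. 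For the unit condition ($\theta_0 \in \mathcal O(0)$), I would chain $\phi(c)(u_{\mathscr A}(\theta_0)) = \epsilon_C(c) u_{\mathscr B}(\theta_0)$ and then $\psi(d)(u_{\mathscr B}(\theta_0)) = \epsilon_D(d) u_{\mathscr E}(\theta_0)$ to obtain $\varphi(c\otimes d)(u_{\mathscr A}(\theta_0)) = \epsilon_C(c)\epsilon_D(d)\, u_{\mathscr E}(\theta_0) = \epsilon_{C\otimes D}(c\otimes d)\, u_{\mathscr E}(\theta_0)$.

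The last sentence of the statement — the existence of an induced coalgebra morphism $C\otimes D\longrightarrow \mathcal M(\mathscr A,\mathscr E)$ — is then immediate from the universal property established in Theorem \ref{htm5.4}, applied to the measuring $(C\otimes D,\varphi)$. I do not anticipate a genuine obstacle here; the only point requiring care is bookkeeping with the iterated coproduct on the tensor-product coalgebra, namely confirming that the $C$-legs and $D$-legs of $\Delta^{n-1}_{C\otimes D}(c\otimes d)$ factor as $\Delta^{n-1}_C(c)$ and $\Delta^{n-1}_D(d)$ respectively, which follows by an easy induction from the definition of the tensor coproduct and coassociativity. Note that cocommutativity of $C$ or $D$ is not needed for this statement.
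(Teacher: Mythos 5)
Your proposal is correct and follows essentially the same route as the paper: writing $\varphi(c\otimes d)=\psi(d)\circ\phi(c)$, applying the measuring property of $(C,\phi)$ and then of $(D,\psi)$ to verify the operation and unit conditions with respect to the tensor coalgebra structure on $C\otimes D$, and invoking the universal property of $\mathcal M(\mathscr A,\mathscr E)$ for the final claim. No gaps to report.
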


 \begin{proof} 
By definition, we have
\begin{equation*}
 \varphi(c \otimes d) = \psi(d) \circ ~ \phi(c) ~~\mbox{for}~~ c \in C, ~~ d \in D.
\end{equation*}

 \smallskip
 
Given coalgebras $(C,\Delta_C,\epsilon_C)$ and $(D,\Delta_D,\epsilon_D)$, we  note that   $(C \otimes D,\Delta,\epsilon)$ is a $K$-coalgebra whose iterated coproducts are given by  $\Delta^{n-1}(c \otimes d)= \sum \sum (c_1 \otimes d_1)\otimes \cdots \otimes (c_n \otimes d_n)$, where $\Delta^{n-1}_C(c)= \sum c_1 \otimes \cdots \otimes c_n$ and $\Delta^{n-1}_D(d)= \sum d_1 \otimes \cdots \otimes d_n$. 
The counit on  $C \otimes D$ is given by $\epsilon (c\otimes d)=\epsilon_C(c)\epsilon_D(d)$. 

\smallskip
We now claim that $\varphi$ is a measuring from $\mathscr A$ to $\mathscr E$.
For this, we consider $\theta\in \mathcal O(n)$, $c\in C$, $d\in D$ and $a_1, \ldots, a_n \in \mathscr A$.  Then, we see that
\begin{align*}
\varphi(c \otimes d)(\theta(a_1 \otimes \cdots \otimes a_n)) &=(\psi(d) \circ  \phi(c))(\theta(a_1 \otimes \cdots \otimes a_n))\\
&=\psi(d) \Big( \sum \theta\big(\phi(c_1)(a_1) \otimes \cdots \otimes \phi(c_n)(a_n)\big)\Big)\\
&=\sum \sum \theta \big((\psi(d_1) \circ \phi(c_1))(a_1) \otimes \cdots \otimes (\psi(d_n) \circ \phi(c_n))(a_n)\big)\\
&= \sum \sum \theta \big( \varphi(c_1 \otimes d_1)(a_1) \otimes \cdots \otimes \varphi(c_n \otimes d_n)(a_n)\big)
\end{align*}
For any $\theta_0\in \mathcal O(0)$, we observe that
\begin{equation*}
\varphi(c\otimes d)(u_{\mathscr A}(\theta_0))=(\psi(d) \circ  \phi(c))(u_{\mathscr A}(\theta_0))=\epsilon_C(c)\epsilon_D(d)u_{\mathscr E}(\theta_0)
\end{equation*}

It follows that $\varphi$  is a measuring from $\mathscr A$ to $\mathscr E$. From the universal property 
 of $\mathcal M(\mathscr A, \mathscr E)$, we now obtain  an induced $K$-coalgebra morphism  $C \otimes D \longrightarrow \mathcal M(\mathscr A,\mathscr  E)$.
\end{proof}

\begin{cor}\label{Cor2.1} Let $\mathscr A$, $\mathscr B$ and $\mathscr C$ be $\mathcal{O}$-algebras.  Then we have a canonical coalgebra morphism 
\begin{equation}
\mathcal{M}(\mathscr A, \mathscr B) \otimes \mathcal{M}( \mathscr B,  \mathscr C) \longrightarrow \mathcal{M}(\mathscr A,\mathscr  C).
\end{equation}
\end{cor}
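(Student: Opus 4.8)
The plan is to obtain the desired morphism $\mathcal{M}(\mathscr A, \mathscr B) \otimes \mathcal{M}(\mathscr B, \mathscr C) \longrightarrow \mathcal{M}(\mathscr A, \mathscr C)$ directly from Proposition \ref{Prp2.3}. First I would recall that $(\mathcal M(\mathscr A,\mathscr B), \Phi)$ and $(\mathcal M(\mathscr B,\mathscr C), \Phi')$ are the universal measuring coalgebras furnished by Theorem \ref{htm5.4}, so in particular $\Phi : \mathcal M(\mathscr A,\mathscr B) \longrightarrow Hom_K(\mathscr A,\mathscr B)$ and $\Phi' : \mathcal M(\mathscr B,\mathscr C) \longrightarrow Hom_K(\mathscr B,\mathscr C)$ are themselves measurings. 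Then I would apply Proposition \ref{Prp2.3} with $C = \mathcal M(\mathscr A,\mathscr B)$, $D = \mathcal M(\mathscr B,\mathscr C)$, $\phi = \Phi$ and $\psi = \Phi'$: this immediately gives that the composite
\begin{equation*}
\begin{CD}
\mathcal M(\mathscr A,\mathscr B)\otimes \mathcal M(\mathscr B,\mathscr C) @>\Phi\otimes \Phi'>> Hom_K(\mathscr A,\mathscr B)\otimes Hom_K(\mathscr B,\mathscr C) @>\circ>> Hom_K(\mathscr A,\mathscr C)
\end{CD}
\end{equation*}
is a measuring from $\mathscr A$ to $\mathscr C$.

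Next I would invoke the universal property in Theorem \ref{htm5.4} applied to the pair $(\mathscr A,\mathscr C)$: since $\mathcal M(\mathscr A,\mathscr B)\otimes \mathcal M(\mathscr B,\mathscr C)$ (with the tensor-product coalgebra structure described in the proof of Proposition \ref{Prp2.3}) carries a measuring to $Hom_K(\mathscr A,\mathscr C)$, there is a unique coalgebra morphism $\mathcal M(\mathscr A,\mathscr B)\otimes \mathcal M(\mathscr B,\mathscr C)\longrightarrow \mathcal M(\mathscr A,\mathscr C)$ through which this measuring factors. This is exactly the claimed canonical morphism, and its uniqueness is part of the statement of Theorem \ref{htm5.4}. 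In fact the final sentence of Proposition \ref{Prp2.3} already records the existence of this induced map, so the corollary is essentially a restatement once we feed in the universal measurings in place of arbitrary $(C,\phi)$ and $(D,\psi)$.

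There is no real obstacle here; the content was all discharged in Proposition \ref{Prp2.3} and Theorem \ref{htm5.4}. The only point worth a sentence of care is the coalgebra structure on the tensor product: I would remind the reader that $Coalg_K$ is monoidal via $\otimes$ over $K$, with comultiplication $(\mathrm{id}\otimes \mathrm{swap}\otimes \mathrm{id})\circ(\Delta_C\otimes \Delta_D)$ and counit $\epsilon_C\otimes\epsilon_C$, so that $\mathcal M(\mathscr A,\mathscr B)\otimes \mathcal M(\mathscr B,\mathscr C)$ is genuinely an object of $Coalg_K$ and the map produced is a morphism there, not merely a linear map. With that remark in place the proof is a two-line appeal to the two preceding results, and one can further note that (together with the unit map $K \to \mathcal M(\mathscr A,\mathscr A)$ coming from the measuring $t \mapsto t\cdot \mathrm{id}_{\mathscr A}$, and the associativity and unit coherences, which follow from the uniqueness clause of Theorem \ref{htm5.4}) this is precisely the data exhibiting $Alg_{\mathcal O}$ as a category enriched over $Coalg_K$.
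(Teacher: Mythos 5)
Your argument is exactly the paper's: apply Proposition \ref{Prp2.3} to the universal measurings $\Phi$ and $\Phi'$ and then factor through $\mathcal M(\mathscr A,\mathscr C)$ via the universal property of Theorem \ref{htm5.4}, which is precisely what the final sentence of Proposition \ref{Prp2.3} records. The only slip is a typo: the counit of the tensor coalgebra should be $\epsilon_C\otimes\epsilon_D$, not $\epsilon_C\otimes\epsilon_C$.
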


\begin{proof} This follows directly from Proposition \ref{Prp2.3}. 

\end{proof}

\begin{Thm}\label{Thm2.4} Let $K$ be a field and let $\mathcal{O}=\{\mathcal{O}(n)\}_{n\geq 0}$ be an operad in $Vect_K$. Then, the category of $\mathcal{O}$-algebras is enriched over the category of $K$-coalgebras. 
\end{Thm}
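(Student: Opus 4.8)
The plan is to assemble the enrichment from the pieces already in place: the Hom-objects $\mathcal M(\mathscr A,\mathscr B)$ from Theorem \ref{htm5.4}, the composition morphisms from Corollary \ref{Cor2.1}, and the identity morphisms $K\longrightarrow \mathcal M(\mathscr A,\mathscr A)$. First I would recall that $Coalg_K$ is symmetric monoidal with unit the ground field $K$, and that an enrichment of a category $\mathcal C$ over $Coalg_K$ consists of: for each pair of objects an object $\mathcal M(\mathscr A,\mathscr B)\in Coalg_K$; for each triple a composition morphism $c_{\mathscr A\mathscr B\mathscr E}:\mathcal M(\mathscr A,\mathscr B)\otimes\mathcal M(\mathscr B,\mathscr E)\longrightarrow\mathcal M(\mathscr A,\mathscr E)$ of coalgebras; and for each object a unit morphism $u_{\mathscr A}:K\longrightarrow\mathcal M(\mathscr A,\mathscr A)$, all subject to the associativity pentagon and the two unit triangles.

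Second, I would produce the unit morphism. The $K$-linear map $K\longrightarrow Hom_K(\mathscr A,\mathscr A)$, $t\mapsto t\cdot\mathrm{id}_{\mathscr A}$, is a measuring from $\mathscr A$ to $\mathscr A$ with the coalgebra structure being $K$ itself (whose iterated coproducts send $1$ to $1\otimes\cdots\otimes 1$ and whose counit is the identity); checking the two defining identities of Definition \ref{Defmesopalg} is immediate since $\mathrm{id}_{\mathscr A}$ commutes with the operad action and fixes $u_{\mathscr A}(\theta_0)$. By the universal property of Theorem \ref{htm5.4}, this measuring induces the coalgebra morphism $u_{\mathscr A}:K\longrightarrow\mathcal M(\mathscr A,\mathscr A)$. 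The composition morphisms are exactly those supplied by Corollary \ref{Cor2.1}.

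Third, I would verify the coherence axioms, and this is where the only real work lies: rather than chasing diagrams of coalgebra maps directly, I would exploit the universal property. For associativity, both composites $\mathcal M(\mathscr A,\mathscr B)\otimes\mathcal M(\mathscr B,\mathscr E)\otimes\mathcal M(\mathscr E,\mathscr D)\longrightarrow\mathcal M(\mathscr A,\mathscr D)$ are coalgebra maps into the universal measuring coalgebra, so by uniqueness in Theorem \ref{htm5.4} it suffices to check that they become equal after postcomposing with $\Phi:\mathcal M(\mathscr A,\mathscr D)\longrightarrow Hom_K(\mathscr A,\mathscr D)$; and there, unwinding the definitions, both composites send $x\otimes y\otimes z$ (with iterated coproducts $x_1\otimes\cdots$, etc.) to the map $a\mapsto$ the evident triple convolution, the equality being associativity of composition in $Hom_K$ together with coassociativity of the tensor-product coalgebra. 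The same reduction handles the two unit triangles, where the relevant fact is that the measuring underlying $u_{\mathscr A}$ composed (via convolution) with $\phi$ recovers $\phi$, because $\sum\epsilon(c_1)\cdot(\text{--})\circ\phi(c_2)=\phi(c)$.

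The main obstacle, such as it is, is bookkeeping: making sure that the composition morphisms of Corollary \ref{Cor2.1} are \emph{defined} via the universal property (so that the reduction ``check after applying $\Phi$'' is legitimate) and that the associativity and unit constraints of $Coalg_K$ itself are correctly interleaved with those calculations. None of this is deep; I would present it as: (i) state the axioms to be checked; (ii) construct units; (iii) invoke Corollary \ref{Cor2.1} for composition; (iv) reduce each axiom to an identity in $Hom_K(\mathscr A,-)$ using uniqueness in Theorem \ref{htm5.4}; (v) observe that identity holds by associativity/unitality of composition of linear maps and (co)associativity/(co)unitality in $Coalg_K$. This mirrors, in the operadic generality, the argument already given for algebras around \eqref{sr2.4qe}.
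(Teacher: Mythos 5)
Your proposal is correct and follows essentially the same route as the paper: Hom-objects $\mathcal M(\mathscr A,\mathscr B)$ from Theorem \ref{htm5.4}, composition from Corollary \ref{Cor2.1}, and the unit measuring $K\longrightarrow Hom_K(\mathscr A,\mathscr A)$ inducing $K\longrightarrow\mathcal M(\mathscr A,\mathscr A)$. The only difference is that the paper leaves the coherence axioms as ``it may now be verified,'' whereas you spell out the legitimate reduction via uniqueness in the universal property (using that postcomposing $\Phi$ with a coalgebra map is again a measuring), which is exactly the intended verification.
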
 

\begin{proof}  Given  $\mathcal{O}$-algebras $\mathscr A$ and $\mathscr B$, we consider the ``Hom-object'' $\mathcal M(\mathscr A, \mathscr B)$ which lies in $Coalg_K$. We know from 
Corollary \ref{Cor2.1} that given $\mathcal{O}$-algebras $\mathscr A$, $\mathscr B$ and $\mathscr C$,  there is a composition 
\begin{equation}\label{eq2.7}
  \mathcal{M}(\mathscr  A, \mathscr B) \otimes \mathcal{M}(\mathscr  B, \mathscr C) \longrightarrow \mathcal{M}( \mathscr A, \mathscr C).
\end{equation} of Hom-objects in $Coalg_K$. The unit object in $Coalg_K$ is $K$ treated as a coalgebra over itself and it is clear that we have a unit map
\begin{equation}\label{eq2.8}
\phi_{ \mathscr A}: K \longrightarrow Hom_K( \mathscr A,  \mathscr A)
\end{equation} which induces a morphism of coalgebras $ K \longrightarrow \mathcal M(\mathscr A, \mathscr A)$. Together with the composition of Hom-objects in \eqref{eq2.7}, it may now be verified  that the category of $\mathcal{O}$-algebras is enriched over $K$-coalgebras.
\end{proof}

\begin{cor} Let $\mathscr A$ be an $\mathcal{O}$-algebra. Then, $\mathcal M(\mathscr A, \mathscr A)$ carries the structure of a bialgebra.
\end{cor}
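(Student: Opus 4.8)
The plan is to deduce this directly from the enrichment established in Theorem \ref{Thm2.4}, using the general principle that in any category enriched over a symmetric monoidal category $\mathcal V$ every endomorphism object is a monoid in $\mathcal V$, together with the fact that a monoid in $Coalg_K$ is exactly a bialgebra.

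First I would assemble the structure maps explicitly. By the construction in the proof of Theorem \ref{htm5.4}, $\mathcal M(\mathscr A,\mathscr A)$ is a subcoalgebra of the cofree coalgebra on $Hom_K(\mathscr A,\mathscr A)$, so it carries a comultiplication $\Delta$ and counit $\epsilon$ making it an object of $Coalg_K$. Specializing Corollary \ref{Cor2.1} to $\mathscr A=\mathscr B=\mathscr C$ gives a morphism of coalgebras
\[
\nabla:\mathcal M(\mathscr A,\mathscr A)\otimes \mathcal M(\mathscr A,\mathscr A)\longrightarrow \mathcal M(\mathscr A,\mathscr A),
\]
and the unit map of the enrichment gives a morphism of coalgebras $u:K\longrightarrow \mathcal M(\mathscr A,\mathscr A)$. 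The identities $\nabla\circ(\nabla\otimes id)=\nabla\circ(id\otimes\nabla)$ and $\nabla\circ(u\otimes id)=id=\nabla\circ(id\otimes u)$ are precisely the associativity and unit axioms of the $Coalg_K$-enriched category $Alg_{\mathcal O}$ specialized to the single object $\mathscr A$, hence hold by Theorem \ref{Thm2.4}. Thus $(\mathcal M(\mathscr A,\mathscr A),\nabla,u)$ is a monoid in $Coalg_K$.

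Second, I would unwind what this means concretely. The monoidal structure on $Coalg_K$ is the tensor product over $K$, with $C\otimes D$ given the comultiplication $(id\otimes\tau\otimes id)\circ(\Delta_C\otimes\Delta_D)$ and counit $\epsilon_C\otimes\epsilon_D$. Saying that $\nabla$ and $u$ are morphisms in $Coalg_K$ for this structure is, after transposing, exactly the assertion that $\Delta$ and $\epsilon$ of $\mathcal M(\mathscr A,\mathscr A)$ are morphisms of algebras for the multiplication $\nabla$ and unit $u$ — which is the compatibility condition in the definition of a bialgebra. Therefore $\mathcal M(\mathscr A,\mathscr A)$, equipped with $(\Delta,\epsilon,\nabla,u)$, is a bialgebra. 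The only point requiring any care is the translation ``monoid in $Coalg_K$ $=$ bialgebra,'' which rests on the symmetry of the tensor product of coalgebras being exactly the one that makes $\nabla$ a coalgebra map precisely when $\Delta$ is an algebra map; this is entirely formal and presents no real obstacle, so the argument is essentially complete once the structure maps are identified.
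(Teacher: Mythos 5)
Your argument is correct and follows the same route as the paper: the paper's proof likewise observes that $\mathcal M(\mathscr A,\mathscr A)$ is the endomorphism object of the $Coalg_K$-enriched category from Theorem \ref{Thm2.4}, hence a monoid (algebra object) in $Coalg_K$, which is precisely a bialgebra. Your proposal merely spells out the structure maps and the translation ``monoid in $Coalg_K$ $=$ bialgebra'' more explicitly than the paper does.
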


\begin{proof}
We already know that  $\mathcal M(\mathscr A, \mathscr A)$ is a coalgebra. From Theorem \ref{Thm2.4}, we know that it is an  endomorphism object of a category enriched over $Coalg_K$. Hence,  $\mathcal M(\mathscr A,\mathscr  A)$ is an algebra object in $Coalg_K$; in other words, it is a bialgebra.
\end{proof}

For the final part of this section, we will assume that $\mathcal O=\mathcal O(E,R)$ is a binary and quadratic operad over an operadic quadratic datum $(E,R)$ (see \cite[$\S$ 7.1.1]{LoVa}) consisting of binary generating operations $E$ and relators $R$ (see \cite[$\S$ 7.1.3]{LoVa}). Then, the operadic homology $HH_\bullet^{\mathcal O}(\mathscr A)$ of an $\mathcal O$-algebra $\mathscr A$ is obtained from the Koszul complex $C_\bullet^{\mathcal O}(\mathscr A)$  (see \cite[$\S$ 12.1.2]{LoVa}) whose terms are given by
\begin{equation}\label{kosz1}
C_n^{\mathcal O}(\mathscr A):=\mathcal O^{\ash}(n+1)\otimes_{S_{n+1}}\mathscr A^{\otimes n+1}
\end{equation} where $\mathcal O^{\ash}$ is the Koszul dual cooperad of $\mathcal O$ (see \cite[$\S$ 7.2.1]{LoVa}). The differential $d:C_n^{\mathcal O}(\mathscr A)
\longrightarrow C_{n-1}^{\mathcal O}(\mathscr A)$ is given by setting   (see \cite[$\S$ 12.1.2]{LoVa}) 
\begin{equation}\label{kosz2}
d(\delta \otimes (a_1,...,a_{n+1})) =\sum \xi\otimes (a_{\sigma^{-1}(1)},...,a_{\sigma^{-1}(i-1)},\mu(a_{\sigma^{-1}(i)},a_{\sigma^{-1}(i+1)}),a_{\sigma^{-1}(i+2)},...,a_{\sigma^{-1}(n+1)})
\end{equation} where $\Delta_{(1)}(\delta)=\sum (\xi;id,...,id,\mu,id,...,id;\sigma)$, $\delta\in \mathcal O^{\ash}(n+1)$, $\xi\in \mathcal O^{\ash}(n)$, $\mu\in \mathcal O^{\ash}(2)=
E$ and $\sigma\in S_{n+1}$. Here $\Delta_{(1)}$ denotes the infinitesimal decomposition map of the cooperad $\mathcal O^{\ash}$ (see \cite[$\S$ 6.1.7]{LoVa}) and
$\mu\in \mathcal O^{\ash}(2)$ is treated as an element of $\mathcal O(2)$ via the twisting morphism $\kappa :\mathcal O^{\ash}\longrightarrow \mathcal O$. 

\begin{Thm}\label{hTh5.9u}  Let $\mathcal O$ be a binary and quadratic operad. Let $\mathscr A$, $\mathscr B$ be $\mathcal O$-algebras and let  $(C_\bullet^{\mathcal O}(\mathscr A), d)$ and $(C_\bullet^{\mathcal O}(\mathscr B), d)$ be their respective Koszul complexes. Let $(C, \phi)$ be a cocommutative measuring from $\mathscr A$ to $\mathscr B$.

\smallskip
  For each $c\in C$, we have a  morphism $\tilde \phi(c):(C_\bullet^{\mathcal O}(\mathscr A), d)\longrightarrow(C_\bullet^{\mathcal O}(\mathscr B), d)$
of complexes given by:
\begin{equation*} 
\begin{array}{c}
\tilde\phi(c)_n:C_n^{\mathcal O}(\mathscr A)\longrightarrow C_n^{\mathcal O}(\mathscr B)\\  \tilde\phi(c)_n(\delta \otimes (a_1,...,a_{n+1})):= \delta \otimes (\sum  (\phi(c_1)(a_1),....,\phi(c_{n+1})(a_{n+1}))) \\
\end{array}
\end{equation*} In particular, we have an induced morphism $\tilde \phi(c): HH_\bullet^{\mathcal O}(\mathscr A)\longrightarrow HH_\bullet^{\mathcal O}(\mathscr B)$ on operadic
homologies for each $c\in C$. 

\end{Thm}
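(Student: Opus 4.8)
The plan is to mimic the structure of the proofs of Propositions \ref{Prp2.4} and \ref{P414.1}: first check that $\tilde\phi(c)_n$ is well defined on the coinvariants $\mathcal O^{\ash}(n+1)\otimes_{S_{n+1}}\mathscr A^{\otimes n+1}$, then check that it commutes with the differential $d$ of \eqref{kosz2}, and finally invoke cocommutativity of $C$ to reconcile the permutations that inevitably appear. The well-definedness is exactly the point where cocommutativity enters at the level of the $S_{n+1}$-action: for $\tau\in S_{n+1}$, the element $\delta\tau\otimes(a_1,\dots,a_{n+1})$ equals $\delta\otimes(a_{\tau(1)},\dots,a_{\tau(n+1)})$ in the coinvariants, and applying $\tilde\phi(c)_n$ to the first form gives $\delta\tau\otimes(\phi(c_1)(a_1),\dots,\phi(c_{n+1})(a_{n+1}))=\delta\otimes(\phi(c_{\tau(1)})(a_{\tau(1)}),\dots,\phi(c_{\tau(n+1)})(a_{\tau(n+1)}))$, which by cocommutativity of $C$ (so that $\sum c_1\otimes\cdots\otimes c_{n+1}=\sum c_{\tau(1)}\otimes\cdots\otimes c_{\tau(n+1)}$, hence one may relabel) equals $\delta\otimes(\sum\phi(c_1)(a_{\tau(1)}),\dots,\phi(c_{n+1})(a_{\tau(n+1)}))=\tilde\phi(c)_n(\delta\otimes(a_{\tau(1)},\dots,a_{\tau(n+1)}))$. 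So $\tilde\phi(c)_n$ descends to the coinvariants.

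Next I would verify $d\circ\tilde\phi(c)_n=\tilde\phi(c)_{n-1}\circ d$. Write $\Delta_{(1)}(\delta)=\sum(\xi;\mathrm{id},\dots,\mu,\dots,\mathrm{id};\sigma)$ as in \eqref{kosz2}. Applying $\tilde\phi(c)_n$ and then $d$, one lands on terms of the shape $\xi\otimes(\dots,\mu(\phi(c_{?})(a_{\sigma^{-1}(i)}),\phi(c_{?})(a_{\sigma^{-1}(i+1)})),\dots)$; here $\mu$, viewed via the twisting morphism $\kappa$ as an element of $\mathcal O(2)\subseteq E$, is a binary operation of the $\mathcal O$-algebra structure, so the measuring identity of Definition \ref{Defmesopalg} applied to the binary operation $\mu$ gives $\mu(\phi(c_j)(x)\otimes\phi(c_{j+1})(y))=\phi(c_j)\big(\mu(x\otimes y)\big)$ only after the coproduct has been split correctly — more precisely, $\phi(c)(\mu(x\otimes y))=\sum\mu(\phi(c_1)(x)\otimes\phi(c_2)(y))$, so applying $d$ after $\tilde\phi(c)_n$ produces a copy of $c$ whose iterated coproduct on $n+1$ slots must be matched, via the coassociativity and cocommutativity of $C$, with the iterated coproduct on $n$ slots needed to express $\tilde\phi(c)_{n-1}\circ d$. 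This is the routine but bookkeeping-heavy step: one must track that the binary multiplication consumes two consecutive $c_i$'s and fuses them into a single one, which is precisely the statement $\sum c_1\otimes\cdots\otimes c_{n+1}$ with $c_i\otimes c_{i+1}$ replaced by $\Delta$ applied to a single $c_i$, i.e. nothing but coassociativity; the permutation $\sigma$ introduced by $\Delta_{(1)}(\delta)$ is then absorbed using the well-definedness established in the first step together with cocommutativity, exactly as in the $i=p$ case of Proposition \ref{Prp2.4}.

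The main obstacle I anticipate is purely notational: making the index matching in \eqref{kosz2} precise when the permutation $\sigma\in S_{n+1}$, the partial coproduct $\Delta_{(1)}$ of the cooperad $\mathcal O^{\ash}$, and the iterated coalgebra coproduct $\Delta^n$ of $C$ all interact simultaneously. The conceptual content is light — the measuring condition is imposed on every operation in $\mathcal O$, in particular on the binary generators $E=\mathcal O^{\ash}(2)$ that drive the Koszul differential, and cocommutativity of $C$ makes the coproduct insensitive to the shuffles coming from both $\mathcal O^{\ash}$ and $S_{n+1}$ — but writing it out without drowning in subscripts is the real work. Once $d\circ\tilde\phi(c)_n=\tilde\phi(c)_{n-1}\circ d$ is established, each $\tilde\phi(c)$ is a chain map and hence induces $\tilde\phi(c)\colon HH^{\mathcal O}_\bullet(\mathscr A)\longrightarrow HH^{\mathcal O}_\bullet(\mathscr B)$ on homology, which is the final assertion; for the classical operads $\mathcal{A}s$ and $\mathcal{L}ie$ this recovers Propositions \ref{Prp2.4} and \ref{P414.1} respectively, as a sanity check.
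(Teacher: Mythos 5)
Your proposal follows essentially the same route as the paper's proof: well-definedness of $\tilde\phi(c)_n$ on the $S_{n+1}$-coinvariants via cocommutativity, then the two computations of $\tilde\phi(c)_{n-1}\circ d$ (using the measuring identity on the binary operation $\mu$, with coassociativity matching the coproduct lengths) and $d\circ\tilde\phi(c)_n$ (using cocommutativity to absorb the permutation $\sigma$ coming from $\Delta_{(1)}(\delta)$). The bookkeeping you defer is exactly what the paper writes out, so the plan is correct and matches the published argument.
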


\begin{proof} We maintain the notation from \eqref{kosz1} and \eqref{kosz2}. Since $C$ is cocommutative, the maps $\tilde\phi(c)_n:C_n^{\mathcal O}(\mathscr A)\longrightarrow C_n^{\mathcal O}(\mathscr B)$ on the terms in the Koszul complexes are well defined. 

\smallskip We have $\Delta_{(1)}(\delta)=\sum (\xi;id,...,id,\mu,id,...,id;\sigma)$, $\delta\in \mathcal O^{\ash}(n+1)$, $\xi\in \mathcal O^{\ash}(n)$, $\mu\in \mathcal O^{\ash}(2)$ and $\sigma\in S_{n+1}$. From the definitions and the fact that $(C,\phi)$ is a measuring, we obtain
\begin{equation*}
\begin{array}{l}
\tilde\phi(c)_{n-1}(d(\delta \otimes (a_1,...,a_{n+1})))\\ =\sum\tilde\phi(c)_{n-1}( \xi\otimes (a_{\sigma^{-1}(1)},...,a_{\sigma^{-1}(i-1)},\mu(a_{\sigma^{-1}(i)},a_{\sigma^{-1}(i+1)}),a_{\sigma^{-1}(i+2)},...,a_{\sigma^{-1}(n+1)})) \\
=\sum  \xi\otimes (\phi(c_1)(a_{\sigma^{-1}(1)}),... ,\phi(c_{i})(\mu(a_{\sigma^{-1}(i)},a_{\sigma^{-1}(i+1)})), ...,\phi(c_{n})(a_{\sigma^{-1}(n+1)})) \\
=\sum  \xi\otimes (\phi(c_1)(a_{\sigma^{-1}(1)}),... , \mu(\phi(c_{i})(a_{\sigma^{-1}(i)}),\phi(c_{i+1})(a_{\sigma^{-1}(i+1)})), ...,\phi(c_{n+1})(a_{\sigma^{-1}(n+1)})) \\
\end{array}
\end{equation*} On the other hand, since $C$ is cocommutative, we have
\begin{equation*}
\begin{array}{l}
d(\tilde\phi(c)_n(\delta \otimes (a_1,...,a_{n+1})))=d(\delta \otimes (\sum  (\phi(c_1)(a_1),....,\phi(c_{n+1})(a_{n+1})))) \\
=\sum \xi \otimes (\phi(c_{\sigma^{-1}(1)})(a_{\sigma^{-1}(1)}),... , \mu(\phi(c_{\sigma^{-1}(i)})(a_{\sigma^{-1}(i)}),\phi(c_{\sigma^{-1}(i+1)})(a_{\sigma^{-1}(i+1)})), ...,\phi(c_{
\sigma^{-1}(n+1)})(a_{\sigma^{-1}(n+1)}))\\
=\sum  \xi\otimes (\phi(c_1)(a_{\sigma^{-1}(1)}),... , \mu(\phi(c_{i})(a_{\sigma^{-1}(i)}),\phi(c_{i+1})(a_{\sigma^{-1}(i+1)})), ...,\phi(c_{n+1})(a_{\sigma^{-1}(n+1)})) \\
\end{array}
\end{equation*} In other words, the maps $\{\tilde\phi(c)\}_{n\geq 0}$ commute with the differentials on the Koszul complexes. This proves the result.

\end{proof}

\section{Measurings of $\mathcal O$-algebras and their universal enveloping algebras}

In Section 4, we showed that a measuring of Lie algebras induces a measuring of their universal enveloping algebras. In general, if $\mathscr A$ is an $\mathcal O$-algebra, there is an ordinary associative
algebra $U_{\mathcal O}(\mathscr A)$ (see \cite[$\S$ 1.6]{GK}, \cite{HS93}) such that the category of left modules over $U_{\mathcal O}(\mathscr A)$ is equivalent to that of modules over the 
$\mathcal O$-algebra $\mathscr A$. In order to describe this algebra, we need to recall the following operation (see, for instance, \cite[Proposition 13]{Markl}): for $m$, $n\geq 0$ and $1\leq i\leq m$, we have
\begin{equation}\label{circi}
\begin{CD}
\circ_i: \mathcal O(m)\otimes \mathcal O(n)@>\cong >> \mathcal O(m)\otimes K^{\otimes i-1}\otimes \mathcal O(n)\otimes K^{\otimes m-i} @. \\
@. @VVid\otimes \eta^{\otimes i-1}
\otimes id\otimes \eta^{\otimes m-i}V\\
@.  \mathcal O(m)\otimes (\mathcal O(1)^{\otimes i-1}\otimes \mathcal O(n)\otimes \mathcal O(1)^{\otimes m-i}) @>\gamma>>
 \mathcal O(m+n-1)\\
 \end{CD}
\end{equation} In particular, if $\mathscr A$ is an $\mathcal O$-algebra, we have
\begin{equation}\label{circA}
(\theta\circ_i\theta')(a_1,...,a_{m+n-1}):=\theta(a_1,...,a_{i-1},\theta'(a_i,...,a_{i+n-1}),a_{i+n},...,a_{m+n-1})
\end{equation} for $\theta\in \mathcal O(m)$, $\theta'\in \mathcal O(n)$ and $a_1\otimes ... \otimes a_{m+n-1}\in \mathscr A^{\otimes m+n-1}$.

\smallskip

If $\mathscr A$ is an $\mathcal O$-algebra, we now recall 
(see \cite[$\S$ 1.6]{GK}) that its universal enveloping algebra $U_{\mathcal O}(\mathscr A)$ is generated (as a vector space) by symbols $Z(\theta; a_1,...,a_{m})$, $\theta\in \mathcal O(m+1)$, 
$a_1$,...,$a_{m}\in \mathscr A$, $m\geq 0$ subject to polylinearity in each variable and the following relations
\begin{equation}\label{gkrel}
\begin{array}{c}
Z(\sigma\cdot \theta; a_1,...,a_{m})=Z(\theta; a_{\sigma(1)},...,a_{\sigma(m)})\\
Z(\theta\circ_i\theta'; a_1,...,a_{m+n})=Z(\theta; a_1,...,a_{i-1},\theta'(a_{i},...,a_{i+n}),a_{i+n+1},...,a_{m+n})\\
\end{array}
\end{equation}
for each $1\leq i\leq m$ and $\sigma\in S_m$, where $\theta\in \mathcal O(m+1)$,  $\theta'\in \mathcal O(n+1)$, $( a_1,...,a_{m+n})\in \mathscr A^{\otimes m+n}$. Here, $\sigma\in S_m$
acts on $\theta\in \mathcal O(m+1)$ via the inclusion $S_m\hookrightarrow S_{m+1}$. 

\smallskip The multiplication in $U_{\mathcal O}(\mathscr A)$ is determined by
\begin{equation}\label{mgkrel}
Z(\theta; a_1,...,a_{m})\cdot Z(\mu; b_1,...,b_{l})=Z(\theta\circ_{m+1}\mu;a_1,...,a_{m},b_1,...,b_{l})
\end{equation} for $\theta\in \mathcal O(m+1)$, $\mu\in \mathcal O(l+1)$ and $a_i$, $b_j\in\mathscr  A$.

\begin{lem}\label{Lema6.9} Let $\mathscr A$, $\mathscr B$ be $\mathcal O$-algebras and let $(C,\phi)$ be a  cocommutative  measuring from $\mathscr A$ to $\mathscr B$. Then, there is an induced
linear map
\begin{equation}
U_{\mathcal O}(\phi):C\longrightarrow Hom_K(U_{\mathcal O}(\mathscr A),U_{\mathcal O}(\mathscr B))
\end{equation}

\end{lem}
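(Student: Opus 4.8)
The plan is to define $U_{\mathcal O}(\phi)(c)$ on the generators $Z(\theta;a_1,\dots,a_m)$ of $U_{\mathcal O}(\mathscr A)$ by the formula suggested in the statement of Theorem \ref{rP6.9}, namely
\begin{equation*}
U_{\mathcal O}(\phi)(c)\big(Z(\theta;a_1,\dots,a_m)\big):=\sum Z\big(\theta;\phi(c_1)(a_1),\dots,\phi(c_m)(a_m)\big),
\end{equation*}
where $\Delta^{m-1}(c)=\sum c_1\otimes\cdots\otimes c_m$, and then check that this descends to a well-defined $K$-linear map on the quotient of the free object on the symbols $Z(\theta;a_1,\dots,a_m)$ by polylinearity and the relations \eqref{gkrel}. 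Polylinearity in each $a_i$ is immediate since each $\phi(c_j)$ is $K$-linear, and polylinearity in $\theta$ is immediate since the assignment $\theta\mapsto Z(\theta;-)$ is linear and we have not touched the $\theta$-slot.

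The substance is the two families of relations in \eqref{gkrel}. For the equivariance relation $Z(\sigma\cdot\theta;a_1,\dots,a_m)=Z(\theta;a_{\sigma(1)},\dots,a_{\sigma(m)})$ with $\sigma\in S_m\hookrightarrow S_{m+1}$, I would apply the definition to the left side to get $\sum Z(\sigma\cdot\theta;\phi(c_1)(a_1),\dots,\phi(c_m)(a_m))$, use the same relation \eqref{gkrel} inside $U_{\mathcal O}(\mathscr B)$ to rewrite this as $\sum Z(\theta;\phi(c_{\sigma(1)})(a_{\sigma(1)}),\dots,\phi(c_{\sigma(m)})(a_{\sigma(m)}))$, and then invoke cocommutativity of $C$ in the form $\sum c_1\otimes\cdots\otimes c_m=\sum c_{\sigma(1)}\otimes\cdots\otimes c_{\sigma(m)}$ to re-index and recognize the result as $U_{\mathcal O}(\phi)(c)(Z(\theta;a_{\sigma(1)},\dots,a_{\sigma(m)}))$. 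For the composition relation $Z(\theta\circ_i\theta';a_1,\dots,a_{m+n})=Z(\theta;a_1,\dots,a_{i-1},\theta'(a_i,\dots,a_{i+n}),a_{i+n+1},\dots,a_{m+n})$, I would apply the definition to the left side, then use the same relation in $U_{\mathcal O}(\mathscr B)$ to pull the inner operadic element out, obtaining a term of the form $Z(\theta;\phi(c_1)(a_1),\dots,\theta'(\phi(c_i)(a_i),\dots,\phi(c_{i+n})(a_{i+n})),\dots)$, and finally use the measuring condition $\phi(c)(\theta'(a_i\otimes\cdots\otimes a_{i+n}))=\sum\theta'(\phi(c_i')(a_i)\otimes\cdots)$ together with coassociativity of $\Delta$ to identify this with the image of the right side under $U_{\mathcal O}(\phi)(c)$. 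This last matching of iterated coproduct indices — checking that the $(m+n)$-fold coproduct of $c$ reassembles correctly once one slot is replaced by an $(n+1)$-fold subcoproduct — is the step I expect to be the main bookkeeping obstacle; cocommutativity is what lets the indices be permuted freely so that the $\theta'$-slot receives a contiguous block of the $c_j$'s.

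Finally, the definition respects scalars and is $K$-linear in $c$ by construction (the coproduct $\Delta^{m-1}$ is $K$-linear), so assembling the maps over all $m$ and all generators yields the claimed linear map $U_{\mathcal O}(\phi):C\longrightarrow Hom_K(U_{\mathcal O}(\mathscr A),U_{\mathcal O}(\mathscr B))$. (That this map is moreover a \emph{measuring} of associative algebras — i.e.\ compatible with the multiplication \eqref{mgkrel} and the units — is presumably Theorem \ref{rP6.9} proper; for the present lemma only the well-definedness of the linear map is needed, and it follows from the three checks above.)
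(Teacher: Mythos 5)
Your proposal is correct and follows essentially the same route as the paper: define $U_{\mathcal O}(\phi)(c)$ on the generators $Z(\theta;a_1,\dots,a_m)$ via the iterated coproduct, check polylinearity, verify the composition relation in \eqref{gkrel} using the measuring property together with coassociativity, and verify the equivariance relation using cocommutativity. One minor remark: for the composition relation coassociativity alone suffices, since the factors $c_i,\dots,c_{i+n}$ feeding the $\theta'$-slot already form a contiguous block; cocommutativity is only needed for the $S_m$-equivariance relation.
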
 

\begin{proof} Maintaining the notation above, we set
\begin{equation}\label{eq6.15rx}
U_{\mathcal O}(\phi)(c)(Z(\theta; a_1,...,a_{m})):=Z(\theta,\phi(c_1)(a_1),...,\phi(c_{m})(a_m))=Z(\theta;c_1\cdot a_1,...,c_m\cdot a_m)
\end{equation} for each $c\in C$. It is evident that this definition is well behaved with respect to polylinearity of the symbols $Z(\theta; a_1,...,a_{m})$ in each variable. Since  $\phi:C
\longrightarrow Hom_K(\mathscr A,\mathscr B)$ is a measuring of $\mathcal O$-algebras, we also have
\begin{equation*}
\begin{array}{l}
U_{\mathcal O}(\phi)(c)(Z(\theta\circ_i\theta'; a_1,...,a_{m+n}))\\=Z(\theta\circ_i\theta',c_1\cdot a_1,...,c_{m+n}\cdot a_{m+n})\\
=Z(\theta; c_1\cdot a_1,...,c_{i-1}\cdot a_{i-1},\theta'(c_i\cdot a_{i},...,c_{i+n}\cdot a_{i+n}),c_{i+n+1}\cdot a_{i+n+1},...,c_{m+n}\cdot a_{m+n})\\
=Z(\theta;c_1\cdot  a_1,...,c_{i-1}\cdot a_{i-1},c_i\cdot \theta'(a_{i},...,a_{i+n}),c_{i+1}\cdot a_{i+n+1},...,c_{m}\cdot a_{m+n})\\
=U_{\mathcal O}(\phi)(c)(Z(\theta; a_1,...,a_{i-1},\theta'(a_{i},...,a_{i+n}),a_{i+n+1},...,a_{m+n}))\\
\end{array}
\end{equation*} Additionally, since $C$ is cocommutative, we see that
\begin{equation*}
\begin{array}{ll}
U_{\mathcal O}(\phi)(c)(Z(\sigma\cdot \theta; a_1,...,a_{m}))&=Z(\sigma\cdot \theta; c_1\cdot a_1,...,c_m\cdot a_m)\\
&=Z(\theta; c_{\sigma(1)}\cdot a_{\sigma(1)},...,c_{\sigma(m)}\cdot a_{\sigma(m)})\\
&=Z(\theta; c_{1}\cdot a_{\sigma(1)},...,c_{m}\cdot a_{\sigma(m)})\\
&=U_{\mathcal O}(\phi)(c)(Z(\theta; a_{\sigma(1)},...,a_{\sigma(m)})\\ 
\end{array}
\end{equation*} Hence, the definition in \eqref{eq6.15rx} is well behaved with respect to both relations in \eqref{gkrel}. This proves the result.

\end{proof}

\begin{Thm}\label{rP6.9} Let $\mathscr A$, $\mathscr B$ be $\mathcal O$-algebras and let $(C,\phi)$ be a  cocommutative  measuring from $\mathscr A$ to $\mathscr B$. Then, the  induced
linear map
\begin{equation}
U_{\mathcal O}(\phi):C\longrightarrow Hom_K(U_{\mathcal O}(\mathscr A),U_{\mathcal O}(\mathscr B))
\end{equation} is a measuring of algebras from $U_{\mathcal O}(\mathscr A)$ to $U_{\mathcal O}(\mathscr B)$.
\end{Thm}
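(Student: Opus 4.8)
The plan is to verify that the linear map $U_{\mathcal O}(\phi)$ constructed in Lemma \ref{Lema6.9} satisfies the two defining conditions of a coalgebra measuring of associative algebras, namely compatibility with multiplication and with the unit. Since $U_{\mathcal O}(\mathscr A)$ is generated as a vector space by the symbols $Z(\theta;a_1,\dots,a_m)$, it suffices by bilinearity to check the multiplicativity condition on a product of two such generators. First I would recall from \eqref{mgkrel} that $Z(\theta;a_1,\dots,a_m)\cdot Z(\mu;b_1,\dots,b_l) = Z(\theta\circ_{m+1}\mu; a_1,\dots,a_m,b_1,\dots,b_l)$, so the left-hand side of the measuring condition is
\begin{equation*}
U_{\mathcal O}(\phi)(c)\big(Z(\theta;a_1,\dots,a_m)\cdot Z(\mu;b_1,\dots,b_l)\big) = Z\big(\theta\circ_{m+1}\mu; c_1\cdot a_1,\dots,c_m\cdot a_m, c_{m+1}\cdot b_1,\dots,c_{m+l}\cdot b_l\big),
\end{equation*}
using the definition in \eqref{eq6.15rx} and the iterated coproduct $\Delta^{m+l-1}(c)=\sum c_1\otimes\cdots\otimes c_{m+l}$.

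Next I would compute the right-hand side $\sum (U_{\mathcal O}(\phi)(c_1)(Z(\theta;a_1,\dots,a_m)))\cdot (U_{\mathcal O}(\phi)(c_2)(Z(\mu;b_1,\dots,b_l)))$. Applying \eqref{eq6.15rx} to each factor and then \eqref{mgkrel} to multiply the results, and using coassociativity of $C$ to rewrite $\sum (c_1)_1\otimes\cdots\otimes(c_1)_m\otimes(c_2)_1\otimes\cdots\otimes(c_2)_l = \Delta^{m+l-1}(c)$, I would obtain exactly
\begin{equation*}
\sum Z\big(\theta\circ_{m+1}\mu; c_1\cdot a_1,\dots,c_m\cdot a_m, c_{m+1}\cdot b_1,\dots,c_{m+l}\cdot b_l\big),
\end{equation*}
matching the left-hand side. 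For the unit, the unit of $U_{\mathcal O}(\mathscr A)$ is $Z(\theta_0)$ for the appropriate $\theta_0$ arising from $\mathcal O(1)$ (the empty-argument symbol); I would check directly from \eqref{eq6.15rx} that $U_{\mathcal O}(\phi)(c)$ sends it to $\epsilon(c)$ times the corresponding unit of $U_{\mathcal O}(\mathscr B)$, which is immediate since there are no $a_i$'s to act on and the coefficient bookkeeping reduces to $\Delta^{-1}(c)=\epsilon(c)$ — equivalently, this is a degenerate case of \eqref{eq6.15rx} with $m=0$.

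The only real subtlety — and the step I expect to require the most care — is confirming that the reindexing of the Sweedler components is legitimate: one must match the iterated coproduct $\Delta^{m+l-1}(c)$ appearing when $U_{\mathcal O}(\phi)(c)$ is applied to the single symbol $Z(\theta\circ_{m+1}\mu;\dots)$ against the tensor product $\Delta^{m-1}(c_1)\otimes\Delta^{l-1}(c_2)$ summed over $\Delta(c)=\sum c_1\otimes c_2$ that appears on the other side. This is exactly the coassociativity identity $(\Delta^{m-1}\otimes\Delta^{l-1})\circ\Delta = \Delta^{m+l-1}$, so no cocommutativity is actually needed for the multiplicativity check itself — cocommutativity was already used in Lemma \ref{Lema6.9} to guarantee $U_{\mathcal O}(\phi)$ is well defined on the quotient defining $U_{\mathcal O}(\mathscr A)$, and here we simply invoke that lemma and work with the well-defined map. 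Once the indices are aligned, both sides are visibly equal and the proof concludes.
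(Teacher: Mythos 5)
Your proposal is correct and follows essentially the same route as the paper: apply $U_{\mathcal O}(\phi)(c)$ to the product via \eqref{mgkrel}, use the definition \eqref{eq6.15rx}, and regroup the Sweedler components by coassociativity to match $\sum U_{\mathcal O}(\phi)(c_1)(Z(\theta;\dots))\cdot U_{\mathcal O}(\phi)(c_2)(Z(\mu;\dots))$, with cocommutativity only needed earlier (in Lemma \ref{Lema6.9}) for well-definedness. Your extra remark on the unit condition (the degenerate $m=0$ case giving $\epsilon(c)$ times the unit) is a harmless addition that the paper's proof leaves implicit.
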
 

\begin{proof}
We maintain the notation from \eqref{mgkrel}. We observe that for any $c\in C$, we have
\begin{equation*}
\begin{array}{ll}
U_{\mathcal O}(\phi)(c)(Z(\theta\circ_{m+1}\mu;a_1,...,a_{m},b_1,...,b_{l}))&=\sum  Z(\theta\circ_{m+1}\mu;c_1\cdot a_1,...,c_m\cdot a_{m},c_{m+1}\cdot b_1,...,c_{m+l}\cdot b_{l})\\
&=\sum Z(\theta; c_1\cdot a_1,...,c_m\cdot a_{m})\cdot Z(\mu; c_{m+1}\cdot b_1,...,c_{m+l}\cdot b_{l})  \\
&=\sum U_{\mathcal O}(c_1)(Z(\theta; a_1,...,a_{m}))\cdot U_{\mathcal O}(\phi)(c_2)( Z(\mu; b_1,...,b_{l})) \\
\end{array}
\end{equation*} This proves the result.
\end{proof}

\begin{cor} Let $\mathscr A$, $\mathscr B$ be $\mathcal O$-algebras. Then, there is a canonical morphism of coalgebras $\mathcal M_{\mathcal O}(\mathscr A,\mathscr B)\longrightarrow \mathcal M(U_{\mathcal O}(\mathscr A),
U_{\mathcal O}(\mathscr B))$. 
\end{cor}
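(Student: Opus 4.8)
The plan is to reduce this to a two-step chase through universal properties, pivoting on Theorem \ref{rP6.9}. First I would invoke Theorem \ref{htm5.4} to produce the universal measuring $\Phi\colon\mathcal M_{\mathcal O}(\mathscr A,\mathscr B)\longrightarrow Hom_K(\mathscr A,\mathscr B)$ of $\mathcal O$-algebras; in the cocommutative setting one takes instead the cocommutative part of $\mathcal M_{\mathcal O}(\mathscr A,\mathscr B)$, exactly as in the associative case recalled in Section 2, so that $\Phi$ becomes a cocommutative measuring. Applying Theorem \ref{rP6.9} to the pair $(\mathcal M_{\mathcal O}(\mathscr A,\mathscr B),\Phi)$ then yields an honest coalgebra measuring of associative algebras
\begin{equation*}
U_{\mathcal O}(\Phi)\colon \mathcal M_{\mathcal O}(\mathscr A,\mathscr B)\longrightarrow Hom_K\big(U_{\mathcal O}(\mathscr A),U_{\mathcal O}(\mathscr B)\big).
\end{equation*}

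Next I would feed $U_{\mathcal O}(\Phi)$ into the universal property of Sweedler's measuring coalgebra $\mathcal M\big(U_{\mathcal O}(\mathscr A),U_{\mathcal O}(\mathscr B)\big)$ for the associative algebras $U_{\mathcal O}(\mathscr A)$, $U_{\mathcal O}(\mathscr B)$ (recalled in Section 2): since $\big(\mathcal M_{\mathcal O}(\mathscr A,\mathscr B),U_{\mathcal O}(\Phi)\big)$ is a coalgebra measuring from $U_{\mathcal O}(\mathscr A)$ to $U_{\mathcal O}(\mathscr B)$, there is a unique morphism of coalgebras
\begin{equation*}
\Theta\colon \mathcal M_{\mathcal O}(\mathscr A,\mathscr B)\longrightarrow \mathcal M\big(U_{\mathcal O}(\mathscr A),U_{\mathcal O}(\mathscr B)\big)
\end{equation*}
through which $U_{\mathcal O}(\Phi)$ factors, that is, postcomposing $\Theta$ with the tautological measuring of $\mathcal M\big(U_{\mathcal O}(\mathscr A),U_{\mathcal O}(\mathscr B)\big)$ recovers $U_{\mathcal O}(\Phi)$. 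This $\Theta$ is the asserted canonical morphism; its canonicality is automatic, being the unique factorization of a measuring through a universal object. If one also wants these morphisms to assemble into a functor between the associated $Coalg_K$-enriched categories, the compatibility with the composition maps of Corollary \ref{Cor2.1} and \eqref{sr2.4qe} can be verified by the same postcomposition argument as in the proof of Theorem \ref{Theorem2.5ft}, once one checks that $U_{\mathcal O}(-)$ sends the composite of two $\mathcal O$-algebra measurings (Proposition \ref{Prp2.3}) to the composite of the induced measurings of enveloping algebras.

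The one place where care is genuinely needed is the cocommutativity hypothesis of Theorem \ref{rP6.9}: that result only produces $U_{\mathcal O}(\phi)$ for a \emph{cocommutative} measuring $\phi$, and indeed the passage through the relation $Z(\sigma\cdot\theta;a_1,\dots,a_m)=Z(\theta;a_{\sigma(1)},\dots,a_{\sigma(m)})$ in Lemma \ref{Lema6.9} uses cocommutativity of the iterated coproduct in an essential way. Accordingly I would read the corollary with $\mathcal M_{\mathcal O}(\mathscr A,\mathscr B)$ denoting the universal cocommutative measuring coalgebra and with $\mathcal M\big(U_{\mathcal O}(\mathscr A),U_{\mathcal O}(\mathscr B)\big)$ replaced, if one insists on staying cocommutative on the target as well, by its cocommutative part $\mathcal M_c\big(U_{\mathcal O}(\mathscr A),U_{\mathcal O}(\mathscr B)\big)$ (the general-coalgebra morphism of the corollary is then recovered by composing with $\mathcal M_c\hookrightarrow\mathcal M$). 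Apart from this bookkeeping, the proof is precisely the two universal-property steps above and involves no new computation.
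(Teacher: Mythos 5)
Your proof is essentially identical to the paper's: apply Theorem \ref{rP6.9} to the universal measuring $\Phi:\mathcal M_{\mathcal O}(\mathscr A,\mathscr B)\longrightarrow Hom_K(\mathscr A,\mathscr B)$ from Theorem \ref{htm5.4}, then factor the resulting measuring of associative algebras through $\mathcal M(U_{\mathcal O}(\mathscr A),U_{\mathcal O}(\mathscr B))$ by its universal property. Your caveat about cocommutativity is also well placed: the paper's own proof silently applies Theorem \ref{rP6.9}, whose hypothesis is a cocommutative measuring (needed for the relation $Z(\sigma\cdot\theta;a_1,\dots,a_m)=Z(\theta;a_{\sigma(1)},\dots,a_{\sigma(m)})$ in Lemma \ref{Lema6.9}), to $\mathcal M_{\mathcal O}(\mathscr A,\mathscr B)$, which need not be cocommutative, so strictly the statement should be read with the cocommutative part as source, exactly as you indicate.
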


\begin{proof} Using Theorem \ref{rP6.9}, it follows that the universal measuring $\mathcal M_{\mathcal O}(\mathscr A,\mathscr B)\longrightarrow Hom_K(\mathscr A,\mathscr B)$ induces a measuring
$\mathcal M_{\mathcal O}(\mathscr A,\mathscr B)\longrightarrow Hom_K(U_{\mathcal O}(\mathscr A),U_{\mathcal O}(\mathscr B))$ of ordinary algebras. The result is now clear from the universal property of the coalgebra
$\mathcal M(U_{\mathcal O}(\mathscr A),U_{\mathcal O}(\mathscr B))$. 

\end{proof}

\section{Measuring comodules for algebras over operads} 

We continue with $\mathcal O=\{\mathcal O(n)\}_{n\geq 0}$ being an operad of vector spaces over $K$.  We first recall the well known notion of modules over $\mathcal O$-algebras.

\begin{defn}\label{oalgmod} (see, for instance, \cite[Definition 3.4]{Baues} or \cite[Chapter 12]{LoVa})
Let $\mathcal{O}=\{\mathcal{O}(n)\}_{n\geq 0}$ be an operad and let $\mathscr A$ be an $\mathcal{O}$-algebra. An  $\mathscr A$-module $\mathscr M$ is a $K$-vector space $\mathscr M$ together with  maps:
\begin{equation}\label{modop}\alpha_{\mathscr M}(n): \mathcal{O}(n) \otimes \mathscr A^{\otimes n-1} \otimes \mathscr  M\longrightarrow \mathscr M, ~~ \mbox{for all}~~ n \geq 1
\end{equation} satisfying the following conditions.

\smallskip
1) Associativity:  For  $n,k_1, \ldots, k_n\geq 0$ and $k=\sum k_i$,  the following diagram commutes

\[
\begin{tikzcd}[column sep=large]
\big(\mathcal{O}(n) \otimes \bigotimes_{s=1}^{n} \mathcal{O}(k_s) \big)   \bigotimes\mathscr A^{k-1}\otimes \mathscr M \arrow{r}{\gamma \otimes id\otimes id} \arrow[swap]{d}{\textit{Shuffle}} & \mathcal{O}(k) \otimes  \mathscr A^{\otimes k-1} \otimes\mathscr M \arrow{r}{\alpha_{\mathscr M}} & \mathscr M  \\
\mathcal{O}(n) \otimes  \bigotimes_{s=1}^{n-1}\big( \mathcal{O}(k_s) \otimes \mathscr A^{\otimes k_s}\big) \otimes \big(\mathcal O(k_n)\otimes \mathscr A^{\otimes k_n-1}
\otimes \mathscr M\big)\textrm{ }\arrow{r}{id\otimes \alpha_{\mathscr A}^{\otimes n-1}\otimes  \alpha_{\mathscr M}} \quad & \quad \mathcal{O}(n) \otimes  \mathscr A^{\otimes n-1}\otimes \mathscr M  \arrow{ur}{\alpha_{\mathscr M}} 
\end{tikzcd}
\]

2) Equivariance: For each $\sigma\in S_{n-1}$, the following diagram commutes:

\begin{equation*}
\begin{CD}
\mathcal O(n) \otimes \mathscr  A^{\otimes n-1}\otimes \mathscr M @>\sigma\otimes id >> \mathcal O(n) \otimes \mathscr A^{\otimes n-1}\otimes \mathscr M \\
@Vid\otimes \sigma^{-1}VV @V\alpha_{\mathscr M} VV \\
\mathcal O(n) \otimes \mathscr A^{\otimes n-1} \otimes \mathscr M @>\alpha_{\mathscr M} >> \mathscr M\\
\end{CD}
\end{equation*} Here, $\sigma\in S_{n-1}$ acts on $\mathcal O(n)$ and $\mathscr  A^{\otimes n-1}\otimes \mathscr M$ via the inclusion $S_{n-1}\hookrightarrow S_n$.

\smallskip

3) Unitality:  The following diagram commutes
\[ 
\begin{tikzcd}
K \otimes \mathscr M \arrow{r}{\approx} \arrow[swap]{d}{ \eta \otimes id} & \mathscr M \\
\mathcal{O}(1) \otimes \mathscr M  \arrow{ur}{\alpha_{\mathscr M}} 
\end{tikzcd}
\]

\end{defn}

Let $\mathscr A$ be an $\mathcal O$-algebra and let $\mathscr M$ be an $\mathscr A$-module. We now make the convention that for $\theta \in \mathcal{O}(n)$, $m \in \mathscr M$  and any $a_1, \ldots, a_{n-1} \in \mathscr A$, we will write 
 \begin{equation}\label{homoz}
 \theta_{\mathscr  M}(a_1 \otimes \cdots \otimes a_{n-1}\otimes m):=\alpha_{\mathscr  M}(n)(\theta\otimes a_1 \otimes \cdots \otimes a_{n-1}\otimes m)
 \end{equation} 
Let $(C, \Delta, \epsilon)$ be a $K$-coalgebra and 
 let $(P,\omega)$ be a left comodule over $C$ with structure map $\omega:  P \longrightarrow C \otimes P$. Then, we know    that $(id \otimes \omega) \circ \omega = (\Delta \otimes id) \circ \omega$ and $(\epsilon \otimes id) \circ \omega = id$. For $p \in P$,  we write $\omega(p) = \sum p_0 \otimes p_1$. We are now ready to define  measuring comodules over a measuring coalgebra $(C,\phi)$.

\begin{defn}\label{Defmescomod}
Let $\mathscr A$, $\mathscr B$ be  $\mathcal{O}$-algebras and let $(C, \phi)$ be a measuring from $\mathscr A$ to $\mathscr B$. Let $\mathscr M$ be an $\mathscr A$-module and $\mathscr N$  a $\mathscr B$ module. A (left) measuring comodule over $(C, \phi)$ from $\mathscr M$ to $\mathscr N$ consists of a left $C$-comodule $P$ and a $K$-linear map $ \psi: P \longrightarrow Hom_K(\mathscr M,\mathscr  N)$ satisfying 
\begin{equation}\label{meascomod}
\psi(p)\big(\theta_{\mathscr  M} (a_1 \otimes \cdots \otimes a_{n-1}\otimes m)\big)= \sum \theta_{\mathscr  N }\big( \phi(p_0)(a_1) \otimes \cdots \otimes \phi(p_{n-2})(a_{n-1}) \otimes \psi(p_{n-1})(m)\big)
\end{equation}
for all $\theta\in \mathcal O(n)$, $p \in P$, $a_1, \ldots, a_{n-1} \in \mathscr A$ and $ m \in \mathscr M$.
\end{defn}

\smallskip

We will now show that there is a universal measuring comodule over $(C, \phi)$ from $\mathscr M$ to $\mathscr N$.  Since $K$ is a field, we know that $C$ is flat over $K$. It follows that the category $C-Comod$ 
of left $C$-comodules is a Grothendieck category (see, for instance, \cite[$\S$ 1]{Wis}). From general properties of Grothendieck categories (see, for instance, \cite[Proposition 8.3.27]{KaSc}){, it now follows that  the forgetful functor $C-Comod \longrightarrow Vect_K$ must have a right adjoint, which we denote by $\mathfrak R_C: Vect_K \longrightarrow C-Comod$. In other words, for any $K$-vector space $V$ and any  $C$-comodule $P$, there is a natural isomorphism
\begin{equation}\label{adj2} Hom_K(P, V) \cong  C-Comod(P, \mathfrak R_C(V))
\end{equation}
 In particular, for any $V\in Vect_K$, there is a canonical morphism $\Lambda=\Lambda_C(V):\mathfrak R_C(V) \longrightarrow V$ of vector spaces.

\begin{Thm}\label{Th3.2}  Let $\mathscr A$ and $\mathscr B$ be $\mathcal{O}$-algebras. Let $\mathscr M$ be an $\mathscr A$-module, $\mathscr N$ be a $\mathscr B$-module and let $(C, \phi)$ be a measuring from $\mathscr A$ to $\mathscr  B$. Then, there
exists  a measuring $(C, \phi)$-comodule $(\mathcal Q_C=\mathcal Q_C(\mathscr M,\mathscr  N), \Psi: \mathcal Q_C \longrightarrow Hom_K(\mathscr M, \mathscr N))$ satisfying the following property: given any measuring  $(C, \phi)$-comodule $(P,\psi)$ from $\mathscr M$ to $\mathscr N$, there exists a unique morphism $\xi: P \longrightarrow \mathcal Q_C$ of $C$-comodules making the following diagram commutative.
\begin{equation*}
\begin{tikzcd}[column sep=1.5em]
\mathcal{Q}_C   \arrow[rr, , ""{name=U}, , "\Psi"]{} &&  Hom_K(\mathscr M, \mathscr N) \\
 & P \arrow[from=U, phantom, "\scalebox{1.5}{$\circlearrowright$}" description] \arrow{ur}{\psi} \arrow{ul}{\xi }
\end{tikzcd}
\end{equation*}
\end{Thm}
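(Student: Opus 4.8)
The plan is to transcribe the proof of Theorem \ref{htm5.4} into the comodule setting, using the cofree $C$-comodule provided by the adjunction \eqref{adj2} in place of the cofree coalgebra. Put $V := Hom_K(\mathscr M,\mathscr N)$ and let $(\mathfrak R_C(V),\Lambda = \Lambda_C(V))$ be the cofree $C$-comodule on $V$. I would define
$$\mathcal Q_C = \mathcal Q_C(\mathscr M,\mathscr N) := \sum P',$$
where the sum ranges over all sub-$C$-comodules $P'\subseteq\mathfrak R_C(V)$ for which the restriction $\Lambda|_{P'}:P'\longrightarrow Hom_K(\mathscr M,\mathscr N)$, equipped with the comodule structure inherited from $\mathfrak R_C(V)$, is a measuring $(C,\phi)$-comodule from $\mathscr M$ to $\mathscr N$ in the sense of Definition \ref{Defmescomod}; and set $\Psi := \Lambda|_{\mathcal Q_C}$.

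First I would check that $(\mathcal Q_C,\Psi)$ is itself a measuring $(C,\phi)$-comodule. Since $K$ is a field, $C-Comod$ is a Grothendieck category, so an arbitrary sum of sub-comodules of $\mathfrak R_C(V)$ is again a sub-comodule; in particular $\mathcal Q_C$ is a $C$-comodule whose iterated coaction is the restriction of that of $\mathfrak R_C(V)$. The key point is that condition \eqref{meascomod} is $K$-linear in $p$: for fixed $\theta\in\mathcal O(n)$, $a_1,\ldots,a_{n-1}\in\mathscr A$ and $m\in\mathscr M$, both sides of \eqref{meascomod} are obtained by applying a $K$-linear map $P\longrightarrow\mathscr N$ to $p$ --- on the left through $\psi$ followed by evaluation at $\theta_{\mathscr M}(a_1\otimes\cdots\otimes a_{n-1}\otimes m)$, and on the right through the $(n-1)$-fold coaction, then $\phi^{\otimes(n-1)}\otimes\psi$, then evaluation at $a_1\otimes\cdots\otimes a_{n-1}\otimes m$, and finally $\theta_{\mathscr N}$. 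Consequently, if $p\in\mathcal Q_C$ is written as a finite sum $p=\sum_j p^{(j)}$ with each $p^{(j)}$ in some ``good'' sub-comodule $P'_j$ --- so that the iterated coaction of $p^{(j)}$ lands in $C^{\otimes(n-1)}\otimes P'_j$ --- then \eqref{meascomod} for $\Psi$ at $p$ is the sum over $j$ of \eqref{meascomod} for $\Lambda|_{P'_j}$ at $p^{(j)}$, and hence holds.

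Next I would establish the universal property. Given a measuring $(C,\phi)$-comodule $(P,\psi)$ from $\mathscr M$ to $\mathscr N$, the adjunction \eqref{adj2} produces a unique morphism $\xi:P\longrightarrow\mathfrak R_C(V)$ of $C$-comodules with $\Lambda\circ\xi=\psi$. The image $\xi(P)$ is a sub-$C$-comodule of $\mathfrak R_C(V)$, and the corestriction $q:P\twoheadrightarrow\xi(P)$ is a surjective comodule morphism with $\Lambda|_{\xi(P)}\circ q=\psi$. Because $q$ intertwines the iterated coactions and is surjective, and \eqref{meascomod} holds for $(P,\psi)$, it also holds for $(\xi(P),\Lambda|_{\xi(P)})$: writing an arbitrary element of $\xi(P)$ as $q(p)$, the identity at $q(p)$ is the image under $q$ of the identity at $p$, and this is independent of the chosen preimage precisely because the identity for $(P,\psi)$ holds. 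Hence $\xi(P)$ is a good sub-comodule, so $\xi(P)\subseteq\mathcal Q_C$ and $\xi$ corestricts to a comodule morphism $\xi:P\longrightarrow\mathcal Q_C$ with $\Psi\circ\xi=\psi$. Uniqueness is immediate: any comodule morphism $P\to\mathcal Q_C$ with this property composes with the inclusion $\mathcal Q_C\hookrightarrow\mathfrak R_C(V)$ to give a lift of $\psi$ along $\Lambda$, which is unique by \eqref{adj2}.

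The step I expect to be the main obstacle is showing that the sum $\mathcal Q_C$ of good sub-comodules is again good. It rests on two facts used in tandem: that $C-Comod$ is a Grothendieck category, so that sums of subobjects behave well and the iterated coaction of an element of $\mathcal Q_C$ decomposes along the summand it comes from; and that \eqref{meascomod} is genuinely linear in the comodule variable, so that it passes to finite sums of such elements. With these in hand, the rest is a faithful transcription of the proof of Theorem \ref{htm5.4}.
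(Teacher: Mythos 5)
Your proposal is correct and follows essentially the same route as the paper: the paper also takes the cofree $C$-comodule $\mathfrak R_C(Hom_K(\mathscr M,\mathscr N))$ from the adjunction \eqref{adj2}, defines $\mathcal Q_C$ as the sum of all subcomodules on which $\Lambda$ restricts to a measuring comodule, and obtains $\xi$ by corestricting the adjoint map through its image. The only difference is that you spell out the verifications (linearity of \eqref{meascomod} in the comodule variable for the sum, and transfer of the measuring condition to $\xi(P)$) that the paper leaves implicit.
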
 

\begin{proof} We put $V=Hom_K(\mathscr M,\mathscr  N)$ and consider the left $C$-comodule $\mathfrak R_C:=\mathfrak R_C(V)$ along with the canonical morphism
$\Lambda =\Lambda_C(V):\mathfrak R_C(V) \longrightarrow V$. 
 We set $ \mathcal{Q}_C(\mathscr M, \mathscr N):= \sum Q$, where the sum is taken over all left $C$-subcomodules $Q$  of $\mathfrak R_C$ such that the restriction $\Lambda|_Q : Q \longrightarrow Hom_K(\mathscr M, \mathscr N)$ is a measuring comodule over $(C,\phi)$. We note that $ \mathcal{Q}_C(\mathscr M, \mathscr N)$ is a $C$-comodule and the restriction  $ \Psi = \Lambda |_{ \mathcal{Q}_C(\mathscr M, \mathscr N)}$ determines a measuring  comodule $ (\mathcal{Q}_C(\mathscr M, \mathscr N), \Psi)$ over $(C,\phi)$.

\smallskip
We now consider a measuring $(C, \phi)$-comodule $(P, \psi)$ from $\mathscr M$ to $\mathscr N$. Using the adjunction in \eqref{adj2}, the morphism $\psi: P \longrightarrow Hom_K(\mathscr M, \mathscr N)$ corresponds to a unique $C$-comodule morphism  $\xi : P\longrightarrow \mathfrak R_C=\mathfrak R_C(Hom_K(\mathscr M, \mathscr N))$ such that the following diagram commutes:

\begin{equation*}
\begin{tikzcd}[column sep=1.5em]
\mathfrak R_C  \arrow[rr, , ""{name=U}, , "\Lambda"]{} && Hom_K(\mathscr M, \mathscr N) \\
 & P\arrow[from=U, phantom, "\scalebox{1.5}{$\circlearrowright$}" description] \arrow{ur}{\psi} \arrow{ul}{\xi}
\end{tikzcd}
\end{equation*}

In other words,  $\Lambda~ |_{\xi(P)}$ is a measuring $(C, \phi)$-comodule  and it now follows that the subcomodule $\xi(P) \subseteq \mathcal{Q}_C(\mathscr M,\mathscr  N)$. Hence,  we have a comodule morphism  $\xi: P \longrightarrow \mathcal{Q}_C(\mathscr M, \mathscr N)$ making the following diagram commute. 

\begin{equation*}
\begin{tikzcd}[column sep=1.5em]
\mathcal{Q}_C(\mathscr M,\mathscr  N)  \arrow[rr, , ""{name=U}, , "\Psi"]{} && Hom_K(\mathscr M,\mathscr  N) \\
 & P \arrow[from=U, phantom, "\scalebox{1.5}{$\circlearrowright$}" description] \arrow{ur}{\psi} \arrow{ul}{\xi}
\end{tikzcd}
\end{equation*}
\end{proof}

We will now prove a composition result for measuring comodules.

\begin{prop}\label{P3.3} Let $\mathscr  A$, $\mathscr  B$ and $\mathscr  E$ be $\mathcal{O}$-algebras and let $\mathscr  M$, $\mathscr  N$ and $\mathscr  L$ be modules over $\mathscr  A$, $\mathscr  B$ and $\mathscr  E$ respectively. Suppose that we are given the following:

\smallskip

(1) A measuring $(C,\phi)$ from $ \mathscr  A$ to $\mathscr   B$ and a measuring $(D,\phi')$ from $\mathscr  B$ to $\mathscr  E$.

\smallskip

(2) A measuring $(C,\phi)$-comodule $(P, \psi)$ with $\psi: P\longrightarrow Hom_K(\mathscr  M, \mathscr  N)$.

\smallskip

(3) A measuring $(D,\phi')$-comodule $(Q, \psi')$ with $\psi':Q\longrightarrow Hom_K(\mathscr  N,\mathscr   L)$. 

\smallskip
Then, the induced morphism
\begin{equation*}
\tau : P\otimes Q\xrightarrow{\psi\otimes \psi'} Hom_K(\mathscr  M, \mathscr  N)\otimes Hom_K(\mathscr  N, \mathscr  L)\overset{\circ}{\longrightarrow }
Hom_K(\mathscr  M,\mathscr   L)
\end{equation*} is a measuring $C\otimes D$-comodule.

\end{prop}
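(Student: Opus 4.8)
The plan is to verify directly that the composite $\tau = \circ \circ (\psi \otimes \psi')$ satisfies the defining identity \eqref{meascomod} for a measuring comodule over the measuring $(C \otimes D, \varphi)$, where $\varphi$ is the composite measuring from $\mathscr A$ to $\mathscr E$ furnished by Proposition \ref{Prp2.3}. First I would set up the comodule structure on $P \otimes Q$: recall that $P \otimes Q$ is a left $C \otimes D$-comodule with iterated costructure maps obtained by interleaving those of $P$ and $Q$, exactly as in the description of iterated coproducts on $C \otimes D$ given in the proof of Proposition \ref{Prp2.3}. Concretely, if $\omega_P(p) = \sum p_0 \otimes p_1$ and $\omega_Q(q) = \sum q_0 \otimes q_1$, then the relevant iterated costructure map on $p \otimes q$ produces terms $(p_0 \otimes q_0), (p_1 \otimes q_1), \ldots$ in $(C \otimes D)^{\otimes \bullet} \otimes (P \otimes Q)$, so that $\varphi$ applied to the coalgebra legs is $\phi'(q_i) \circ \phi(p_i)$ and $\tau$ applied to the comodule leg is $\psi'(q_{n-1}) \circ \psi(p_{n-1})$.

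Next I would carry out the computation of $\tau(p \otimes q)\big(\theta_{\mathscr M}(a_1 \otimes \cdots \otimes a_{n-1} \otimes m)\big)$. By definition this equals $\psi'(q)\Big(\psi(p)\big(\theta_{\mathscr M}(a_1 \otimes \cdots \otimes m)\big)\Big)$. Applying the measuring comodule identity for $(P,\psi)$ over $(C,\phi)$ to the inner expression rewrites it as $\sum \psi'(q)\Big(\theta_{\mathscr N}\big(\phi(p_0)(a_1) \otimes \cdots \otimes \phi(p_{n-2})(a_{n-1}) \otimes \psi(p_{n-1})(m)\big)\Big)$. Now apply the measuring comodule identity for $(Q,\psi')$ over $(D,\phi')$ to each such term; this pulls $\psi'(q)$ through $\theta_{\mathscr N}$ and turns it into $\theta_{\mathscr L}$, producing $\sum\sum \theta_{\mathscr L}\big(\phi'(q_0)(\phi(p_0)(a_1)) \otimes \cdots \otimes \phi'(q_{n-2})(\phi(p_{n-2})(a_{n-1})) \otimes \psi'(q_{n-1})(\psi(p_{n-1})(m))\big)$. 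Recognizing $\phi'(q_i) \circ \phi(p_i) = \varphi(p_i \otimes q_i)$ and $\psi'(q_{n-1}) \circ \psi(p_{n-1}) = \tau(p_{n-1} \otimes q_{n-1})$, and matching the double sum with the iterated costructure map on $P \otimes Q$, yields exactly the required identity for $\tau$ as a measuring $(C \otimes D, \varphi)$-comodule. The case $n = 1$ (the unital piece, involving $u_{\mathscr A}$, $u_{\mathscr B}$, $u_{\mathscr E}$) is handled by the counit compatibility, just as in the final lines of the proof of Proposition \ref{Prp2.3}.

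The main obstacle, and really the only non-formal point, is bookkeeping the indexing: one must be careful that the iterated comodule structure map of $P \otimes Q$ in degree $n-1$ genuinely decomposes as the interleaving of $\Delta^{n-2}$-type iterates of $\omega_P$ and $\omega_Q$, so that the two successive applications of the comodule axioms (first for $P$, producing $n-1$ legs, then for $Q$ on each resulting $\theta_{\mathscr N}$-term) line up index-by-index. This is the comodule analogue of the coassociativity bookkeeping already used silently in Proposition \ref{Prp2.3}, and invoking the coassociativity/coaction axioms $(\mathrm{id} \otimes \omega)\circ \omega = (\Delta \otimes \mathrm{id})\circ \omega$ repeatedly makes it precise. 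Once the indices are aligned the identity is immediate, so I would present the computation as a short display chain with a remark that cocommutativity is not needed here and that the composition on underlying comodules is visibly associative and unital, consistent with the enrichment picture.
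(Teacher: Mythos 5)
Your proposal is correct and follows essentially the same route as the paper's proof: compute $\tau(p\otimes q)$ on $\theta_{\mathscr M}(a_1\otimes\cdots\otimes a_{n-1}\otimes m)$, apply the measuring comodule identity first for $(P,\psi)$ over $(C,\phi)$ and then for $(Q,\psi')$ over $(D,\phi')$, and identify $\phi'(q_i)\circ\phi(p_i)=\varphi(p_i\otimes q_i)$ and $\psi'(q_{n-1})\circ\psi(p_{n-1})=\tau(p_{n-1}\otimes q_{n-1})$ using the composite measuring $\varphi$ from Proposition \ref{Prp2.3}. Your added remarks on the $C\otimes D$-comodule bookkeeping and on the fact that cocommutativity is not needed are consistent with the paper, which carries out the same display chain without further comment.
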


\begin{proof}
From Proposition \ref{Prp2.3}, we know that there is a measuring $\varphi: C\otimes D\longrightarrow Hom_K(\mathscr  A,\mathscr   E)$. We now consider an element $p\otimes q\in P\otimes Q$,  along with  $\theta\in \mathcal O(n)$, $a_1, \cdots, a_{n-1} \in \mathscr  A$ and $m\in \mathscr  M$. Then, we have
\begin{equation*}
\begin{array}{ll}
\tau(p\otimes q)(\theta_{\mathscr  M}(a_1 \otimes \cdots \otimes a_{n-1}\otimes m))  & = \psi'(q) \circ \psi(p) \big(\theta_{\mathscr  M}(a_1 \otimes \cdots \otimes a_{n-1}\otimes m)\big)\\
& =\psi'(q)\Big( \sum \theta_{\mathscr  N }\big( \phi(p_0)(a_1) \otimes \cdots \otimes \phi(p_{n-2})(a_{n-1}) \otimes \psi(p_{n-1})(m)\big)\Big) \\
& =\sum \psi'(q)\Big( \theta_{\mathscr  N }\big( \phi(p_0)(a_1) \otimes \cdots \otimes \phi(p_{n-2})(a_{n-1}) \otimes \psi(p_{n-1})(m)\big)\Big) \\
&=\sum \sum \theta_{\mathscr  L}\Big(\phi'(q_0)\big(\phi(p_0)(a_1)\big) \otimes \cdots \otimes \psi'(q_{n-1})\big(\psi(p_{n-1})(m)\big)\Big) \\
&=\sum \sum \theta_{\mathscr  L}\Big(\varphi(p_0 \otimes q_0)(a_1) \otimes \cdots  \otimes \tau(p_{n-1}\otimes q_{n-1})(m)\Big) \\
\end{array}
\end{equation*} This proves the result. 
\end{proof}

\begin{Thm}\label{T3.4} Let $\mathscr M$, $\mathscr N$ and $\mathscr L$ be modules over $\mathcal{O}$-algebras $\mathscr A$, $\mathscr B$
and $\mathscr E$ respectively. Suppose that we are given a measuring $(C,\phi)$ from $\mathscr   A$ to $ \mathscr  B$ and a measuring $(D,\phi')$ from $\mathscr  B$ to $\mathscr  E$.

\smallskip

Then, there is a canonical morphism of left $(C\otimes D)$-comodules:
\begin{equation*}
\mathcal Q_C(\mathscr  M, \mathscr  N)\otimes \mathcal Q_D(\mathscr  N, \mathscr  L)\longrightarrow \mathcal Q_{C\otimes D}(\mathscr  M, \mathscr  L)
\end{equation*} 

\end{Thm}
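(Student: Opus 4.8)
The plan is to deduce this directly from the composition result for measuring comodules (Proposition \ref{P3.3}) together with the universal property of the universal measuring comodule (Theorem \ref{Th3.2}), exactly in parallel with the way Corollary \ref{Cor2.1} was obtained from Proposition \ref{Prp2.3}.

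First I would recall that, by Theorem \ref{Th3.2}, the universal measuring comodules come with structure maps $\Psi_C:\mathcal Q_C(\mathscr M,\mathscr N)\longrightarrow Hom_K(\mathscr M,\mathscr N)$, which is a measuring $(C,\phi)$-comodule, and $\Psi_D:\mathcal Q_D(\mathscr N,\mathscr L)\longrightarrow Hom_K(\mathscr N,\mathscr L)$, which is a measuring $(D,\phi')$-comodule. Recall also from Proposition \ref{Prp2.3} that the composite $\varphi:C\otimes D\longrightarrow Hom_K(\mathscr A,\mathscr E)$ of $(C,\phi)$ and $(D,\phi')$ is a measuring from $\mathscr A$ to $\mathscr E$, and that $\mathcal Q_{C\otimes D}(\mathscr M,\mathscr L)$ is by definition the universal measuring comodule over this composite measuring $(C\otimes D,\varphi)$.

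Next I would apply Proposition \ref{P3.3} with $(P,\psi)=(\mathcal Q_C(\mathscr M,\mathscr N),\Psi_C)$ and $(Q,\psi')=(\mathcal Q_D(\mathscr N,\mathscr L),\Psi_D)$. This produces the composite
\begin{equation*}
\tau:\mathcal Q_C(\mathscr M,\mathscr N)\otimes \mathcal Q_D(\mathscr N,\mathscr L)\xrightarrow{\Psi_C\otimes \Psi_D}Hom_K(\mathscr M,\mathscr N)\otimes Hom_K(\mathscr N,\mathscr L)\overset{\circ}{\longrightarrow}Hom_K(\mathscr M,\mathscr L),
\end{equation*}
together with the assertion that $\tau$ is a measuring $(C\otimes D,\varphi)$-comodule, where $\mathcal Q_C(\mathscr M,\mathscr N)\otimes \mathcal Q_D(\mathscr N,\mathscr L)$ carries its natural left $(C\otimes D)$-comodule structure (the comodule analogue of the tensor coalgebra structure of Proposition \ref{Prp2.3}, with $\omega(p\otimes q)=\sum\sum (p_0\otimes q_0)\otimes(p_1\otimes q_1)$). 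Finally, invoking the universal property in Theorem \ref{Th3.2} for the measuring $(C\otimes D,\varphi)$ and the modules $\mathscr M$, $\mathscr L$, the pair $(\mathcal Q_C(\mathscr M,\mathscr N)\otimes \mathcal Q_D(\mathscr N,\mathscr L),\tau)$ factors uniquely through $\Psi_{C\otimes D}:\mathcal Q_{C\otimes D}(\mathscr M,\mathscr L)\longrightarrow Hom_K(\mathscr M,\mathscr L)$ by a morphism of $(C\otimes D)$-comodules
\begin{equation*}
\xi:\mathcal Q_C(\mathscr M,\mathscr N)\otimes \mathcal Q_D(\mathscr N,\mathscr L)\longrightarrow \mathcal Q_{C\otimes D}(\mathscr M,\mathscr L),
\end{equation*}
and this $\xi$ is the asserted canonical morphism.

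The argument is essentially formal once Proposition \ref{P3.3} is available, so I do not expect a serious obstacle; the only points needing care are (i) writing down the left $(C\otimes D)$-comodule structure on $\mathcal Q_C(\mathscr M,\mathscr N)\otimes \mathcal Q_D(\mathscr N,\mathscr L)$ and checking $(C\otimes D)$-colinearity of $\tau$, both routine, and (ii) making sure the object $\mathcal Q_{C\otimes D}(\mathscr M,\mathscr L)$ appearing in the statement is read as the universal measuring comodule over the composite measuring $(C\otimes D,\varphi)$ of Proposition \ref{Prp2.3}, so that its universal property genuinely applies. If desired one could further note, in parallel with the discussion following Corollary \ref{Cor2.1}, that the morphisms $\xi$ are compatible with the coalgebra compositions $\mathcal M(\mathscr A,\mathscr B)\otimes \mathcal M(\mathscr B,\mathscr E)\to \mathcal M(\mathscr A,\mathscr E)$, but this is not needed for the bare statement.
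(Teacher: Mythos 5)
Your proposal is correct and follows the paper's own argument exactly: apply Proposition \ref{P3.3} to the universal comodules $(\mathcal Q_C(\mathscr M,\mathscr N),\Psi_C)$ and $(\mathcal Q_D(\mathscr N,\mathscr L),\Psi_D)$ to obtain a measuring $(C\otimes D)$-comodule, then invoke the universal property of $\mathcal Q_{C\otimes D}(\mathscr M,\mathscr L)$ from Theorem \ref{Th3.2}. The extra points you flag (the explicit tensor comodule structure and colinearity of $\tau$) are routine and are implicitly subsumed in the paper's appeal to Proposition \ref{P3.3}.
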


\begin{proof}
Applying Proposition \ref{P3.3} to the measuring $(C,\phi)$-comodule $\mathcal Q_C(\mathscr  M, \mathscr  N)$ and the measuring
$(D,\phi')$-comodule $\mathcal Q_D(\mathscr  N, \mathscr  L)$, we obtain a measuring $C\otimes D$-comodule $\mathcal Q_C(\mathscr  M, \mathscr  N)\otimes \mathcal Q_D(\mathscr  N, \mathscr  L)$. The universal property of $ \mathcal Q_{C\otimes D}(\mathscr  M, \mathscr  L)$ from 
Theorem \ref{Th3.2} now gives a $(C\otimes D)$-comodule morphism $\mathcal Q_C(\mathscr  M ,\mathscr   N)\otimes \mathcal Q_D(\mathscr  N,\mathscr  L)\longrightarrow \mathcal Q_{C\otimes D}(\mathscr  M, \mathscr  L)$. This proves the result.

\end{proof}

In Section 6, we mentioned (see \cite[$\S$ 1.6]{GK}) that modules over an $\mathcal O$-algebra $\mathscr A$ correspond to left modules over the universal  enveloping
algebra $U_{\mathcal O}(\mathscr A)$. This may be made explicit as follows : if $\mathscr M$ is an $\mathscr A$-module in the sense of Definition \ref{oalgmod}, the corresponding
left module $U_{\mathscr A}(\mathscr M)$ over $U_{\mathcal O}(\mathscr A)$ is identical to $\mathscr M$ as a vector space, with left $U_{\mathcal O}(\mathscr A)$-action defined by setting
\begin{equation}\label{uam}
Z(\theta; a_1,...,a_m)\cdot x= \theta_{\mathscr M}(a_1\otimes ...\otimes a_m\otimes x)=\alpha_{\mathscr M}(m+1)(\theta\otimes a_1\otimes ...\otimes a_m\otimes x)
\end{equation} for $\theta\in \mathcal O(m+1)$, $a_1\otimes ...\otimes a_m\in \mathscr A^{\otimes m}$ and $x\in \mathscr M$. 
We will now show that measurings of modules over an $\mathcal O$-algebra $\mathscr A$ correspond to measurings of modules
over $U_{\mathcal O}(\mathscr A)$. 

\begin{prop}\label{P7.6or} Let $\mathscr  A$, $\mathscr  B$ be $\mathcal O$-algebras. Let $C$ be a cocommutative coalgebra. Let $\phi:C\longrightarrow
Hom_K(\mathscr  A,\mathscr  B)$ be a measuring and $U_{\mathcal O}(\phi):C\longrightarrow Hom_K(U_{\mathcal O}(\mathscr  A),U_{\mathcal O}(\mathscr  B))$ the corresponding measuring of associative algebras. 

\smallskip
Let $\mathscr  M$ be an $\mathscr  A$-module and $\mathscr  N$ be a $\mathscr  B$-module. Let $P$ be a left $C$-comodule and $\psi:P\longrightarrow Hom_K(\mathscr  M,\mathscr  N)$ a morphism.  Then,
$(P,\psi)$ is a measuring comodule over $(C,\phi:C\longrightarrow
Hom_K(\mathscr  A,\mathscr  B))$ if and only if $(P,\psi)$ is  a measuring comodule over $(C,U_{\mathcal O}(\phi):C\longrightarrow Hom_K(U_{\mathcal O}(\mathscr  A),U_{\mathcal O}(\mathscr  B)))$. 
\end{prop}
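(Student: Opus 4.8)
The plan is to observe that, under the correspondence \eqref{uam} between $\mathscr A$-modules and left $U_{\mathcal O}(\mathscr A)$-modules, the two measuring comodule conditions are the same system of $K$-linear equations written in two different notations; both implications then follow simultaneously.

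First I would write out the two conditions explicitly. By Definition \ref{Defmescomod}, $(P,\psi)$ is a measuring comodule over $(C,\phi)$ precisely when, for every $\theta\in\mathcal O(n)$ with $n\ge 1$, every $p\in P$, all $a_1,\dots,a_{n-1}\in\mathscr A$ and every $m\in\mathscr M$,
\[
\psi(p)\bigl(\theta_{\mathscr M}(a_1\otimes\cdots\otimes a_{n-1}\otimes m)\bigr)=\sum\theta_{\mathscr N}\bigl(\phi(p_0)(a_1)\otimes\cdots\otimes\phi(p_{n-2})(a_{n-1})\otimes\psi(p_{n-1})(m)\bigr).
\]
Since $U_{\mathcal O}(\mathscr A)$ and $U_{\mathcal O}(\mathscr B)$ are ordinary associative algebras, $U_{\mathscr A}(\mathscr M)$ and $U_{\mathscr B}(\mathscr N)$ coincide with $\mathscr M$ and $\mathscr N$ as $K$-vector spaces, and $U_{\mathcal O}(\phi)$ is a measuring of algebras by Theorem \ref{rP6.9}, a measuring comodule over $(C,U_{\mathcal O}(\phi))$ in the sense of \eqref{xeq1.2ij} is a left $C$-comodule $P$ together with a map $\psi\colon P\to Hom_K(\mathscr M,\mathscr N)$ such that
\[
\psi(p)(r\cdot x)=\sum U_{\mathcal O}(\phi)(p_0)(r)\cdot\psi(p_1)(x)\qquad (r\in U_{\mathcal O}(\mathscr A),\ x\in\mathscr M,\ p\in P).
\]
As both sides of this equation are $K$-linear in $r$, and $U_{\mathcal O}(\mathscr A)$ is spanned by the symbols $Z(\theta;a_1,\dots,a_m)$ with $\theta\in\mathcal O(m+1)$, it is equivalent to require it only for $r=Z(\theta;a_1,\dots,a_m)$, with $x\in\mathscr M$ arbitrary.

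Next I would carry out the translation. For $r=Z(\theta;a_1,\dots,a_m)$, relation \eqref{uam} gives $r\cdot x=\theta_{\mathscr M}(a_1\otimes\cdots\otimes a_m\otimes x)$, so $\psi(p)(r\cdot x)$ equals $\psi(p)\bigl(\theta_{\mathscr M}(a_1\otimes\cdots\otimes a_m\otimes x)\bigr)$, which is the left-hand side of the operadic condition with $n=m+1$. For the right-hand side, the definition \eqref{eq6.15rx} of $U_{\mathcal O}(\phi)$ gives $U_{\mathcal O}(\phi)(p_0)\bigl(Z(\theta;a_1,\dots,a_m)\bigr)=Z\bigl(\theta;(p_0)_1\cdot a_1,\dots,(p_0)_m\cdot a_m\bigr)$ with $\Delta^{m-1}(p_0)=\sum(p_0)_1\otimes\cdots\otimes(p_0)_m$, and then \eqref{uam} again rewrites $Z(\theta;(p_0)_1\cdot a_1,\dots,(p_0)_m\cdot a_m)\cdot\psi(p_1)(x)$ as $\theta_{\mathscr N}\bigl((p_0)_1\cdot a_1\otimes\cdots\otimes(p_0)_m\cdot a_m\otimes\psi(p_1)(x)\bigr)$. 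The one remaining point is to match Sweedler indices: by coassociativity of the $C$-coaction $\omega$ on $P$ one has $(\Delta^{m-1}\otimes\mathrm{id}_P)\circ\omega=\omega^{(m)}$, the $m$-fold iterated coaction, that is, $\sum(p_0)_1\otimes\cdots\otimes(p_0)_m\otimes p_1=\sum p_0\otimes\cdots\otimes p_{m-1}\otimes p_m$ in $C^{\otimes m}\otimes P$; substituting this identity turns the right-hand side into $\sum\theta_{\mathscr N}\bigl(\phi(p_0)(a_1)\otimes\cdots\otimes\phi(p_{m-1})(a_m)\otimes\psi(p_m)(x)\bigr)$, which is precisely the right-hand side of the operadic condition with $n=m+1$.

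Thus, for each $\theta\in\mathcal O(m+1)$, the operadic equation \eqref{meascomod} taken with $n=m+1$ and the associative equation for $r=Z(\theta;a_1,\dots,a_m)$ are one and the same; letting $\theta$ and $m$ vary, the two families of conditions are identical, which proves the equivalence in both directions at once. I do not expect a genuine obstacle here: the only points that require a moment of care are the reduction of the associative condition to the spanning symbols $Z(\theta;\dots)$ by linearity, and the routine iterated-coaction identity $(\Delta^{m-1}\otimes\mathrm{id}_P)\circ\omega=\omega^{(m)}$, which follows from comodule coassociativity.
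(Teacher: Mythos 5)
Your proposal is correct and follows essentially the same route as the paper: both rest on the identity $Z(\theta;a_1,\dots,a_m)\cdot x=\theta_{\mathscr M}(a_1\otimes\cdots\otimes a_m\otimes x)$ from \eqref{uam}, the formula \eqref{eq6.15rx} for $U_{\mathcal O}(\phi)$, linearity over the spanning symbols $Z(\theta;\dots)$, and coassociativity of the coaction to match Sweedler indices. The only cosmetic difference is that you observe the two conditions are literally the same system of equations and conclude both implications at once, whereas the paper writes out the two directions separately with the identical computation.
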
 

\begin{proof} We begin by supposing that $(P,\psi)$ is a measuring comodule over $(C,\phi:C\longrightarrow
Hom_K(\mathscr  A,\mathscr  B))$ from $\mathscr  M$ to $\mathscr  N$. We consider elements $Z(\theta;a_1,...,a_m)\in U_{\mathcal O}(\mathscr  A)$, $x\in M$ and $p\in P$. Then, we have
\begin{equation*}
\begin{array}{ll}
\psi(p)(Z(\theta;a_1,...,a_m)\cdot x)&=\psi(p)(\theta_{\mathscr  M}(a_1\otimes ...\otimes a_m\otimes x))\\
&= \sum \theta_{\mathscr  N }\big( \phi(p_0)(a_1) \otimes \cdots \otimes \phi(p_{m-1})(a_{m}) \otimes \psi(p_{m})(x)\big)\\
&\\
\sum ( U_{\mathcal O}(\phi)(p_0)(Z(\theta;a_1,...,a_m)))\cdot (\psi(p_1)(x)) & =\sum Z(\theta;\phi(p_0)(a_1)\otimes...\otimes \phi(p_{m-1})(a_{m}))\cdot (\psi(p_{m})(x))\\
&=  \sum \theta_{\mathscr  N} \big( \phi(p_0)(a_1) \otimes \cdots \otimes \phi(p_{m-1})(a_{m}) \otimes \psi(p_{m})(x)\big)\\
\end{array}
\end{equation*} Conversely, suppose that $(P,\psi)$ is a measuring comodule over   $(C,U_{\mathcal O}(\phi):C\longrightarrow Hom_K(U_{\mathcal O}(\mathscr  A),U_{\mathcal O}(\mathscr  B))))$ from $U_{\mathscr  A}(\mathscr  M)$
to $U_{\mathscr  B}(\mathscr  N)$.  We consider elements
$\theta\in \mathcal O(m+1)$, $p \in P$, $a_1, \ldots, a_{m} \in \mathscr A$ and $ x \in \mathscr M$. Then, we have
\begin{equation*}
\begin{array}{ll}
\psi(p)\big(\theta_{\mathscr M} (a_1 \otimes \cdots \otimes a_{m}\otimes x)\big) & =\psi(p)(Z(\theta; a_1,...,a_m)\cdot x)\\
&=\sum U_{\mathcal O}(\phi)(p_0)(Z(\theta; a_1,...,a_m))\cdot \psi(p_1)(x)\\
&=\sum Z(\theta;\phi(p_0)(a_1)\otimes ...\otimes \phi(p_{m-1})(a_{m}))\cdot \psi(p_{m})(x) \\
&=\sum \theta_{\mathscr N}(\phi(p_0)(a_1)\otimes ...\otimes \phi(p_{m-1})(a_{m})\otimes \psi(p_{m})(x))
\end{array}
\end{equation*} This proves the result.

\end{proof}

In the setup of Proposition \ref{P7.6or}, $U_{\mathscr  A}(\mathscr  M)$ is an ordinary left module over the associative algebra $U_{\mathcal O}(\mathscr  A)$ and  $U_{\mathscr  B}(\mathscr  N)$ is an ordinary left module over the associative algebra $U_{\mathcal O}(\mathscr  B)$. Following Batchelor \cite{Bat}, we may therefore consider a universal measuring comodule $(Q_C(U_{\mathscr  A}(\mathscr  M),U_B(N)),\psi:Q_C(U_{\mathscr  A}(\mathscr  M),U_{\mathscr  B}(\mathscr  N))
\longrightarrow Hom_K(U_{\mathscr  A}(\mathscr  M),U_{\mathscr  B}(\mathscr  N)))$ over the measuring $(C,U_{\mathcal O}(\phi))$ of associative algebras. 

\begin{Thm}\label{T7.7}  Let $\mathscr  A$, $\mathscr  B$ be $\mathcal O$-algebras. Let $C$ be a cocommutative coalgebra. Let $\phi:C\longrightarrow
Hom_K(\mathscr A,\mathscr B)$ be a measuring and $U_{\mathcal O}(\phi):C\longrightarrow Hom_K(U_{\mathcal O}(\mathscr  A),U_{\mathcal O}(\mathscr  B)))$ the corresponding measuring of associative algebras. 

\smallskip
Let $\mathscr M$ be an $\mathscr A$-module and $\mathscr N$ be a $\mathscr B$-module. Then, we have an isomorphism of $C$-comodules
\begin{equation}
\mathcal Q_C(\mathscr M,\mathscr  N)=Q_C(U_{\mathscr A}(\mathscr M),U_{\mathscr B}(\mathscr N))
\end{equation} 

\end{Thm}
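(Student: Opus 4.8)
The plan is to deduce the identification purely formally, from the two relevant universal properties together with Proposition~\ref{P7.6or}. The starting observation is that, since $U_{\mathscr A}(\mathscr M)=\mathscr M$ and $U_{\mathscr B}(\mathscr N)=\mathscr N$ as $K$-vector spaces, we have $Hom_K(U_{\mathscr A}(\mathscr M),U_{\mathscr B}(\mathscr N))=Hom_K(\mathscr M,\mathscr N)$ as a single $K$-vector space, call it $V$. Thus on both sides of the claimed identity we are looking at pairs $(P,\psi)$ consisting of a left $C$-comodule $P$ and a $K$-linear map $\psi:P\longrightarrow V$.

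First I would record the universal properties in a directly comparable form. By Theorem~\ref{Th3.2}, the pair $(\mathcal Q_C(\mathscr M,\mathscr N),\Psi)$ is terminal among all pairs $(P,\psi)$ for which $(P,\psi)$ is a measuring comodule over $(C,\phi:C\longrightarrow Hom_K(\mathscr A,\mathscr B))$, terminality meaning that every such $(P,\psi)$ admits a unique morphism of $C$-comodules to $\mathcal Q_C(\mathscr M,\mathscr N)$ commuting with the maps to $V$. On the other hand, the universal measuring comodule of Batchelor~\cite{Bat} (equivalently, Theorem~\ref{Th3.2} applied to the associative operad) makes $(Q_C(U_{\mathscr A}(\mathscr M),U_{\mathscr B}(\mathscr N)),\psi)$ terminal among all pairs $(P,\psi)$ for which $(P,\psi)$ is a measuring comodule over $(C,U_{\mathcal O}(\phi):C\longrightarrow Hom_K(U_{\mathcal O}(\mathscr A),U_{\mathcal O}(\mathscr B)))$, again with respect to $C$-comodule morphisms commuting with the maps to $V$.

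The decisive step is then to invoke Proposition~\ref{P7.6or}: for a left $C$-comodule $P$ and a linear map $\psi:P\longrightarrow V$, the pair $(P,\psi)$ is a measuring comodule over $(C,\phi)$ if and only if it is a measuring comodule over $(C,U_{\mathcal O}(\phi))$. Hence the two classes of pairs appearing in the two universal properties above are literally the same class, and both $\mathcal Q_C(\mathscr M,\mathscr N)$ and $Q_C(U_{\mathscr A}(\mathscr M),U_{\mathscr B}(\mathscr N))$ are terminal objects of one and the same category --- the category whose objects are these measuring comodules and whose morphisms are the $C$-comodule maps commuting with the structure map to $V$. Since a terminal object is unique up to a unique isomorphism, there is a unique isomorphism of $C$-comodules between $\mathcal Q_C(\mathscr M,\mathscr N)$ and $Q_C(U_{\mathscr A}(\mathscr M),U_{\mathscr B}(\mathscr N))$ compatible with $\Psi$ and $\psi$, which is the asserted identification. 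In fact, if one builds $Q_C(U_{\mathscr A}(\mathscr M),U_{\mathscr B}(\mathscr N))$ exactly as in the proof of Theorem~\ref{Th3.2}, namely as the sum of all subcomodules $Q$ of $\mathfrak R_C(V)$ on which $\Lambda$ restricts to a measuring comodule, then Proposition~\ref{P7.6or} shows the two sums range over the same set of subcomodules, so the identification is literal equality of subcomodules of $\mathfrak R_C(V)$.

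I do not anticipate a genuine obstacle here. The only points that need care are bookkeeping: one must check that both universal properties are formulated with respect to morphisms of comodules over the same coalgebra $C$ (they are, since the coalgebra is $C$ in both constructions), and one must make sure Batchelor's universal measuring comodule exists in the required generality, which is subsumed by Theorem~\ref{Th3.2} for the associative operad. Everything else is a direct application of Proposition~\ref{P7.6or}.
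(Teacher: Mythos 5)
Your argument is correct and is essentially the paper's own proof: the paper simply cites Proposition \ref{P7.6or} together with the universal property of Theorem \ref{Th3.2}, which is exactly the terminal-object (or equal-sum-of-subcomodules) argument you spell out. No issues.
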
 

\begin{proof} This is clear from Proposition \ref{P7.6or} and the universal property in Theorem \ref{Th3.2}. 
\end{proof} 

\section{Universal measurings and the Sweedler product}

In \cite[$\S$ 3.4]{AJ}, Anel and Joyal introduce the notion of Sweedler product as follows : given a coalgebra $C$ and an associative algebra $A$, a measuring $\phi:C\longrightarrow Hom_K(A,E)$ is said to be be universal if given any  measuring $\phi':C\longrightarrow Hom_K(A,B)$, there exists a unique morphism $f:E\longrightarrow B$ of algebras such that 
$\phi'=Hom(A,f)\circ \phi$. In \cite[Theorem 3.4.1]{AJ}, it is shown that such a universal measuring always exists and the algebra $E$ is referred to as the Sweedler product $E=C\rhd A$. Further, the functor
\begin{equation}\label{sprodw}
C\rhd - : Alg_K\longrightarrow Alg_K 
\end{equation} is left adjoint to the functor $[C,-]:Alg_K\longrightarrow Alg_K$ that takes an algebra $A$ to its convolution algebra $Hom(C,A)$ with respect to $C$. 

\smallskip
In this section, we will   construct the Sweedler product $C\rhd\mathscr A$ of a coalgebra $C$ and an algebra $\mathscr A$ over the operad $\mathcal O$. Given a vector space $V$,
we can form the free $\mathcal O$-algebra $\mathscr F(V)$ over $V$ (see, for instance, \cite[$\S$ 5.2.5]{LoVa}) such that we have natural isomorphisms 
\begin{equation}\label{fadjoint}
Alg_{\mathcal O}(\mathscr F(V),\mathscr B) \cong Vect_K(V,\mathscr B)
\end{equation} for any $\mathcal O$-algebra $\mathscr B$. In particular, there is a canonical morphism $V\longrightarrow \mathscr F(V)$. We also recall the following definition.

\begin{defn}\label{D8.1} (see, for instance, \cite[Definition 1.8]{Liv}) Let $\mathscr B$ be an $\mathcal O$-algebra. An ideal $\mathscr I$ in $\mathscr B$ is a sub-vector space 
$\mathscr I\subseteq \mathscr B$ having the property that
\begin{equation}\label{ideall}
\theta(b_1,...,b_{n-1},x)\in \mathscr I\qquad \forall\textrm{ }\theta\in \mathcal O(n),  b_i\in \mathscr B, x\in \mathscr I
\end{equation}

\end{defn}

 We are now ready to construct   the Sweedler product $C\rhd \mathscr A$ for a coalgebra $C$ and an $\mathcal O$-algebra $\mathscr A$. We start with the  free $\mathcal O$-algebra $\mathscr F(C\otimes \mathscr A)$ over the vector space $C\otimes\mathscr A$. For an element $c\otimes a\in C\otimes \mathscr A$, we will denote by $c\rhd a$ its image under the canonical morphism $C\otimes \mathscr A\longrightarrow \mathscr F(C\otimes \mathscr A)$.
 
 \smallskip
  We now consider the smallest ideal $\mathscr J \subseteq \mathscr F(C\otimes \mathscr A)$ containing the elements
 \begin{equation}\label{cproda}
 \begin{array}{c}
c\rhd (u_{\mathscr A}(\theta_0))-\epsilon(c)u_{\mathscr F(C\otimes \mathscr A)}(\theta_0)\qquad c\rhd \theta(a_1,...,a_n) - \sum \theta (c_1\rhd a_1,...,c_n\rhd a_n)
 \end{array}
 \end{equation} for all $c\in C$, $a_i\in A$, $\theta_0\in \mathcal O(0)$, $\theta\in \mathcal O(n)$, $n\geq 0$. We set $C\rhd\mathscr A$ to be the quotient $\mathscr F(C\otimes
 \mathscr A)/\mathscr J$. By abuse of notation, the image of the element $c\rhd a\in \mathscr F(C\otimes \mathscr A)$ in the quotient $C\rhd \mathscr A$ will still be denoted
 by $c\rhd a$. 
 
 \begin{lem}\label{L8.2} The canonical morphism
 \begin{equation}
 \phi(C,\mathscr A)=\phi:C\longrightarrow Hom_K(\mathscr A,C\rhd \mathscr A) \qquad c\mapsto (a\mapsto c\rhd a)
 \end{equation}
  is a measuring of $\mathcal O$-algebras.
 \end{lem}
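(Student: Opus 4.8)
The statement to prove is that $\phi = \phi(C,\mathscr A): C \longrightarrow Hom_K(\mathscr A, C\rhd\mathscr A)$, $c \mapsto (a \mapsto c\rhd a)$, is a measuring of $\mathcal O$-algebras. Recall that this means checking the two conditions of Definition \ref{Defmesopalg}: that $\phi(c)\big(\theta(a_1\otimes\cdots\otimes a_n)\big) = \sum \theta\big(\phi(c_1)(a_1)\otimes\cdots\otimes\phi(c_n)(a_n)\big)$ for all $\theta\in\mathcal O(n)$, $n\geq 1$, and that $\phi(c)\big(u_{\mathscr A}(\theta_0)\big) = \epsilon(c)\,u_{C\rhd\mathscr A}(\theta_0)$ for all $\theta_0\in\mathcal O(0)$. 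The key observation is that the ideal $\mathscr J$ in \eqref{cproda} was built precisely so that these identities become tautologies in the quotient $C\rhd\mathscr A = \mathscr F(C\otimes\mathscr A)/\mathscr J$.

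First I would unwind the definition of $\phi$: by construction $\phi(c)(a) = c\rhd a$, the image in $C\rhd\mathscr A$ of the generator $c\otimes a$ of the free algebra $\mathscr F(C\otimes\mathscr A)$. Since $\phi(c)$ must be $K$-linear in $c$ and in $a$, I would note that this is automatic because $C\otimes\mathscr A\longrightarrow\mathscr F(C\otimes\mathscr A)$ is $K$-linear and the quotient map is $K$-linear; in particular the assignment $c\mapsto\phi(c)$ is a well-defined $K$-linear map $C\to Hom_K(\mathscr A, C\rhd\mathscr A)$. Next, for the multiplicative condition, I would fix $\theta\in\mathcal O(n)$ and $a_1,\dots,a_n\in\mathscr A$ and compute both sides in $C\rhd\mathscr A$: the left side is $\phi(c)\big(\theta(a_1,\dots,a_n)\big) = c\rhd\theta(a_1,\dots,a_n)$, while the right side is $\sum\theta\big(\phi(c_1)(a_1),\dots,\phi(c_n)(a_n)\big) = \sum\theta(c_1\rhd a_1,\dots,c_n\rhd a_n)$, using the Sweedler notation $\Delta^{n-1}(c) = \sum c_1\otimes\cdots\otimes c_n$. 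The difference of these two elements is exactly the generator $c\rhd\theta(a_1,\dots,a_n) - \sum\theta(c_1\rhd a_1,\dots,c_n\rhd a_n)$ of $\mathscr J$, hence is zero in the quotient $C\rhd\mathscr A$. The unit condition is handled identically using the first family of generators $c\rhd(u_{\mathscr A}(\theta_0)) - \epsilon(c)\,u_{\mathscr F(C\otimes\mathscr A)}(\theta_0)$ in \eqref{cproda}, which maps to $c\rhd(u_{\mathscr A}(\theta_0)) - \epsilon(c)\,u_{C\rhd\mathscr A}(\theta_0)$ in the quotient and is therefore zero.

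There is essentially no serious obstacle here — the proof is a direct verification, and the only point requiring a sentence of care is the bookkeeping of iterated coproducts: the operation $\theta(\,\cdot\,)$ in $\mathscr F(C\otimes\mathscr A)$ on the right-hand side refers to the structure map $\alpha_{\mathscr F(C\otimes\mathscr A)}(n)$, and one should note that when this structure map is applied to elements $c_i\rhd a_i$ (images of generators) the result is well-defined in the quotient because $\mathscr J$ is an ideal in the sense of Definition \ref{D8.1}, so the induced $\mathcal O$-algebra structure on $C\rhd\mathscr A$ is the one for which the computation above makes sense. With that remark in place, the two required identities hold on the nose in $C\rhd\mathscr A$, which completes the proof.
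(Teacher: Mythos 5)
Your proposal is correct and follows essentially the same route as the paper's own proof: both arguments reduce the two measuring conditions to the observation that the relevant differences are exactly the generators of the ideal $\mathscr J$ in \eqref{cproda}, hence vanish in the quotient $C\rhd\mathscr A=\mathscr F(C\otimes\mathscr A)/\mathscr J$. Your extra remark that $\mathscr J$ being an ideal (Definition \ref{D8.1}) makes the induced $\mathcal O$-algebra structure on the quotient well defined is a harmless and accurate clarification, not a deviation.
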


\begin{proof} We consider the obvious map 
 \begin{equation}
\phi':C\longrightarrow Hom_K(\mathscr A,\mathscr F(C\otimes  \mathscr A)) \qquad c\mapsto (a\mapsto c\rhd a)
 \end{equation} Applying the quotient map $\mathscr F(C\otimes
 \mathscr A)\longrightarrow \mathscr F(C\otimes
 \mathscr A)/\mathscr J=C\rhd \mathscr A$, we obtain 
  \begin{equation}
 \phi(C,\mathscr A)=\phi:C\longrightarrow Hom_K(\mathscr A,C\rhd \mathscr A) \qquad c\mapsto (a\mapsto c\rhd a)
 \end{equation}
We now consider $a_1$,...,$a_n\in \mathscr A$ and $\theta\in \mathcal O(n)$, $\theta_0\in \mathcal O(0)$. For any $c\in C$, we have
\begin{equation*}
\begin{array}{c}
\phi'(c)(\theta(a_1,...,a_n))-\sum \theta(\phi'(c_1)(a_1),...,\phi'(c_n)(a_n))=c\rhd \theta(a_1,...,a_n) -  \sum \theta (c_1\rhd a_1,...,c_n\rhd a_n)\in 
\mathscr J\\
\phi'(c)(u_{\mathscr A}(\theta_0))-\epsilon(c)u_{\mathscr F(C\otimes \mathscr A)}(\theta_0)=c\rhd (u_{\mathscr A}(\theta_0))-\epsilon(c)u_{\mathscr F(C\otimes \mathscr A)}(\theta_0)
\in \mathscr J\\
\end{array}
\end{equation*} Since $C\rhd\mathscr A=\mathscr F(C\otimes
 \mathscr A)/\mathscr J$, the result is clear.
\end{proof}

\begin{Thm}\label{Th8.3} Let $C$ be a coalgebra and let $\mathscr A$ be an $\mathcal O$-algebra. Then, the measuring $\phi(C,\mathscr A):C\longrightarrow Hom_K(\mathscr A,C\rhd \mathscr A)$ has the following universal property: given any
measuring $\phi':C\longrightarrow Hom_K(\mathscr A,\mathscr B)$ of $\mathcal O$-algebras, there exists a unique morphism $f:C\rhd \mathscr A\longrightarrow \mathscr B$ 
of $\mathcal O$-algebras such that
$\phi'=Hom(\mathscr A,f)\circ \phi(C,\mathscr A)$.
\end{Thm}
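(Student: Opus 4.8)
The plan is to obtain $f$ from the universal property of the free $\mathcal O$-algebra and then quotient by $\mathscr J$. First I would package the measuring $\phi'$ into the $K$-linear map $C\otimes\mathscr A\longrightarrow\mathscr B$ sending $c\otimes a$ to $\phi'(c)(a)$; by the adjunction \eqref{fadjoint} this extends to a unique $\mathcal O$-algebra morphism $\tilde f\colon\mathscr F(C\otimes\mathscr A)\longrightarrow\mathscr B$ characterized by $\tilde f(c\rhd a)=\phi'(c)(a)$ for all $c\in C$, $a\in\mathscr A$.

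The next step is to show $\tilde f$ kills the generators of $\mathscr J$ from \eqref{cproda}. Since $\tilde f$ respects the $\mathcal O$-algebra structure,
\[
\tilde f\Big(\textstyle\sum\theta(c_1\rhd a_1,\dots,c_n\rhd a_n)\Big)=\sum\theta\big(\phi'(c_1)(a_1),\dots,\phi'(c_n)(a_n)\big),
\]
and this equals $\phi'(c)(\theta(a_1,\dots,a_n))=\tilde f(c\rhd\theta(a_1,\dots,a_n))$ precisely because $\phi'$ is a measuring in the sense of Definition \ref{Defmesopalg}; the same computation with $\theta_0\in\mathcal O(0)$ handles $c\rhd u_{\mathscr A}(\theta_0)-\epsilon(c)u_{\mathscr F(C\otimes\mathscr A)}(\theta_0)$, using $\tilde f\circ u_{\mathscr F(C\otimes\mathscr A)}=u_{\mathscr B}$. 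Now $\ker\tilde f$ is an ideal of $\mathscr F(C\otimes\mathscr A)$ in the sense of Definition \ref{D8.1}, since $\tilde f(\theta(b_1,\dots,b_{n-1},x))=\theta(\tilde f(b_1),\dots,\tilde f(b_{n-1}),0)=0$ whenever $\tilde f(x)=0$, by multilinearity of $\alpha_{\mathscr B}$. As $\mathscr J$ is the smallest ideal containing the elements of \eqref{cproda}, we get $\mathscr J\subseteq\ker\tilde f$, so $\tilde f$ descends to an $\mathcal O$-algebra morphism $f\colon C\rhd\mathscr A=\mathscr F(C\otimes\mathscr A)/\mathscr J\longrightarrow\mathscr B$ with $f(c\rhd a)=\phi'(c)(a)$; evaluating at any $c$ and $a$ then gives $\bigl(Hom(\mathscr A,f)\circ\phi(C,\mathscr A)\bigr)(c)(a)=f(c\rhd a)=\phi'(c)(a)$, i.e.\ $\phi'=Hom(\mathscr A,f)\circ\phi(C,\mathscr A)$. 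For uniqueness, the image of $C\otimes\mathscr A$ generates $\mathscr F(C\otimes\mathscr A)$ as an $\mathcal O$-algebra, hence the classes $c\rhd a$ generate $C\rhd\mathscr A$; any morphism $g$ with $\phi'=Hom(\mathscr A,g)\circ\phi(C,\mathscr A)$ satisfies $g(c\rhd a)=\phi'(c)(a)=f(c\rhd a)$, so $g=f$.

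I expect the only nontrivial point to be the passage of $\tilde f$ to the quotient: one must be sure that annihilating the explicit generators of \eqref{cproda} forces annihilation of all of $\mathscr J$, which reduces to the (easy) observation that the kernel of an $\mathcal O$-algebra morphism is an ideal in the sense of Definition \ref{D8.1}. Everything else is a direct unwinding of the definitions of measuring, free $\mathcal O$-algebra, and quotient by an ideal.
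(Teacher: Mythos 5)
Your proposal is correct and follows essentially the same route as the paper: extend $\phi'$ to $\mathscr F(C\otimes\mathscr A)$ via the free-algebra adjunction \eqref{fadjoint}, check the generators of $\mathscr J$ in \eqref{cproda} are annihilated using the measuring identities, descend to $C\rhd\mathscr A$, and prove uniqueness from the fact that the classes $c\rhd a$ generate. Your explicit remark that the kernel of an $\mathcal O$-algebra morphism is an ideal in the sense of Definition \ref{D8.1} (so that $\mathscr J\subseteq\ker\tilde f$) is a small point the paper leaves implicit, but the argument is otherwise the same.
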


\begin{proof} It is clear that the morphism $\phi': C\longrightarrow Hom_K(\mathscr A,\mathscr B)$ corresponds to a map $\psi:C\otimes \mathscr A\longrightarrow \mathscr B$
of vector spaces. From the property of the free algebra in \eqref{fadjoint}, this induces a unique morphism $g:\mathscr F(C\otimes\mathscr A)\longrightarrow \mathscr B$ of $\mathcal O$-algebras
satisfying $\psi=g\circ i$, where $i:C\otimes \mathscr A\longrightarrow \mathscr F(C\otimes \mathscr A)$ is the canonical morphism.

\smallskip
Considering elements in  $\mathscr J \subseteq \mathscr F(C\otimes \mathscr A)$ as in \eqref{cproda}, we now have
\begin{equation*}
\begin{array}{ll}
g(c\rhd (u_{\mathscr A}(\theta_0)))-g(\epsilon(c)u_{\mathscr F(C\otimes \mathscr A)}(\theta_0))&=\psi(c\otimes (u_{\mathscr A}(\theta_0)))-\epsilon(c)u_{\mathscr B}(\theta_0)\\
&=\phi'(c)(u_{\mathscr A}(\theta_0))-\epsilon(c)u_{\mathscr B}(\theta_0)\\
&= 0\\
&\\
 g(c\rhd \theta(a_1,...,a_n)) - \sum g(\theta (c_1\rhd a_1,...,c_n\rhd a_n))&=\psi(c\otimes \theta(a_1,...,a_n)) - \sum \theta (g(c_1\rhd a_1),...,g(c_n\rhd a_n))\\
 &=\psi(c\otimes \theta(a_1,...,a_n)) - \sum \theta (\psi(c_1\otimes a_1),...,\psi(c_n\otimes a_n))\\
 &=\phi'(c)(\theta(a_1,...,a_n))-\sum \theta(\phi'(c_1)(a_1),...,\phi'(c_n)(a_n)) \\
 &=0\\
\end{array}
\end{equation*} As such, the morphism $g:\mathscr F(C\otimes\mathscr A)\longrightarrow \mathscr B$ factors uniquely as $g=f\circ q$, where  
$f:C\rhd\mathscr A\longrightarrow \mathscr B$ is a morphism of $\mathcal O$-algebras and $q$ is the quotient map $q:\mathscr F(C\otimes\mathscr A)\longrightarrow C\rhd
\mathscr A$. 

\smallskip
  For $c\in C$ and $a\in \mathscr A$, we see that
\begin{equation}\label{eq8.9y}
f(\phi(C,\mathscr A)(c)(a))=f(q(c\rhd a))=g(c\rhd a)=g(i(c\otimes a))=\psi(c\otimes a)=\phi'(c)(a)
\end{equation} This shows that $\phi'=Hom(\mathscr A,f)\circ \phi(C,\mathscr A)$. 

\smallskip 
Finally, suppose that $f_1,f_2:C\rhd \mathscr A\longrightarrow \mathscr B$ are morphisms 
of $\mathcal O$-algebras such that
$\phi'=Hom(\mathscr A,f_1)\circ \phi(C,\mathscr A)=Hom(\mathscr A,f_2)\circ \phi(C,\mathscr A)$. Then, the reasoning in \eqref{eq8.9y} shows that
\begin{equation}
f_1(q(c\rhd a))=\phi'(c)(a)=f_2(q(c\rhd a))
\end{equation} In other words, the restrictions $(f_1q)|(C\otimes \mathscr A)$ and $(f_2q)|(C\otimes \mathscr A)$ coincide. The universal property
of $\mathscr F(C\otimes \mathscr A)$ now gives $f_1q=f_2q$. Since $q$ is an epimorphism, it now follows that $f_1=f_2$. This proves the result.

\end{proof}

The following fact is possibly well-known: if $C$ is a cocommutative coalgebra and $\mathscr A$ is an $\mathcal O$-algebra, the space $[C,\mathscr A]:=Hom(C,\mathscr A)$ of linear maps carries the structure of an
$\mathcal O$-algebra given by setting
\begin{equation}\label{convolax}
\theta(f_1,...,f_n)(c):=\theta(f_1(c_1),...,f_n(c_n))\qquad \theta_0(c)=\epsilon(c)u_{\mathscr A}(\theta_0)
\end{equation} for $f_1$,...,$f_n\in [C,\mathscr A]$, $\theta\in \mathcal O(n)$, $n\geq 1$ , $c\in C$ and $\theta_0\in \mathcal O(0)$. We conclude with the following adjunction result.

\begin{Thm}\label{Th8.4} Let $C$ be a cocommutative coalgebra. For $\mathcal O$-algebras $\mathscr A$, $\mathscr B$, there are natural isomorphisms
\begin{equation}
Alg_{\mathcal O}(C\rhd \mathscr A,\mathscr B)=Alg_{\mathcal O}(\mathscr A,[C,\mathscr B])
\end{equation}
\end{Thm}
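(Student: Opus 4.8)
The plan is to deduce the adjunction by splicing together Theorem~\ref{Th8.3} with a purely formal ``uncurrying'' identification. Write $Meas_C(\mathscr A,\mathscr B)$ for the set of coalgebra measurings of $\mathcal O$-algebras from $\mathscr A$ to $\mathscr B$. Theorem~\ref{Th8.3} already furnishes a bijection
\[
Alg_{\mathcal O}(C\rhd\mathscr A,\mathscr B)\;\xrightarrow{\;\cong\;}\;Meas_C(\mathscr A,\mathscr B),\qquad f\longmapsto Hom(\mathscr A,f)\circ\phi(C,\mathscr A),
\]
so it suffices to construct a natural bijection $Meas_C(\mathscr A,\mathscr B)\cong Alg_{\mathcal O}(\mathscr A,[C,\mathscr B])$ and to check that the resulting composite is natural in both $\mathscr A$ and $\mathscr B$.

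For the bridge bijection, given a measuring $\phi':C\longrightarrow Hom_K(\mathscr A,\mathscr B)$, define $\widehat{\phi'}:\mathscr A\longrightarrow [C,\mathscr B]=Hom_K(C,\mathscr B)$ by $\widehat{\phi'}(a)(c):=\phi'(c)(a)$. Using the convolution $\mathcal O$-algebra structure on $[C,\mathscr B]$ from \eqref{convolax}, for $\theta\in\mathcal O(n)$ and $a_1,\dots,a_n\in\mathscr A$ one has
\[
\widehat{\phi'}\big(\theta(a_1,\dots,a_n)\big)(c)=\phi'(c)\big(\theta(a_1,\dots,a_n)\big)=\sum\theta\big(\phi'(c_1)(a_1),\dots,\phi'(c_n)(a_n)\big)=\theta\big(\widehat{\phi'}(a_1),\dots,\widehat{\phi'}(a_n)\big)(c),
\]
and likewise $\widehat{\phi'}\big(u_{\mathscr A}(\theta_0)\big)(c)=\epsilon(c)u_{\mathscr B}(\theta_0)=\theta_0(c)$ for $\theta_0\in\mathcal O(0)$; hence $\widehat{\phi'}\in Alg_{\mathcal O}(\mathscr A,[C,\mathscr B])$. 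Conversely, given $g\in Alg_{\mathcal O}(\mathscr A,[C,\mathscr B])$, put $\check g(c)(a):=g(a)(c)$; reading the same identities backwards shows $\check g\in Meas_C(\mathscr A,\mathscr B)$. The two assignments are mutually inverse, which establishes $Meas_C(\mathscr A,\mathscr B)\cong Alg_{\mathcal O}(\mathscr A,[C,\mathscr B])$; tracing through Theorem~\ref{Th8.3} as well, an $\mathcal O$-algebra map $f:C\rhd\mathscr A\to\mathscr B$ corresponds to the map $\mathscr A\to[C,\mathscr B]$, $a\mapsto (c\mapsto f(c\rhd a))$.

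Naturality in $\mathscr B$ is immediate: pushing a measuring forward along $k:\mathscr B\to\mathscr B'$ corresponds on one side to postcomposition $Hom(\mathscr A,k)\circ(-)$ and on the other to postcomposition with $[C,k]$, and the bijection of Theorem~\ref{Th8.3} intertwines these by construction. For naturality in $\mathscr A$, observe first that $C\rhd -$ is a functor: for a morphism $h:\mathscr A'\to\mathscr A$, precomposing the measuring $\phi(C,\mathscr A)$ with $h$ (which is again a measuring, since $h$ respects the $\mathcal O$-algebra operations) yields a measuring from $\mathscr A'$ to $C\rhd\mathscr A$, hence by Theorem~\ref{Th8.3} a canonical morphism $C\rhd h:C\rhd\mathscr A'\to C\rhd\mathscr A$; unwinding the explicit formula above then shows that precomposition with $C\rhd h$ matches precomposition with $h$ under the bijection.

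The main obstacle is not a hard computation but a bookkeeping point that must be handled with care: one has to be sure the convolution $\mathcal O$-algebra structure on $[C,\mathscr B]$ of \eqref{convolax} is genuinely well defined, and this is precisely where cocommutativity of $C$ enters, since the equivariance axiom for the operadic action on $[C,\mathscr B]$ forces $\Delta^{n-1}(c)=\sum c_1\otimes\cdots\otimes c_n$ to be invariant under the $S_n$-action. With that in hand, the displayed identities above amount to the standard tensor--hom adjunction $Hom_K(C,Hom_K(\mathscr A,\mathscr B))\cong Hom_K(\mathscr A,Hom_K(C,\mathscr B))$ transporting the measuring condition into the $\mathcal O$-algebra-morphism condition, and all maps in sight are natural, so the two bijections assemble into the asserted natural isomorphism.
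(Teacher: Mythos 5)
Your proposal is correct, and it organizes the argument differently from the paper. The paper proves Theorem \ref{Th8.4} directly from the construction of $C\rhd\mathscr A$: it curries a map $f:C\rhd\mathscr A\to\mathscr B$ through $C\otimes\mathscr A\to\mathscr F(C\otimes\mathscr A)\to C\rhd\mathscr A$, and in the converse direction invokes the universal property of the free algebra $\mathscr F(C\otimes\mathscr A)$ and re-checks vanishing on the ideal $\mathscr J$ of \eqref{cproda} — in effect redoing the computations underlying Theorem \ref{Th8.3} rather than citing it. You instead factor the bijection as $Alg_{\mathcal O}(C\rhd\mathscr A,\mathscr B)\cong Meas_C(\mathscr A,\mathscr B)\cong Alg_{\mathcal O}(\mathscr A,[C,\mathscr B])$, using Theorem \ref{Th8.3} as a black box together with the elementary transposition $\phi'\leftrightarrow\widehat{\phi'}$, which converts the measuring identity into the $\mathcal O$-algebra-morphism identity for the convolution structure \eqref{convolax}; the key displayed computation is the same as the paper's, but it occurs only once and at the level of $[C,\mathscr B]$. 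What your route buys is modularity and a genuinely cleaner treatment of naturality (which the paper asserts but does not verify), including the functoriality of $C\rhd-$; what the paper's route buys is self-containedness and explicit formulas on generators $c\rhd a$ without invoking the earlier theorem. One small point you should make explicit for your first bijection to be well posed: for every $\mathcal O$-algebra map $f:C\rhd\mathscr A\to\mathscr B$, the composite $Hom(\mathscr A,f)\circ\phi(C,\mathscr A)$ is again a measuring; this is a one-line check (or an instance of Proposition \ref{Prp2.3}, since an $\mathcal O$-algebra morphism is the same as a $K$-measuring and $C\otimes K\cong C$). You are also right that cocommutativity of $C$ enters only through the well-definedness of \eqref{convolax}, exactly as in the paper, where the intermediate Theorem \ref{Th8.3} holds for arbitrary $C$.
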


\begin{proof}
We start with a morphism $f:C\rhd \mathscr A\longrightarrow \mathscr B$ of $\mathcal O$-algebras. Composing with the canonical morphism $C\otimes \mathscr A
\longrightarrow \mathscr F(C\otimes \mathscr A)\longrightarrow C\rhd\mathscr A$, we obtain a map $f':C\otimes \mathscr A\longrightarrow \mathscr B$ of vector spaces.
Then, $f'$ corresponds to a morphism $f'':\mathscr A\longrightarrow Hom(C,\mathscr B)=[C,\mathscr B]$ of vector spaces. For $\theta\in \mathcal O(n)$, $a_1$,...,$a_n\in A$ and $c\in C$, we now see that
\begin{equation}\label{adjone}
\begin{array}{ll}
f''(\theta(a_1,...,a_n))(c) &=f'(c\otimes \theta(a_1,...,a_n))\\&=f(c\rhd \theta(a_1,...,a_n)) \\&\\
\theta(f''(a_1),...,f''(a_n))(c)&=\sum\theta(f''(a_1)(c_1),...,f''(a_n)(c_n))\\ &=\sum\theta(f'(c_1\otimes a_1),...,f'(c_n\otimes a_n))\\
&= \sum\theta(f(c_1\rhd a_1),...,f(c_n\rhd a_n))\\
&=f(\sum \theta (c_1\rhd a_1,...,c_n\rhd a_n)) \\
\end{array}
\end{equation} Since $c\rhd \theta(a_1,...,a_n)=\sum \theta (c_1\rhd a_1,...,c_n\rhd a_n)$ in $C\rhd\mathscr A$, it is clear from \eqref{adjone} that $f''$ is a morphism
of $\mathcal O$-algebras.

\smallskip
Conversely, suppose that we have a morphism $g:\mathscr A\longrightarrow [C,\mathscr B]$ of $\mathcal O$-algebras. This induces a morphism $g':C\otimes \mathscr A
\longrightarrow \mathscr B$ of vector spaces. The universal property of the free $\mathcal O$-algebra $\mathscr F(C\otimes \mathscr A)$ now leads to a morphism 
$g'':\mathscr F(C\otimes \mathscr A)\longrightarrow \mathscr B$ of $\mathcal O$-algebras such that $g''\circ i=g'$, where $i:C\otimes \mathscr A\longrightarrow \mathscr F(C\otimes \mathscr A)$
is the canonical morphism.

\smallskip
Considering elements in  $\mathscr J \subseteq \mathscr F(C\otimes \mathscr A)$ as in \eqref{cproda}, we now have
\begin{equation*}
\begin{array}{ll}
g''(c\rhd (u_{\mathscr A}(\theta_0)))-g''(\epsilon(c)u_{\mathscr F(C\otimes \mathscr A)}(\theta_0))&=(g''\circ i)(c\otimes (u_{\mathscr A}(\theta_0)))-\epsilon(c)u_{\mathscr B}(\theta_0)\\
&=g'(c\otimes (u_{\mathscr A}(\theta_0)))-\epsilon(c)u_{\mathscr B}(\theta_0)\\
&= g(u_{\mathscr A}(\theta_0))(c)-\epsilon(c)u_{\mathscr B}(\theta_0)\\
&= (u_{[C,\mathscr B]}(\theta_0))(c)-\epsilon(c)u_{\mathscr B}(\theta_0)\\
&= 0\\
&\\
 g''(c\rhd \theta(a_1,...,a_n)) - \sum g''(\theta (c_1\rhd a_1,...,c_n\rhd a_n))&=g'(c\otimes \theta(a_1,...,a_n)) - \sum \theta (g''(c_1\rhd a_1),...,g''(c_n\rhd a_n))\\
 &= g(\theta(a_1,...,a_n))(c) - \sum \theta (g'(c_1\otimes  a_1),...,g'(c_n\otimes a_n))\\
&= \theta(g(a_1),...,g(a_n))(c) - \sum \theta ( g(a_1)(c_1),..., g(a_n)(c_n))\\
 &=0\\
\end{array}
\end{equation*} It follows that $g'':\mathscr F(C\otimes \mathscr A)\longrightarrow \mathscr B$ descends to a morphism $g''':C\rhd \mathscr A\longrightarrow \mathscr B$
of $\mathcal O$-algebras. It may be easily verified that the two above associations are inverse to each other. This proves the result.
\end{proof}

\small

\begin{bibdiv}
	\begin{biblist}
	
	\bib{AJ}{article}{
   author={Anel, M.},
   author={Joyal, A.}
   title={Sweedler Theory for (co)algebras and the bar-cobar constructions
},
   journal={arXiv:1309.6952 [math.CT]},
  
}

\bib{AJo}{article}{
   author={Anel, M.},
   title={Enriching algebras over coalgebras
and operads over cooperads
},
   journal={Slides of talk available at http://mathieu.anel.free.fr/mat/doc/Anel-2014-SweedlerHopfOperads.pdf},
  
}

\bib{Barr}{article}{
   author={Barr, M.},
   title={Coalgebras over a commutative ring},
   journal={J. Algebra},
   volume={32},
   date={1974},
   number={3},
   pages={600--610},
}

\bib{Bat}{article}{
   author={Batchelor, M.},
   title={Measuring comodules---their applications},
   journal={J. Geom. Phys.},
   volume={36},
   date={2000},
   number={3-4},
   pages={251--269},
}

\bib{Baues}{article}{
   author={Baues, Hans-Joachim},
   author={Minian, Elias Gabriel},
   author={Richter, Birgit},
   title={Crossed modules over operads and operadic cohomology},
   journal={$K$-Theory},
   volume={31},
   date={2004},
   number={1},
   pages={39--69},
}

\bib{Brz}{article}{
   author={Brzezi\'{n}ski, T.},
   title={On modules associated to coalgebra Galois extensions},
   journal={J. Algebra},
   volume={215},
   date={1999},
   number={1},
   pages={290--317},
}

\bib{Borc}{book}{
   author={Borceux, F.},
   title={Handbook of categorical algebra. 2},
   series={Encyclopedia of Mathematics and its Applications},
   volume={51},
   note={Categories and structures},
   publisher={Cambridge University Press, Cambridge},
   date={1994},
   pages={xviii+443},
}

\bib{Fresse}{article}{
   author={Fresse, B.},
   title={Cogroups in algebras over an operad are free algebras},
   journal={Comment. Math. Helv.},
   volume={73},
   date={1998},
   number={4},
   pages={637--676},
}

\bib{Fress}{book}{
   author={Fresse, B.},
   title={Homotopy of operads and Grothendieck-Teichm\"{u}ller groups. Part 1},
   series={Mathematical Surveys and Monographs},
   volume={217},
   note={The algebraic theory and its topological background},
   publisher={American Mathematical Society, Providence, RI},
   date={2017},

}

\bib{Gerst}{article}{
   author={Gerstenhaber, M.},
   title={The cohomology structure of an associative ring},
   journal={Ann. of Math. (2)},
   volume={78},
   date={1963},
   pages={267--288},
}

\bib{GK}{article}{
   author={Ginzburg, V.},
   author={Kapranov, M.},
   title={Koszul duality for operads},
   journal={Duke Math. J.},
   volume={76},
   date={1994},
   number={1},
   pages={203--272},
}

\bib{GrM1}{article}{
   author={Grunenfelder, L.},
   author={Mastnak, M.},
   title={On bimeasurings},
   journal={J. Pure Appl. Algebra},
   volume={204},
   date={2006},
   number={2},
   pages={258--269},
}

\bib{GrM2}{article}{
   author={Grunenfelder, L.},
   author={Mastnak, M.},
   title={On bimeasurings. II},
   journal={J. Pure Appl. Algebra},
   volume={209},
   date={2007},
   number={3},
   pages={823--832},
}

\bib{Her}{article}{
   author={Hermann, R.},
   title={Monoidal categories and the Gerstenhaber bracket in Hochschild
   cohomology},
   journal={Mem. Amer. Math. Soc.},
   volume={243},
   date={2016},
   number={1151},
   pages={v+146},
}

\bib{HS93}{article}{
   author={Hinich, V.},
   author={Schechtman, V.},
   title={Homotopy Lie algebras},
   conference={
      title={I. M. Gel\cprime fand Seminar},
   },
   book={
      series={Adv. Soviet Math.},
      volume={16},
      publisher={Amer. Math. Soc., Providence, RI},
   },
   date={1993},
   pages={1--28},
   review={\MR{1237833}},
}

\bib{Dale}{book}{
   author={Husemoller, D.},
   title={Lectures on cyclic homology},
   series={Tata Institute of Fundamental Research Lectures on Mathematics
   and Physics},
   volume={83},
   note={Lecture notes prepared by R. Sujatha},
   publisher={Published for the Tata Institute of Fundamental Research,
   Bombay; by Springer-Verlag, Berlin},
   date={1991},
}

\bib{Vas1}{article}{
   author={Hyland, M.},
   author={L\'{o}pez Franco, I.},
   author={Vasilakopoulou, C.},
   title={Hopf measuring comonoids and enrichment},
   journal={Proc. Lond. Math. Soc. (3)},
   volume={115},
   date={2017},
   number={5},
   pages={1118--1148},
}

\bib{Vas1.5}{article}{
   author={Hyland, M.},
   author={L\'{o}pez Franco, I.},
   author={Vasilakopoulou, C.},
   title={Measuring Comodules and Enrichment},
   journal={	arXiv:1703.10137 [math.CT]},
}

\bib{KaSc}{book}{
   author={Kashiwara, M.},
   author={Schapira, P.},
   title={Categories and sheaves},
   series={Grundlehren der Mathematischen Wissenschaften [Fundamental
   Principles of Mathematical Sciences]},
   volume={332},
   publisher={Springer-Verlag, Berlin},
   date={2006},
   pages={x+497},
   isbn={978-3-540-27949-5},
   isbn={3-540-27949-0},
   review={\MR{2182076}},
   doi={10.1007/3-540-27950-4},
}

\bib{GMK}{book}{
   author={Kelly, G. M.},
   title={Basic concepts of enriched category theory},
   series={London Mathematical Society Lecture Note Series},
   volume={64},
   publisher={Cambridge University Press, Cambridge-New York},
   date={1982},
   pages={245},
}

\bib{Liv}{article}{
   author={Livernet, M.},
   title={On a plus-construction for algebras over an operad},
   journal={$K$-Theory},
   volume={18},
   date={1999},
   number={4},
   pages={317--337},
}

\bib{LQ}{article}{
   author={Loday, Jean-Louis},
   author={Quillen, Daniel},
   title={Cyclic homology and the Lie algebra homology of matrices},
   journal={Comment. Math. Helv.},
   volume={59},
   date={1984},
   number={4},
   pages={569--591},

}

\bib{Loday}{book}{
   author={Loday, J. L.},
   title={Cyclic Homology},
   series={Grundlehren der Mathematischen
Wissenschaften Series},
   volume={301},
   publisher={Springer, Berlin},
   date={1992},
}

\bib{LaMa}{article}{
   author={Lauve, A.},
   author={Mastnak, M.},
   title={Bialgebra coverings and transfer of structure},
   conference={
      title={Tensor categories and Hopf algebras},
   },
   book={
      series={Contemp. Math.},
      volume={728},
      publisher={Amer. Math. Soc., Providence, RI},
   },
   date={2019},
   pages={137--153},
}

\bib{Lebr}{article}{
   author={Le Bruyn, L.},
   title={Universal bialgebras associated with orders},
   journal={Comm. Algebra},
   volume={10},
   date={1982},
   number={5},
   pages={457--478},
}

\bib{LoVa}{book}{
   author={Loday, J.-L},
   author={Vallette, B.},
   title={Algebraic operads},
   series={Grundlehren der Mathematischen Wissenschaften [Fundamental
   Principles of Mathematical Sciences]},
   volume={346},
   publisher={Springer, Heidelberg},
   date={2012},
}

\bib{Markl}{article}{
   author={Markl, M.},
   title={Operads and PROPs},
   conference={
      title={Handbook of algebra. Vol. 5},
   },
   book={
      series={Handb. Algebr.},
      volume={5},
      publisher={Elsevier/North-Holland, Amsterdam},
   },
   date={2008},
   pages={87--140},
}

\bib{PS}{article}{
   author={Porst, H. E.},
   author={Street, R.},
   title={Generalizations of the Sweedler dual},
   journal={Appl. Categ. Structures},
   volume={24},
   date={2016},
   number={5},
   pages={619--647},
}

\bib{Swd}{book}{
   author={Sweedler, M. E.},
   title={Hopf algebras},
   series={Mathematics Lecture Note Series},
   publisher={W. A. Benjamin, Inc., New York},
   date={1969},
   pages={vii+336},
}

\bib{Take}{article}{
   author={Takeuchi, M.},
   title={Topological coalgebras},
   journal={J. Algebra},
   volume={97},
   date={1985},
   number={2},
   pages={505--539},
}

\bib{T}{article}{
   author={Tsygan, B. L.},
   title={Homology of matrix Lie algebras over rings and the Hochschild
   homology},
   language={Russian},
   journal={Uspekhi Mat. Nauk},
   volume={38},
   date={1983},
   number={2(230)},
   pages={217--218},
 
}

\bib{Val}{article}{
   author={Vallette, B.},
   title={Homology of generalized partition posets},
   journal={J. Pure Appl. Algebra},
   volume={208},
   date={2007},
   number={2},
   pages={699--725},

}

\bib{Vas2}{article}{
   author={Vasilakopoulou, C.},
   title={On enriched fibrations},
   journal={Cah. Topol. G\'{e}om. Diff\'{e}r. Cat\'{e}g.},
   volume={59},
   date={2018},
   number={4},
   pages={354--387},
}

\bib{Vas3}{article}{
   author={Vasilakopoulou, C.},
   title={Enriched duality in double categories: $\mathcal{V}$-categories and
   $\mathcal{V}$-cocategories},
   journal={J. Pure Appl. Algebra},
   volume={223},
   date={2019},
   number={7},
   pages={2889--2947},
}

\bib{Wis}{article}{
   author={Wisbauer, R.},
   title={On the category of comodules over corings},
   conference={
      title={Mathematics \& mathematics education},
      address={Bethlehem},
      date={2000},
   },
   book={
      publisher={World Sci. Publ., River Edge, NJ},
   },
   date={2002},
   pages={325--336},
}

\end{biblist}
\end{bibdiv}

\end{document}